\documentclass[a4paper,11pt]{article}


\usepackage{dsfont}
\usepackage[pdftex]{hyperref}
\usepackage{subcaption}
\usepackage{stmaryrd}
\usepackage[normalem]{ulem}
\usepackage{graphicx,xcolor}
\usepackage{amsfonts}
\usepackage{amsmath,amssymb,mathrsfs,amsthm}
\usepackage[all]{xy} 
\usepackage{color}
\usepackage{times}
\usepackage{enumerate,vmargin}
\usepackage{verbatim}
\usepackage{paralist}
\usepackage[labelfont=bf,size=small,font=it]{caption}
\usepackage[numbers,sort&compress]{natbib}
\usepackage{tikz}


\setpapersize{A4}
\setmarginsrb{2.5cm}{2cm}{2.5cm}{2cm}{.4cm}{4mm}{0cm}{7.5mm}

\hypersetup{
    unicode      = false,     
    pdftoolbar   = true,      
    pdfmenubar   = true,      
    pdffitwindow = true,      
    pdfnewwindow = true,      
    colorlinks   = true,      
    linkcolor    = blue,      
    citecolor    = blue,      
    filecolor    = blue,      
    urlcolor     = blue       
}

\theoremstyle{plain}
\newtheorem{The}{Theorem}
\newtheorem{Def}{Definition}

\newtheorem{Prop}[The]{Proposition}
\newtheorem{Lem}[The]{Lemma}

\newtheorem{Alg}{Algorithm}

\theoremstyle{remark}
\newtheorem{Req}{Remark}


\newcommand{\R}{\mathbb R}
\newcommand{\N}{\mathbb N}
\newcommand{\Z}{\mathbb Z}

\newcommand{\cO}{\mathcal O}
\newcommand{\cI}{\mathcal I}
\newcommand{\cJ}{\mathcal J}

\newcommand{\cN}{\mathcal N}
\newcommand{\cB}{\operatorname{Bin}}
\newcommand{\cG}{\mathcal G}
\newcommand{\cK}{\mathcal K}
\newcommand{\cF}{\mathcal F}
\newcommand{\cD}{\mathcal D}
\newcommand{\cH}{\mathcal H}

\newcommand{\cA}{\mathcal A}
\newcommand{\cX}{\mathcal X}

\newcommand{\Var}{\operatorname{Var}}

\newcommand{\vep}{\varepsilon}


\definecolor{green}{RGB}{0, 128, 128}
\definecolor{darkred}{RGB}{139,0,0}


\linespread{1.05}


\title{\textsc{Colour ratio in Prim's ranking of bipartite graphs}
\thanks{Supported by the EPSRC EP/W033585/1 grant}
}

\author{F\'elix \textsc{Kahane}
\thanks{Department of Mathematics, University of Sussex, 
Falmer campus, Brighton, BN1 9QH, England, United Kingdom.   
Email: F.Kahane@sussex.ac.uk} \ and\  
Minmin \textsc{Wang}
\thanks{Department of Mathematics, University of Sussex, 
Falmer campus, Brighton, BN1 9QH, England, United Kingdom.  
Email: Minmin.Wang@sussex.ac.uk}}

\date{\today}

\begin{document}

\maketitle
\begin{abstract}
We consider a complete bipartite graph of size $n$ endowed with i.i.d.~uniform edge weights and run Prim's Algorithm to obtain a ranking of its vertices. Let $\rho^{(n)}_k$ be the proportion of black vertices among the first $k$ vertices in this ranking. We characterise the limit behaviour of $\rho^{(n)}_k$ as both $n$ and $k$ tend to infinity. Our results show that in general the limit of $\rho^{(n)}_k$, when existing, differs from the overall proportion of the black vertices in the graph. 

\smallskip
\noindent 
{\bf AMS 2010 subject classifications}: Primary 60C05, 05C80. Secondary 60J80, 60F15, 60G99.

\smallskip

\noindent   
{\bf Keywords}: {\it Prim's Algorithm, invasion percolation, bipartite graph, colour ratio}
\end{abstract}


\section{Introduction and main results}

This work concerns minimum spanning tree. 
Let $G=(V, E)$ be a connected graph and suppose that each edge $e\in E$ is associated with an {\it edge weight} $u_e\in (0, \infty)$. We will call $(G, (u_e)_{e \in E})$ a ~{\it weighted graph}. For a subgraph $G'$ of $G$, the {\it weight of $G'$} is the sum of the edge weights over all the edges contained in $G'$.
A {\it minimum spanning tree} (MST) of the weighted graph $(G, (u_e)_{e\in E})$ is a subgraph $T$ of $G$ that has the minimal weight among all the connected subgraphs of $G$ on the vertex set $V$. It is not difficult to see that $T$ is necessarily a tree, i.e.~cycle-free. If the edge weights are all distinct, then there is a unique minimum spanning tree.

When $G$ is $K_n$, the complete graph of $n$ vertices,  and the edge weights are i.i.d.~random variables, the corresponding minimum spanning tree has been extensively studied in the literature; see~\cite{Frieze1985, Janson1995, addario-berry_scaling_2017} to name only a few. 
Related models are also abundant, including MST of regular graphs~\cite{BeFrMc1998}, of the hypercubes~\cite{Penrose1998}, lattices~\cite{GaPeSc2018}, as well as the Euclidean MST~\cite{BeHaHa1959} among others.   

We are interested in the minimum spanning tree of a complete bipartite graph. 
We write $|A|$ for the cardinality of a finite set $A$. Let $n_w, n_b$ be two natural numbers and $n= n_b+n_w$. From now on, we take $G$ to be the complete bipartite graph $K_{n_b,n_w}$ on the vertex set $V_n=V^b_{n}\cup V^w_{n}$ with $|V^b_{n}|=n_b$ and $|V^w_{n}|=n_w$. 
In the sequel, we will refer to vertices from $V^b_{n}$ (resp.~vertices from $V^w_{n}$) as {\it black vertices} (resp.~{\it white vertices}).
We will be interested in the case where the size of the graph tends to infinity with a fixed colour ratio. More specifically, we assume that 
\begin{equation}
\label{hyp: theta}
\exists\,\theta\in (0, 1): \quad \frac{n_b}{n}\xrightarrow{n\to\infty} \theta.
\end{equation}

Denote by $E_{n}=\{e=\{v, w\}, v\in V^b_n, w\in V^w_n\}$ the edge set of $K_{n_b, n_w}$ and let $\{U_e: e\in E_{n}\}$ be a collection of i.i.d~random variables with a common uniform distribution on $(0, 1)$. We observe that with probability 1, $U_e$'s are all distinct. The (a.s.~unique) minimum spanning tree of $(K_{n_b, n_w}, \{U_e: e\in E_{n}\})$ can be identified using Prim's Algorithm as follows. 

\begin{Alg}[Prim's Algorithm]
\label{algo: Prim}
    Given $K_{n_b, n_w}=(V_n, E_n)$ and the edge weights $\{U_e: e\in E_{n}\}$, define a sequence of vertices $(\sigma(k))_{1\le k\le n}$ and a sequence of edges $(e_k)_{1\le k\le n-1}$ as follows.

    \noindent
    {\bf Step 1. } Let $\sigma(1)$ be a uniform element of $V_n$.

    \noindent
    {\bf Step $\boldsymbol{k\ge 2}$. } Given $\Sigma(k-1):=\{\sigma(i): 1\le i\le k-1\}$ and $\{e_{i}: 1\le i\le k-2\}$, let $e_{k-1}$ be the a.s.~unique edge that satisfies
    \[
        U_{e_{k-1}} = \min\big\{U_e: e=\{v, v'\}, v\in \Sigma(k-1), v'\in V_n\setminus \Sigma(k-1)\big\}.
    \]
    Set $\sigma(k)$ to be the endpoint of $e_{k-1}$ that does not belong to $\Sigma(k-1)$.

    \noindent
    {\bf Stop at step $\boldsymbol{n}$} and return the graph $T_n$ on the edge set $\{e_k: 1\le k\le n-1\}$.
\end{Alg}

For $1\le k\le n$, denote by $T_k$ the graph with the vertex set $\Sigma(k)$ and edge set $\{e_i: 1\le i\le k-1\}$. It is not difficult to see that $T_k$ is a connected graph with $k$ vertices and $k-1$ edges, i.e.~a tree. Algorithm~\ref{algo: Prim} can be described as creating a subgraph $T_{k+1}$ from $T_k$ by adding a new vertex with the minimum increase of weights, for $1\le k<n$. 

It is known that $T_n$ is the minimum spanning tree of $(K_{n_b, n_w}, \{U_e: e\in E_n\})$ (\cite{Prim}). Furthermore, Algorithm~\ref{algo: Prim} also produces an ordered sequence $(\sigma(k))_{k\in [n]}$ from the $n$ vertices of $K_{n_b, n_w}$, which we will refer to as the {\it Prim sequence} of the vertices of $K_{n_b, n_w}$, as well as a sequence of $n-1$ edges $(e_k)_{k\in[n-1]}$, called {\em Prim edges} in the sequel. In the same terminology, $k$ will be referred to as {\em Prim's rank} or simply the {\em rank} of $\sigma(k)$. 

For $k\in [n]$, we denote 
\begin{equation}
\label{def: Sigma-set}
\Sigma^b(k) = \Sigma(k) \cap V^b_n \quad\text{and}\quad\Sigma^w(k) = \Sigma(k) \cap V^w_n,
\end{equation}
the respective subsets of black and white vertices of $T_k$.  Our main object of interest is the following random variable that represents the proportion of black vertices in $T_k$:

\begin{equation}
    \label{def: ratio}
    \rho^{(n)}_k := \frac{|\Sigma^b(k)|}{k}. 
\end{equation}
Clearly, $\rho^{(n)}_n$ is simply the overall proportion of black vertices. 
The aim of this work is to investigate the asymptotic behaviors of $\rho^{(n)}_k$ as both $n$ and $k$ grow to infinity, possibly at different speeds. Recall from~\eqref{hyp: theta} that the overall proportion of black vertices converges to $\theta$. 
However, our main results below show that unless $\theta=1/2$, i.e.~asymptotic parity for black vs white vertex numbers, we will observe a different proportion than $\theta$ along the Prim sequence. More specifically, we distinguish two regimes: the {\it sublinear regime} where $k\to \infty$ but $k=o(n)$ as $n\to\infty$, and the {\it linear regime} where $k$ grows linearly with $n$.

\begin{The}[Sublinear regime]

\label{thm: sublinear}
Assume that~\eqref{hyp: theta} holds.
Denote $\gamma_\theta = \sqrt{\frac{1-\theta}{\theta}}$. 
Let $(\kappa_n)_{n\ge 1}$ be a sequence of positive integers that tends to infinity and satisfies $\kappa_n/n\to 0$ as $n\to\infty$. Then
    \begin{equation}
        \label{eq: thm1}
        \rho^{(n)}_{\kappa_n} \xrightarrow{n\to\infty} \frac{1}{1+\gamma_\theta}=\frac{\sqrt\theta}{\sqrt\theta+\sqrt{1-\theta}} \quad\text{in probability}. 
    \end{equation}
\end{The}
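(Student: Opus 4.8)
The plan is to track the two-type counting process $a_k := |\Sigma^b(k)|$ and $b_k := |\Sigma^w(k)|$, so that $a_k+b_k=k$ and $\rho^{(n)}_k = a_k/k$, and to reduce the statement to a one-dimensional stochastic recursion for $\rho^{(n)}_k$. Writing $\cF_k$ for the information revealed by Prim's Algorithm up to step $k$ and $p_k := \mathbb P(\sigma(k+1)\in V^b_n \mid \cF_k)$ for the probability that the next vertex is black, one has the exact identity $\rho^{(n)}_{k+1}-\rho^{(n)}_k = \frac{1}{k+1}\big(\mathbf 1_{\{\sigma(k+1)\in V^b_n\}}-\rho^{(n)}_k\big)$, whence $\mathbb E[\rho^{(n)}_{k+1}-\rho^{(n)}_k\mid\cF_k]=\frac{1}{k+1}(p_k-\rho^{(n)}_k)$. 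Everything then hinges on identifying the asymptotics of the colour probability $p_k$.

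First I would compute $p_k$. Since the graph is bipartite, the boundary edges split into $a_k(n_w-b_k)$ \emph{white-adding} edges (from black tree-vertices to outside white vertices) and $b_k(n_b-a_k)$ \emph{black-adding} edges, and the next vertex is black precisely when the smallest of the recorded distances $d(v)=\min\{U_{\{u,v\}}:u\in\Sigma(k)\}$ over outside vertices $v$ is attained at a black one. The essential—and delicate—point is that none of these boundary edges is ``fresh'': each was examined at the step when its tree-endpoint entered $\Sigma$, and the outside vertices carry the conditioning of never having been selected. I would handle this by tracking, for each colour, the intensity of the distances $d(v)$ near $0$ and running a flux/quasi-stationarity argument: each time a black vertex is added, every outside white vertex receives an independent fresh $\mathrm{Unif}(0,1)$ edge (effectively $\mathrm{Exp}(1)$ on the relevant $O(1/n)$ scale) and updates its distance by a minimum, injecting a flat density $\approx (n_w-b_k)$ of low white distances; accumulating over the $\approx a_k$ black additions builds a lower-tail intensity $\lambda_k^w\approx a_k n_w$, while selection only consumes mass at the very front and is lower-order. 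Symmetrically $\lambda_k^b\approx b_k n_b$, so the two competing minima are asymptotically independent exponentials with these rates and
\[
p_k=\frac{\lambda_k^b}{\lambda_k^w+\lambda_k^b}\,(1+o(1))=\frac{n_b\,b_k}{n_w\,a_k+n_b\,b_k}\,(1+o(1)).
\]
The hard part of the whole proof is exactly this step: showing that the conditioning and the non-freshness of the revealed edges perturb the naive ``proportional to the number of boundary edges'' formula only at lower order once $a_k,b_k\to\infty$. A single step is genuinely biased—a direct computation at $(a_k,b_k)=(1,1)$ already gives $p\to 1-(1-\theta)^2\neq\theta$—so the effect must be shown to wash out in the bulk.

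With this in hand the drift becomes explicit. The hypothesis $\kappa_n=o(n)$ forces $a_k,b_k\le k=o(n)$ throughout, so that $n_b-a_k\sim n_b$ and $n_w-b_k\sim n_w$ (this is precisely what distinguishes the sublinear regime and prevents the depletion term from shifting the drift), and with $n_b/n\to\theta$ one gets $p_k=\pi(\rho^{(n)}_k)+o(1)$ where $\pi(\rho)=\frac{\theta(1-\rho)}{(1-\theta)\rho+\theta(1-\rho)}$. The fixed-point equation $\pi(\rho)=\rho$ reduces to $\theta(1-\rho)^2=(1-\theta)\rho^2$, i.e. $\rho/(1-\rho)=\sqrt{\theta/(1-\theta)}=1/\gamma_\theta$, giving the unique root $\rho^*=1/(1+\gamma_\theta)$ claimed in the statement.

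Finally I would prove concentration at $\rho^*$ by a Lyapunov argument rather than a black-box stochastic approximation theorem. Setting $V(\rho)=(\rho-\rho^*)^2$ and $h=\pi-\mathrm{id}$, note that $h'=\pi'-1<0$ on $[0,1]$, so $h$ is strictly decreasing with single zero $\rho^*$ and $(\rho-\rho^*)h(\rho)\le -c\,V(\rho)$ for some $c>0$. Combining the drift formula, the bound $|\rho^{(n)}_{k+1}-\rho^{(n)}_k|\le 1/(k+1)$, and Young's inequality on the $o(1)$ error yields
\[
\mathbb E\big[V(\rho^{(n)}_{k+1})\mid\cF_k\big]\le \Big(1-\tfrac{c}{k+1}\Big)V(\rho^{(n)}_k)+\frac{\varepsilon_k}{k+1}+\frac{C}{(k+1)^2},
\]
with $\varepsilon_k\to0$ uniformly; a Gronwall-type iteration from a fixed starting index then gives $\mathbb E[V(\rho^{(n)}_{\kappa_n})]\to0$ as $n\to\infty$, hence $\rho^{(n)}_{\kappa_n}\to\rho^*$ in probability. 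The remaining technical care is to make $\varepsilon_k$ uniform in both $k$ and $n$ and to control the initial transient at small $k$, where $p_k$ is not yet close to $\pi(\rho^{(n)}_k)$.
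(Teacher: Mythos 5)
Your scheme identifies the correct fixed point (the equation $\theta(1-\rho)^2=(1-\theta)\rho^2$ does give $\rho^*=1/(1+\gamma_\theta)$, matching the theorem), but the proposal has a genuine gap at exactly the step you flag as hard: the per-step drift asymptotics $p_k=\pi(\rho^{(n)}_k)+o(1)$ is never proved, and as a pointwise-in-$k$ statement it is false, so the Lyapunov iteration built on it cannot be run. The reason is quantitative. In the sublinear window the accepted weights $U_{e_i}$ live at the critical scale $p_c=1/\sqrt{n_bn_w}$, and conditionally on $\cF_k$ each old boundary edge is uniform on $(m_e,1)$ with threshold $m_e$ of order $p_c$, while the $O(n)$ fresh edges emanating from the most recently added vertices have $m_e=0$. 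A single fresh edge undercuts the running record with probability about $n_bp_c=\gamma_\theta^{-1}$ (resp.\ $n_wp_c=\gamma_\theta$), a $\Theta(1)$ quantity; so a $\Theta(1)$ share of the selection probability is carried by the $O(n)$ fresh and near-record edges rather than being spread uniformly over the $\Theta(kn)$ boundary edges. In other words, ``selection only consumes mass at the very front and is lower-order'' fails at precisely the relevant scale: the conditioning removes mass exactly in the $O(p_c)$ window where the two competing minima live, and the two minima are not asymptotically independent exponentials with rates $a_kn_w$ and $b_kn_b$. Consequently $p_k$ is a $\Theta(1)$-fluctuating functional of the local front (the discovered-but-unranked vertices and the fresh-edge counts of recent vertices): for instance, at a step where the current percolation component has just been exhausted, the next vertex is a fresh lead, essentially uniform, so $p_k\approx\theta\ne\rho^*$; mid-component, $p_k$ depends on the Poisson-like children counts at the exploration front. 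Your own computation at $(a_k,b_k)=(1,1)$ is therefore not a transient that washes out in the bulk --- the same freshness bias recurs at every step, and $p_k$ only approaches $\pi(\rho^{(n)}_k)$ after averaging over the lifetime of whole components.

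This is fatal to the concluding argument as written, because the recursion $\mathbb E[V(\rho^{(n)}_{k+1})\mid\cF_k]\le(1-\tfrac{c}{k+1})V(\rho^{(n)}_k)+\tfrac{\varepsilon_k}{k+1}+\tfrac{C}{(k+1)^2}$ requires $|p_k-\pi(\rho^{(n)}_k)|\le\varepsilon_k$ conditionally a.s., not merely in a Ces\`aro sense along the trajectory. Proving the averaged version amounts to analysing the critical two-type exploration itself, which is what the paper does: it couples Prim's order with bond percolation at $p=1/\sqrt{n_bn_w}$, uses the interval representation of components (Proposition~\ref{prop: prim}) and the 2-neighbourhood exploration of Algorithm~\ref{algo: 2-explore}, whose increments are honest binomials on \emph{fresh} edges (Proposition~\ref{prop: incre}), derives the laws of large numbers $S^w_k=\gamma_\theta k+o_p(\kappa_n)$ and $S^b_k=k+o_p(\kappa_n)$, and controls the correction terms (lead and root vertices, the sets $\cA^w_k$, $\cA^b_k$) together with a colour-swap duality. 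Your fixed point agrees with the truth because the left-eigenvector ratio $\gamma_\theta$ of the critical two-type branching process with Poisson$(\gamma_\theta)$ and Poisson$(\gamma_\theta^{-1})$ offspring happens to coincide with the equilibrium of your naive flux balance; but the route through a per-step stochastic approximation with uniform error cannot be completed without replacing its central estimate by a component-block analysis of the kind the paper carries out.
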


\begin{The}[Linear regime]
\label{thm: linear}
Assume that~\eqref{hyp: theta} holds.
Let $\alpha_\theta = (\theta (1-\theta))^{-1/2}$.  For each $\lambda>1$, let $(\ell(\lambda), \rho(\lambda))$ be the unique solution in $(0, 1)\times (0, 1)$ of the following equation:
\begin{equation}
\label{eq: fixpt-thm}
\left\{
\begin{array}{cl}
\rho\ell & = \theta (1 - e^{-\alpha_{\theta}\lambda(1-\rho)\ell}) \\
(1-\rho)\ell & = (1-\theta) (1 - e^{-\alpha_{\theta}\lambda \rho\ell})
\end{array}
\right.
\end{equation}
Then $\lambda \mapsto \ell(\lambda)$ is a bijection from $(1, \infty)$ to $(0, 1)$. Denote by $\ell^{-1}$ the inverse function. For each $s\in (0, 1)$, we have
\begin{equation}
\label{eq: thm2}
\rho^{(n)}_{\lfloor sn\rfloor } \xrightarrow{n\to\infty} \rho\circ\ell^{-1}(s) \quad\text{in probability}.   
\end{equation}
\end{The}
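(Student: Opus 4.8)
The plan is to recognise the system~\eqref{eq: fixpt-thm} as the giant-component equations of a bipartite threshold percolation, and then to show that the first $\lfloor sn\rfloor$ vertices collected by Prim's Algorithm coincide, up to an $o(n)$ error in each colour, with that giant component. For a parameter $t>0$, let $G^{(n)}_t$ be the subgraph of $K_{n_b,n_w}$ retaining every edge $e$ with $U_e\le t/n$. Writing $x=\rho\ell$ and $y=(1-\rho)\ell$ for the fractions of \emph{all} vertices that are black-and-explored, respectively white-and-explored, the system~\eqref{eq: fixpt-thm} becomes
\[
x=\theta\big(1-e^{-\alpha_\theta\lambda y}\big),\qquad y=(1-\theta)\big(1-e^{-\alpha_\theta\lambda x}\big),
\]
which are exactly the survival equations for $G^{(n)}_t$ at $t=\alpha_\theta\lambda$: a black vertex lies off the giant iff none of its $\approx(1-\theta)n$ potential white neighbours propagates, and symmetrically for white, the identifications being $x=\theta\pi_b$ and $y=(1-\theta)\pi_w$ with $\pi_b,\pi_w$ the black/white survival probabilities. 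The threshold $t=\alpha_\theta$ is critical because the associated two-type branching process (black$\to$white offspring $\mathrm{Poisson}((1-\theta)t)$, white$\to$black offspring $\mathrm{Poisson}(\theta t)$) has mean matrix of spectral radius $t\sqrt{\theta(1-\theta)}$, equal to $1$ precisely when $t=\alpha_\theta$; thus $\lambda>1$ indexes the supercritical regime.

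First I would treat the purely analytic step: for each $\lambda>1$, show that the system has a unique solution $(\ell(\lambda),\rho(\lambda))\in(0,1)^2$, namely the maximal fixed point of the monotone map defining the survival probabilities, and that $\lambda\mapsto\ell(\lambda)$ is continuous and strictly increasing with $\ell(\lambda)\to0$ as $\lambda\downarrow1$ and $\ell(\lambda)\to1$ as $\lambda\to\infty$; monotonicity follows from the standard coupling in which increasing $\lambda$ only enlarges the open edge set, and this yields the bijection onto $(0,1)$ together with $\ell^{-1}$. Second, I would establish the law of large numbers for $G^{(n)}_{\alpha_\theta\lambda}$: with high probability it has a unique giant component of size $(\ell(\lambda)+o(1))n$ containing $(\rho(\lambda)\ell(\lambda)+o(1))n$ black vertices, all non-giant components being $O(\log n)$. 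As $G^{(n)}_t$ is a two-type Erd\H{o}s--R\'enyi graph this follows from multitype branching-process comparison; alternatively one re-derives it by exploring $G^{(n)}_t$ from a uniform vertex and applying a martingale/differential-equation concentration argument to the two-type exploration, which has the merit of being self-contained and of directly handling the bipartite structure.

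The crux, and the step I expect to be hardest, is to match Prim's first $\lfloor sn\rfloor$ vertices with this giant. Prim run from $\sigma(1)$ is invasion percolation, so it is \emph{not} a monotone threshold process; the key structural fact I would exploit instead is that invasion absorbs an entire threshold component before ever using a larger edge: if $C^{(n)}(p)$ denotes the component of $\sigma(1)$ in $\{e\in E_n:U_e\le p/n\}$ and $C^{(n)}(p)\neq V_n$, then the invaded set just before the invasion first uses an edge of weight exceeding $p/n$ equals $C^{(n)}(p)$ exactly, so that $\Sigma\big(\,|C^{(n)}(p)|\,\big)=C^{(n)}(p)$. Taking $p^\ast=\alpha_\theta\lambda$ with $\lambda=\ell^{-1}(s)$, I would then prove the concentration $nM_{\lfloor sn\rfloor}\to\alpha_\theta\lambda$ in probability, where $M_k$ is the largest invaded weight after $k$ steps, by squeezing with Step~2: for $\lambda'<\lambda$ the giant of $G^{(n)}_{\alpha_\theta\lambda'}$ has fewer than $sn$ vertices, so $\sigma(1)$'s component cannot reach $sn$ while $nM_k\le\alpha_\theta\lambda'$, and symmetrically for $\lambda'>\lambda$; letting $\lambda'\to\lambda$ pins the invasion level, and the above identity then forces $\Sigma(\lfloor sn\rfloor)$ to agree with the giant of $G^{(n)}_{\alpha_\theta\lambda}$ up to $o(n)$ vertices of each colour.

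Combining these gives $|\Sigma^b(\lfloor sn\rfloor)|=(\rho(\lambda)s+o(1))n$ against $\lfloor sn\rfloor=(s+o(1))n$, whence $\rho^{(n)}_{\lfloor sn\rfloor}\to\rho(\lambda)=\rho\circ\ell^{-1}(s)$ in probability; the limit $\rho\circ\ell^{-1}(s)\to 1/(1+\gamma_\theta)$ as $s\downarrow0$ recovers Theorem~\ref{thm: sublinear} at criticality, a useful consistency check. The delicate points I would budget effort for are all concentrated in the last paragraph and stem from the fact that a uniform $\sigma(1)$ lies off the giant with probability $1-\ell(\lambda)$, a positive constant: (i) when $\sigma(1)$ starts in a small component, one must show it merges into the giant at rescaled level $\alpha_\theta\lambda+o(1)$ (the extra level needed is $O(1/(|S|\,\ell))=o(1)$ since $|S|=O(\log n)$), so that at the merge $\Sigma$ jumps to essentially the whole giant without the invasion level having drifted; (ii) one must rule out invasion accumulating a non-negligible mass of small components before this merge, which follows from the $O(\log n)$ bound on non-giant components; and (iii) the monotonicity-in-$\lambda$ squeeze must be made quantitative with $o(n)$ error bounds uniform on compact $\lambda$-intervals. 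Each is standard in the invasion-percolation and giant-component literature but requires care to transpose to the two-type bipartite setting.
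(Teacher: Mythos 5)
Your overall architecture --- coupling Prim's sequence to threshold percolation at level $p^\ast=\alpha_\theta\lambda/n$ with $\lambda=\ell^{-1}(s)$, reading~\eqref{eq: fixpt-thm} as the survival equations of the two-type branching process, and importing the giant-component law of large numbers --- is exactly the paper's (Theorems~\ref{thm: br} and~\ref{thm: giant-size}, Lemma~\ref{lem: solv}). The genuine gap is in your matching step: the claimed concentration $nM_{\lfloor sn\rfloor}\to\alpha_\theta\lambda$ is false. With probability tending to $1-s$ the uniform start $\sigma(1)$ lies outside the level-$\lambda$ giant; to escape its component $S$ the invasion uses the minimum of roughly $|S|\cdot\Theta(n)$ outgoing weights, which conditionally on $S$ are i.i.d.\ uniform on $(p^\ast,1)$, so the overshoot is of order $1/(|S|n)$ in weight, i.e.\ of order $1/|S|$ on the rescaled level. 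Your estimate ``$O(1/(|S|\,\ell))=o(1)$ since $|S|=O(\log n)$'' inverts the dependence on $|S|$: $O(\log n)$ is an \emph{upper} bound, the overshoot is large when $|S|$ is \emph{small}, and $|S|=O(1)$ (for instance $\sigma(1)$ isolated at level $p^\ast$) occurs with probability bounded away from zero. Consequently $nM_{\lfloor sn\rfloor}$ exceeds $\alpha_\theta\lambda'$ for fixed $\lambda'>\lambda$ with non-vanishing probability, the $\lambda'>\lambda$ half of your squeeze fails, and the containment ``invaded set $\subseteq M_k$-component of $\sigma(1)$'' then only places $\Sigma(\lfloor sn\rfloor)$ inside a strictly larger giant, which does not determine its colour ratio.

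The record is in fact a red herring, and the repair is what the paper does. The correct structural input is Proposition~\ref{prop: prim}: at \emph{every} level $p$, \emph{all} components of $G(n_b,n_w,p)$ --- not only $\sigma(1)$'s, which is the sole case your invasion identity covers --- occupy consecutive intervals in Prim's order; an overshooting escape edge merely opens a new $p^\ast$-component interval, after which the invasion absorbs that entire component using edges of weight at most $p^\ast$ again, regardless of the running maximum. What then remains is to show that few small intervals precede the giant's interval, and the paper's Lemma~\ref{lem: kminus} does this by exchangeability: conditionally on the past, each new lead vertex starts the giant with probability bounded below, so the number of preceding components is stochastically geometric, each of size $o_p(n)$ by Johansson's bound on the second-largest component (no $O(\log n)$ bound is needed, nor is one supplied by the reference you would invoke). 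This gives $k_-(\lambda_\ast)=o_p(n)$, hence $k_+(\lambda_\ast)=sn+o_p(n)$ and $|\Sigma^b(\lfloor sn\rfloor)|=C^{\mathrm b}_n(\lambda_\ast)+o_p(n)$, which is the theorem; your points (i)--(iii) gesture at precisely this lemma, but as formulated they do not close it.
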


\begin{figure}[ht]
    \centering
    \subfloat[]{%
        \includegraphics[width=6.8cm]{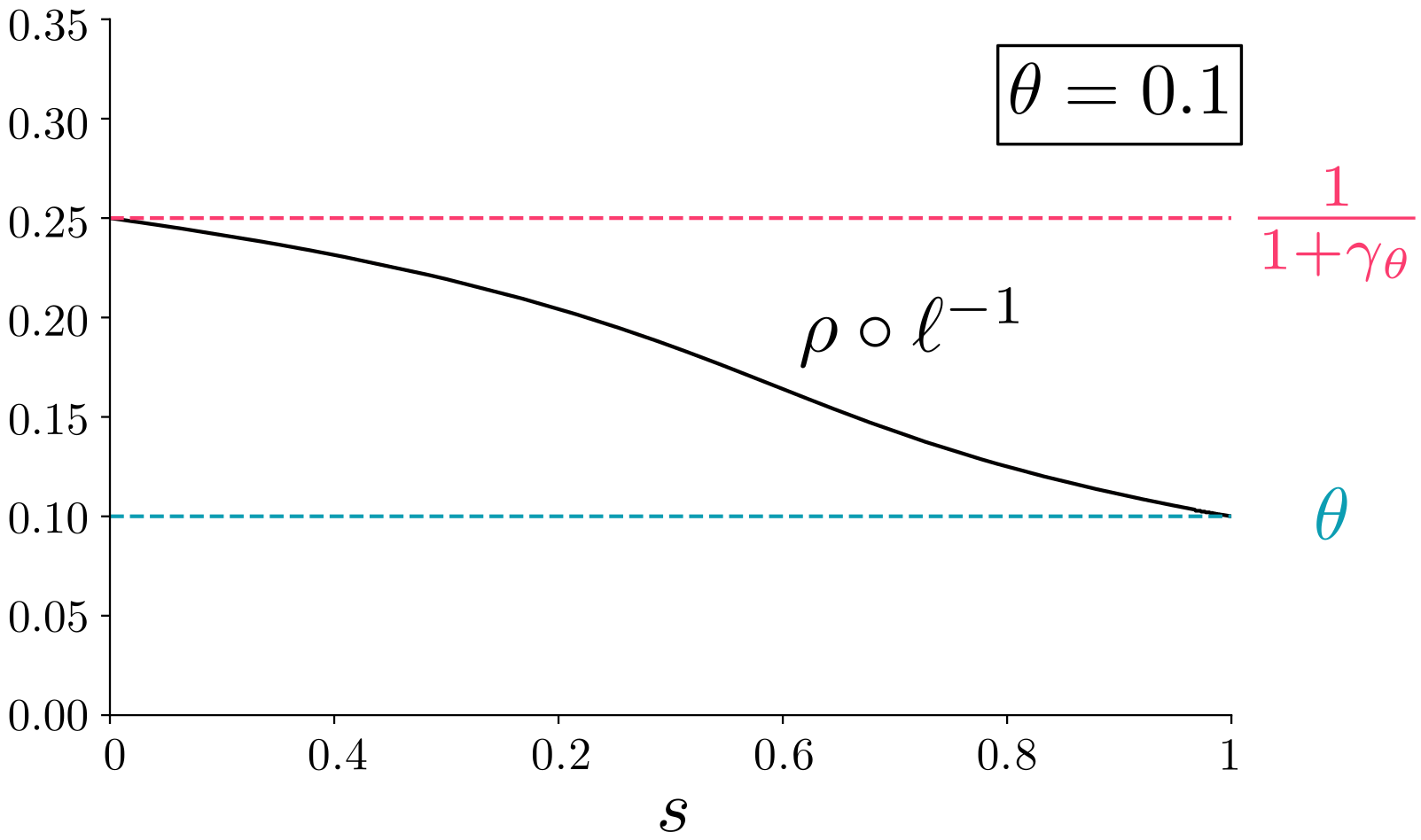}%
    }\qquad
    \subfloat[]{%
        \includegraphics[width=6.8cm]{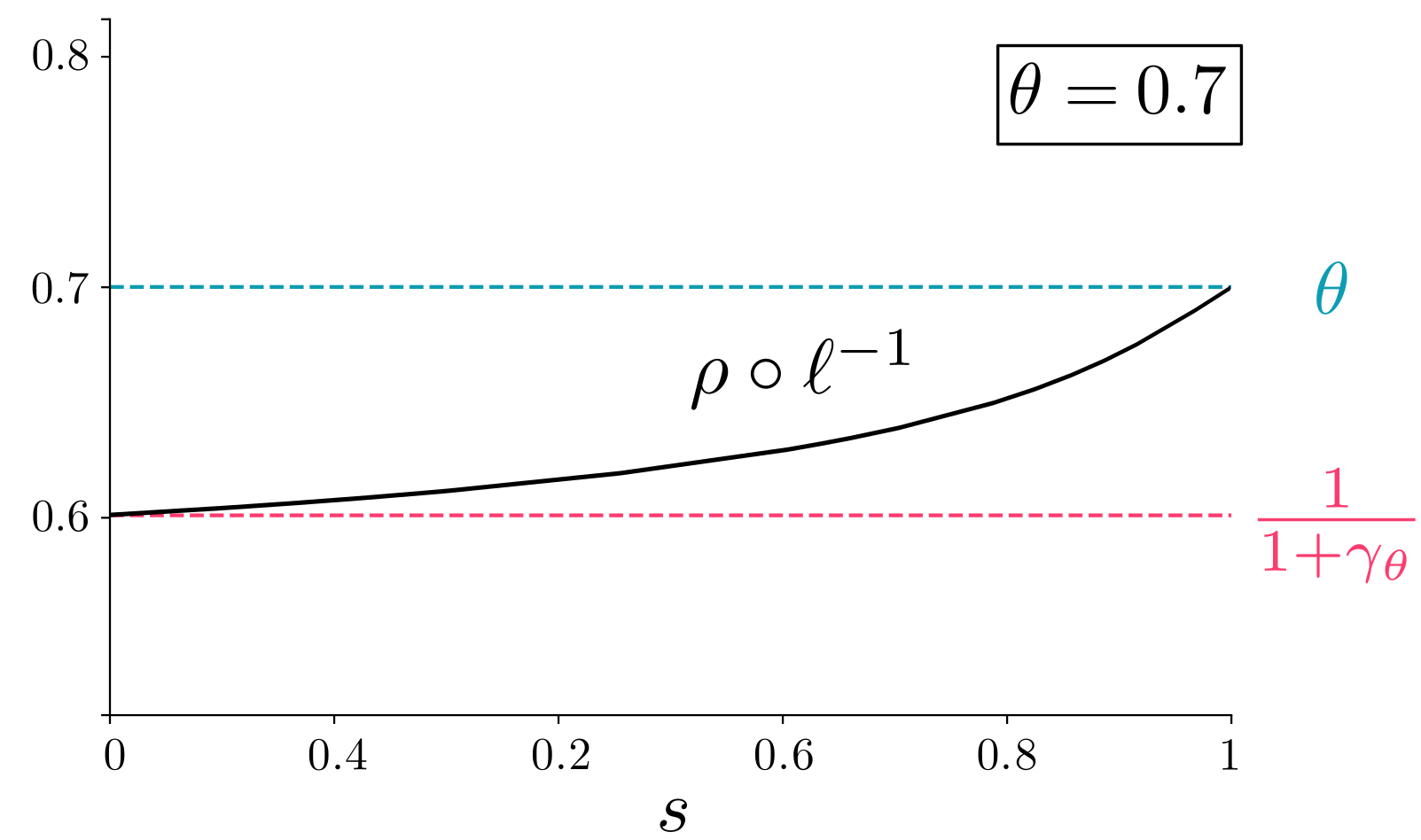}%
    }
    \caption{Plot of the function $s\mapsto \rho\circ \ell^{-1}(s)$ in the cases (a) $\theta=0.1$ and (b) $\theta=0.7$.}
    \label{fig:plot-fun}%
\end{figure}

In the sublinear regime, we note that the limit in~\eqref{eq: thm1} only depends on $\theta$ and satisfies that 
\[
\text{for } \theta\in \Big(0, \frac12\Big): \theta<\frac{1}{1+\gamma_{\theta}}<\frac12, \quad\text{and for } \theta \in \Big(\frac12, 1\Big): \frac12<\frac{1}{1+\gamma_{\theta}}<\theta.
\]
By contrast, in the linear regime, the limit in~\eqref{eq: thm2} depends on both $\theta$ and $s\in (0, 1)$; as shown in Fig.~\ref{fig:plot-fun}, it interpolates between $(1+\gamma_\theta)^{-1}$ and $\theta$ as $s$ increases. We also point out that Lemma~\ref{lem: solv} below shows that  $(1 + \gamma_\theta)^{-1} = \rho \circ \ell^{-1}(0+)$, so that the transition is continuous as we approach the  sublinear regime from the linear regime.

\begin{Req}[Influence of the starting vertex] 
\label{req1}
In Algorithm~\ref{algo: Prim}, we have taken $\sigma(1)$ to be a uniform vertex. Since under Assumption~\eqref{hyp: theta}, the probability of $\sigma(1)$ being a black vertex converges to a non-zero quantity, both limits in~\eqref{eq: thm1} and~\eqref{eq: thm2} still hold if $\sigma(1)$ is chosen to be a uniform black (resp.~white) vertex. Finally, by exchangeability, Theorems~\ref{thm: sublinear} and~\ref{thm: linear} remain valid if $\sigma(1)$ is any fixed vertex. 
\end{Req}

\begin{Req}[Interpretation in invasion percolation]
Prim's Algorithm is equivalent to the model of invasion percolation (\cite{WiWi1983, Stark1991}). The model is well-defined for a general weighted graph, but let us take the case of $(K_{n_b, n_w}, (U_e)_{e\in E_n})$ to illustrate. In the context of invasion percolation on $K_{n_b,n_w}$, the focus is on the sequence of subgraphs $(T_k)_{k\in [n]}$, produced by Algorithm~\ref{algo: Prim}. The evolution of $(T_k)_{k\in [n]}$ is meant to represent a growing cluster inside a disordered medium, always choosing the easiest next location to invade. As such, $(\rho^{(n)}_k)_{k\in [n]}$ tracks the proportion of black vertices in the cluster during the percolation. 
\end{Req}

Our approach to the problem is based upon a connection between Prim's Algorithm and the Bernoulli bond percolation of $K_{n_b, n_w}$. In fact, as we shall see, the pair $(\ell(\lambda), \rho(\lambda))$ in Theorem~\ref{thm: linear} is linked to the size of the giant component in the bond percolation. Let us briefly describe this connection here. 

Let $p\in (0, 1)$ and let $G(n_b, n_w, p)$  be the subgraph of $K_{n_b, n_w}$ with the vertex set $V_n$ and the edge set
\begin{equation}
\label{def: Enp}
E_n(p) =\{e\in E_n: U_e\le p\}. 
\end{equation}
Since the edge weights are i.i.d.~uniformly distributed, each edge of $K_{n_b, n_w}$ is present in $G(n_b, n_w, p)$ independently with probability $p$. 
The graph $G(n_b, n_w, p)$ can have several connected components, and the vertex sets of these components have a simple representation in the Prim sequence $(\sigma(i))_{i\in [n]}$. Indeed, recall the Prim edges $(e_i)_{i\in [n-1]}$ and let 
\[
J_1 = \inf\big\{ i\in [n-1]: U_{e_i} >p\} \wedge n;
\]
then the vertex set of the component of $G(n_b, n_w, p)$ containing $\sigma(1)$ is given by $\{\sigma(i): 1\le i\le J_1\}$. More generally, for $j\ge 2$, define
\[
J_j=\inf\{ i > J_{j-1}:  U_{e_i}>p\} \wedge n \quad \text{and}\quad m = \inf\{ j\ge 1: J_j=n\}.
\]
Proposition~\ref{prop: prim} below says that there are $m$ connected components  in $G(n_b, n_w, p)$, and the vertex set of the $j$th component is given by 
\begin{equation}
\label{id: cc-int}
\mathcal C_j(p):=\{\sigma(i): J_{j-1}+1\le i\le J_j\}, \quad 1\le j\le m,
\end{equation}
with the convention that $J_0=0$. This shows that the connected components of $G(n_b, n_w, p)$ always consist of vertices of {\it consecutive} Prim's ranks. 

The above representation of the connected components as intervals in Prim's ranking is known for various graphs and has led to many interesting applications; see for instance~\cite{Al1995, LyPeSc2006, DaSaVa2009, addario-berry_scaling_2017, GaPeSc2018, ABBa2023}. 
Its current form for the complete bipartite graph underpins our proof of
Theorem~\ref{thm: linear}, where we tune $p$ to a value in the supercritical regime that will match the size of the giant component to $sn$. We then rely on results about the giant component in $G(n_b, n_w, p)$ to prove the convergence in~\eqref{eq: thm2}. 

For the proof of Theorem~\ref{thm: sublinear}, we require a stronger coupling between the Prim sequence and the bond percolation. More specifically, we adapt a graph exploration process from Broutin and Marckert~\cite{broutin_new_2016} to the bipartite case, and apply it to $G(n_b, n_w, p)$ with $p$ at criticality. The convergence in~\eqref{eq: thm1} is then proved after a careful analysis of the stochastic processes that encode the graph exploration.

\paragraph{Discussion.} We discuss here some follow-up questions to Theorems~\ref{thm: sublinear} and~\ref{thm: linear}. 
\begin{itemize}
\item 
{\bf Fluctuations and Central limit theorems.} Both our main theorems can be viewed as a law-of-large-numbers type result. It is then natural to query the convergence rates in~\eqref{eq: thm1} and~\eqref{eq: thm2}, under potentially stronger assumptions. For the sublinear regime, this is currently under investigation by the first author in~\cite{}. For the linear regime, we believe that the fluctuations are of order $\sqrt{n}$, and may be deduced from the results in~\cite{Clancy25+} and a refinement of our arguments in Section~\ref{sec: linear}. 

\item 
{\bf General colour schemes and branching process connection.} As the graph $G(n_b, n_w, p)$ can be approximated with a two-type branching process (see Section~\ref{sec: percolation} for more detail), Theorem~\ref{thm: linear} is related to the well-known convergence of types theorem for {\it supercritical} multi-type branching processes (\cite{AN_book}, V.6, Theorem 1), while Theorem~\ref{thm: sublinear} is in a similar spirit to Theorem 1 of~\cite{Mi08} for {\it critical} branching processes. This leads us to believe that our proof can be readily adapted to a $d$-partite graph for any $d\ge 2$, and more generally  the stochastic block model as studied in~\cite{Clancy25+}.  

\item 
{\bf Uniform convergence and exceptional vertices.} We use the notation from Theorems~\ref{thm: sublinear} and~\ref{thm: linear}. As pointed out in Remark~\ref{req1}, the convergences in~\eqref{eq: thm1} and~\eqref{eq: thm2} also hold if we let $\sigma(1)$ be any fixed vertex. For $v\in V_n$, denote by $(\sigma_{v}(i))_{i\in [n]}$ the Prim sequence starting from $\sigma(1)=v$, and by 
$\rho^{(n)}_k(v)$ the proportion of black vertices among $\{\sigma_{v}(i):i\in [k]\}$. The following question has been put to us by Vlad Vysotskiy: is it true that
\[
\sup_{v\in V_n} \big|\rho^{(n)}_{sn}(v)-\rho\circ \ell^{-1}(s)\big| \xrightarrow{n\to\infty}0 \  \text{ and } \sup_{v\in V_n} \Big|\rho^{(n)}_{\kappa_n}(v)-\frac{1}{1+\gamma_{\theta}}\Big|\xrightarrow{n\to\infty} 0 \text{ in probability}?
\]
We believe that the first convergence will hold but the second will fail for $(\kappa_n)_{n\in \N}$ slowly enough. However, proving the claim is far beyond the scope of the current work; we thus leave it as open questions. 
\end{itemize}

\paragraph{Organisation of the paper.}
In Section~\ref{sec: explore}, we introduce the specific graph exploration that is key to our approach. After presentation of the various results on the bond percolation in Section \ref{sec: percolation}, we prove Theorem~\ref{thm: linear} in Section~\ref{sec: linear}. 
In Section~\ref{sec: neighbour}, we study the distributional properties of the graph exploration processes, paving the way for our proof of Theorem~\ref{thm: sublinear} in Section~\ref{sec: sublinear}. Appendix~\ref{sec: app} collects some technical results used in the proof. 

\paragraph{Notation.}
For $n\in \N$, we denote $[n]=\{1, 2, 3, \dots, n\}$ and use the convention that $[0]=\varnothing$. 
Unless said otherwise, all the random variables that appear so far and below are defined on the same probability space $(\Omega, \mathscr F, \mathbb P)$. 
Let $(X_n)_{n\in \N}$ be a sequence of real-valued random variables and $(a_n)_{n\in \N}$ a sequence of positive real numbers.  We denote 
\[
X_n = o_p(a_n), \; n\to \infty \quad\text{if}\quad \frac{X_n}{a_n} \xrightarrow{n\to\infty} 0 \quad\text{in probability.}
\]
Similarly, we write 
\[
X_n = O_p(a_n), \; n\to \infty \quad\text{if}\quad \bigg\{\frac{X_n}{a_n}: n\ge 1\bigg\} \text{ is tight in }\R. 
\]

\section{Graph exploration in Prim's ranking}
\label{sec: explore}

The aim of this section is two-fold. Firstly, we introduce a generalised graph exploration and discuss the properties of the vertex ordering induced by the exploration. This is largely inspired by Broutin and Marckert~\cite{broutin_new_2016}, although the presentation here is new. Secondly, we explain, in the specific case of $K_{n_b, n_w}$, how this leads to the representation~\eqref{id: cc-int} of the connected components in $G(n_b, n_w, p)$, which is the starting point for our proof of Theorem~\ref{thm: linear}, as well as a 2-neighbourhood exploration of $G(n_b, n_w, p)$ that is the main tool in our proof of Theorem~\ref{thm: sublinear}. 

\subsection{Vertex ordering and graph exploration order}
\label{sec: prim}

Let $V$ be a nonempty finite set. An {\it ordering} on $V$ is a bijection $\pi: \{1, 2, 3, \dots, |V|\}\to V$. For each $v\in V$, $\pi^{-1}(v)\in \N$ is called the {\it $\pi$-rank} of $v$. If $1\le i\le j\le |V|$, we define the following subset of $V$:
\[
\llbracket i, j\rrbracket_{\pi} = \big\{\pi(k): i\le k\le j\big\},
\]
which consists of elements ranked between $i$ and $j$. 

Here we are interested in a specific class of ordering. 
To introduce the class, we need the following notion. For a graph $G=(V, E)$ and a subset of vertices $W\subseteq V$, we say that $W$ is {\it connected} if the induced subgraph of $G$ on $W$ is connected, and two subsets $W, W'$ are {\it disconnected} from each other if there is no edge of the form $\{w, w'\}$ with $w\in W$, $w'\in W'$.

\begin{Def}[Graph exploration order (GEO)]
Let $G=(V, E)$ be a (finite) graph. An ordering $\pi$ on $V$ is said to be a  \emph{graph exploration order}, GEO in short, for $G$ if it satisfies the following property: for all $1\le i\le j\le |V|$, if $\pi(i)$ and $\pi(j)$ belong to the same connected component of $G$, then there exists some $i'\le i$ so that $\llbracket i', j\rrbracket_{\pi}$ is connected.
\end{Def}

If $\pi$ is the ordering induced by the usual breadth-first or depth-first exploration of a graph, then it is a GEO. Indeed, in these explorations, we extract a spanning tree from each connected component and the ancestors of each vertex in this tree are always explored before the vertex. Therefore, if $\pi(i)$ and $\pi(j)$ belong to the same connected component, then by taking $i'$ to be the $\pi$-rank of the root of the spanning tree of the component, we see that $\llbracket i', j\rrbracket_{\pi}$ contains all the ancestors of $\pi(i)$ and $\pi(j)$. 

\medskip

For a graph $G=(V, E)$ and two vertices $u, v\in V$, a {\it path from $u$ to $v$} is a finite sequence of vertices $(w_j)_{1\le j\le k}$ satisfying $w_1=u, w_k=v$ and $\{w_j, w_{j+1}\}\in E$ for all $1\le j\le k-1$. 
Let $\pi$ be an ordering of $V$. Suppose that $u, v$ are two vertices of $G$ satisfying $\pi^{-1}(u)<\pi^{-1}(v)$. 
We say that a path $(w_j)_{1\le j\le k}$ from $u$ to $v$ is {\it $\pi$-good} if 
\[
\pi^{-1}(w_1)<\pi^{-1}(w_2)<\cdots<\pi^{-1}(w_k).
\]
The following lemma summarizes the features of a GEO that are used in the main proofs.
\begin{Lem}
\label{lem: prim}
Let $G=(V, E)$ be a graph and $\pi$ a GEO for $G$. Let $C=(V_C, E_C)$ be a connected component of $G$. The following statements hold true.
\begin{enumerate}[(i)]
\item
Let $k_-^C=\min\{\pi^{-1}(v): v\in V_C\}$ and  $k_+^C=\max\{\pi^{-1}(v): v\in V_C\}$; then we have 
\[
V_C=\big\llbracket k_-^C,  k_+^C\big\rrbracket_{\pi}.
\]
\item 
For every $v\in V_C$, there exists a $\pi$-good path from $\pi(k_-^C)$ to $v$. 
\end{enumerate}
Moreover, if $\pi$ is an ordering of $V$ such that (i) and (ii) hold for each connected component of $G$, then $\pi$ is a GEO. 
\end{Lem}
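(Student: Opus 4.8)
The plan is to derive (i) and (ii) directly from the defining property of a GEO, and the single fact that drives everything is the following observation: \emph{for every $m$ with $k_-^C \le m \le k_+^C$, the initial segment $\llbracket k_-^C, m\rrbracket_\pi$ is connected and contained in $V_C$.} To see this, I would apply the GEO property to the pair $i = k_-^C \le j = m$ (both $\pi(k_-^C)$ and $\pi(m)$ lie in $V_C$, hence in the same component): it furnishes some $i' \le k_-^C$ with $\llbracket i', m\rrbracket_\pi$ connected. This connected set contains $\pi(k_-^C)\in V_C$, so by maximality of the component $V_C$ it is contained in $V_C$; in particular $\pi(i')\in V_C$, which forces $i'\ge k_-^C$ and hence $i'=k_-^C$. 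Part (i) is then the special case $m=k_+^C$: the observation gives $\llbracket k_-^C, k_+^C\rrbracket_\pi \subseteq V_C$, while the reverse inclusion $V_C \subseteq \llbracket k_-^C, k_+^C\rrbracket_\pi$ is immediate from the definitions of $k_-^C$ and $k_+^C$.

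For part (ii), I would induct on the rank $m$ from $k_-^C$ to $k_+^C$, proving that there is a $\pi$-good path from $\pi(k_-^C)$ to $\pi(m)$. The base case $m=k_-^C$ is the one-vertex path. For the inductive step, the observation gives that $\llbracket k_-^C, m\rrbracket_\pi$ is connected; since it has at least two vertices, $\pi(m)$ has a neighbour $\pi(m')$ inside it, necessarily with $m'<m$. By the induction hypothesis there is a $\pi$-good path from $\pi(k_-^C)$ to $\pi(m')$, and appending $\pi(m)$ keeps the ranks strictly increasing, yielding the desired $\pi$-good path.

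For the converse, I would assume (i) and (ii) hold for every component and fix $i\le j$ with $\pi(i),\pi(j)$ in a common component $C$. By (i) we have $k_-^C\le i\le j\le k_+^C$, so I take $i'=k_-^C\le i$ and verify that $\llbracket k_-^C, j\rrbracket_\pi$ is connected. Here I invoke (ii): every $\pi(m)$ with $k_-^C\le m\le j$ is joined to $\pi(k_-^C)$ by a $\pi$-good path whose vertices all lie in $V_C$ (intermediate vertices of a path stay in the component) and whose ranks all lie in $[k_-^C,m]\subseteq[k_-^C,j]$; hence the path stays inside $\llbracket k_-^C, j\rrbracket_\pi$. Every vertex of the segment is thus joined to the common vertex $\pi(k_-^C)$ within the segment, so the segment is connected, establishing the GEO property.

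The arguments are elementary; the two steps needing care are the maximality argument pinning down $i'=k_-^C$, which upgrades the one-sided bound in the GEO definition to the exact initial segment, and, in the converse, the verification that the $\pi$-good paths supplied by (ii) remain inside the truncated segment $\llbracket k_-^C, j\rrbracket_\pi$ rather than escaping to higher ranks. The latter is the crux: it is precisely what allows a bound on ranks up to $j$ to certify connectivity of the segment up to $j$, and I expect it to be the main point to state cleanly.
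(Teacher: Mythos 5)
Your proof is correct in substance and follows essentially the same route as the paper's: apply the GEO property with left index $k_-^C$, use maximality of the component to pin the resulting $i'$ down to $k_-^C$, build the $\pi$-good paths by induction on the rank, and, for the converse, use the fact that the ranks along a $\pi$-good path from $\pi(k_-^C)$ to $\pi(m)$ are strictly increasing and therefore stay in $[k_-^C, j]$ --- the point you rightly single out as the crux, and which the paper's proof leaves implicit when it says that applying (ii) to each vertex of $\llbracket k_-^C, j\rrbracket_\pi$ shows the set is connected.

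One ordering slip needs repair: as written, your key observation is circular. To apply the GEO property to the pair $(k_-^C, m)$ you assert that $\pi(m)\in V_C$ for every $m$ with $k_-^C\le m\le k_+^C$ --- but that assertion is exactly part (i), which you then derive as the special case $m=k_+^C$ of the very observation being proved. At the point where the observation is invoked, membership $\pi(m)\in V_C$ is only known for $m\in\{k_-^C, k_+^C\}$ and for the ranks of other vertices of $V_C$. The fix is already contained in your argument and costs nothing: prove the case $m=k_+^C$ first (there both endpoints lie in $V_C$ by definition, and your maximality argument gives $i'=k_-^C$ and $\llbracket k_-^C, k_+^C\rrbracket_\pi\subseteq V_C$, i.e.\ part (i)); only then does $\pi(m)\in V_C$ hold for all intermediate $m$, licensing the observation in the generality that the induction for (ii) requires. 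With that reordering your proof is complete; the paper sidesteps the issue by re-applying the GEO definition afresh at each inductive step of (ii) instead of stating the segment-connectivity claim up front.
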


\begin{proof}
Let us first show that if $\pi$ is a GEO, then (i) and (ii) hold. We denote $W=\llbracket k_-^C,  k_+^C\rrbracket_{\pi}$. 
Applying the definition of GEO to $k_-^C$ and $k_+^C$, we can find some $k\le k_-^C$ so that $\llbracket k, k_+^C\rrbracket_{\pi}$ is connected. However, $k_-^C$ being the lowest $\pi$-rank in $V_C$ implies that $k=k_-^C$, and therefore $W\subseteq V_C$. On the other hand, if $v\in V_C\setminus W$, then either $\pi^{-1}(v)<k_-^C$ or $\pi^{-1}(v)>k_+^C$, contradicting in either case the definition of $k_-^C$ or $k_+^C$. Hence, we must have $W=V_C$, which proves {\it (i)}. For {\it (ii)}, we will argue by an induction on $k=\pi^{-1}(v)-k_-^C$, the relative rank of $v$ in $V_C$. If $k=1$, i.e.~$v=\pi(k_-^C)$, this is clearly true. 
Suppose that the statement holds for any vertex of rank up to $k_-^C+k$ and $|V_C|\ge k+k_-^C+1$. Let us consider $v=\pi(k_-^C+k+1)$. Applying the definition of GEO to $k_-^C$ and $k_-^C+k+1$, we can find some $i\le k_-^C$ so that $\llbracket i, k_-^C+k+1\rrbracket_{\pi}$ is connected. However, $k_-^C$ being the lowest rank among the vertices of $V_C$ means that it must be $i=k_-^C$. Hence, we can find some $u\in \llbracket k_-^C, k_-^C+k\rrbracket_{\pi}$ that is adjacent to $v$. By the induction assumption, there is a $\pi$-good path from $\pi(k_-^C)$ to $u$. Appending $v$ to that path yields a $\pi$-good path from $\pi(k_-^C)$ to $v$. This proves {\it (ii)}. 

Now suppose that {\it (i)} and {\it (ii)} hold for an ordering $\pi$. Suppose that $1\le i\le  j\le |V|$ and $\pi(i), \pi(j)$ both belong to the connected component $C=(V_C, E_C)$. Then {\it (i)} implies $\llbracket k_-^C, j\rrbracket_{\pi}\subseteq V_C$. Now applying {\it (ii)} to each $v\in \llbracket k_-^C, j\rrbracket_{\pi}$ shows that the set is connected. We conclude that $\pi$ is a GEO. 
\end{proof}

Let us recall from Algorithm~\ref{algo: Prim} the Prim sequence $(\sigma(i))_{i\in[n]}$ and the Prim edge $e_{k-1}$ which connects $\sigma(k)$ to some $\sigma(k')$ with $k'<k$. Recall also the graph $G(n_b, n_w, p)=(V_n, E_n(p))$ with the edge set $E_n(p)$ defined in~\eqref{def: Enp}. 

\begin{Prop}
\label{prop: prim}
For each $p\in (0, 1)$, the vertex sets of the connected components of $G(n_b, n_w, p)$ are given by $\mathcal C_j(p), 1\le j\le m$, as defined in~\eqref{id: cc-int}, and the Prim sequence $\sigma: [n]\to V_n$ defines a GEO for $G(n_b, n_w, p)$. 
\end{Prop}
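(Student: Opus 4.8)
The plan rests on a single consequence of the greedy rule in Algorithm~\ref{algo: Prim}: the Prim edge $e_i$ is, by construction, the lightest edge of $K_{n_b, n_w}$ crossing from $\Sigma(i)$ to $V_n \setminus \Sigma(i)$, so if $U_{e_i} > p$ then \emph{every} such crossing edge has weight $> p$ and is therefore absent from $G(n_b, n_w, p)$. I would organise the argument into establishing a family of ``cuts'', then the connectedness of each candidate component, and finally the GEO property via Lemma~\ref{lem: prim}.

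First I would record that for each $0 \le j \le m$ the set $\Sigma(J_j)$ is disconnected from its complement in $G(n_b, n_w, p)$: the endpoints $j = 0$ (empty set) and $j = m$ ($\Sigma(J_m) = V_n$) are trivial, while for $1 \le j \le m-1$ this is exactly the observation above applied to the separating edge $e_{J_j}$, whose weight exceeds $p$ by the definition of $J_j$. Next I would show each $\mathcal{C}_j(p) = \{\sigma(i): J_{j-1}+1 \le i \le J_j\}$ is connected: the Prim edges $e_{J_{j-1}+1}, \dots, e_{J_j - 1}$ all have weight $\le p$, and each joins $\sigma(i+1)$ to some earlier $\sigma(k')$; since $\Sigma(J_{j-1})$ admits no light crossing edge (the cut at level $j-1$), $\sigma(k')$ cannot lie in $\Sigma(J_{j-1})$, forcing $J_{j-1}+1 \le k' \le i$, so these edges form a spanning tree of $\mathcal{C}_j(p)$ inside $G(n_b, n_w, p)$. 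Combining the two facts, each $\mathcal{C}_j(p)$ is connected and, being squeezed between the cuts at $J_{j-1}$ and $J_j$, is disconnected from every other $\mathcal{C}_i(p)$; since the $\mathcal{C}_j(p)$ partition $V_n$, they are precisely the vertex sets of the connected components, proving~\eqref{id: cc-int}.

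To obtain the GEO property I would invoke the converse direction of Lemma~\ref{lem: prim}, verifying (i) and (ii) for each component $\mathcal{C}_j(p)$. Property (i) is immediate from~\eqref{id: cc-int}, which gives $\mathcal{C}_j(p) = \llbracket J_{j-1}+1, J_j \rrbracket_\sigma$ with $k_-^{\mathcal{C}_j} = J_{j-1}+1$ and $k_+^{\mathcal{C}_j} = J_j$. For (ii) I would reuse the in-component spanning tree from the previous step: rooting it at $\sigma(J_{j-1}+1)$ and orienting each edge towards its lower-ranked endpoint, every vertex's parent has strictly smaller rank, so the tree path from the root down to any $v \in \mathcal{C}_j(p)$ has strictly increasing ranks and uses only edges of weight $\le p$. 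That is exactly a $\sigma$-good path in $G(n_b, n_w, p)$, as required.

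The one genuinely delicate point is the connectedness argument, specifically the claim that a light Prim edge appearing after a cut reconnects only within the current block and never back into a completed one. This is where Prim's minimality is indispensable and where a naive exploration would fail: it is only because $e_{J_{j-1}}$ was the \emph{minimum} crossing edge that the cut at $J_{j-1}$ blocks all light edges, not merely the one Prim happened to select. I would therefore state the cut invariant explicitly and apply it carefully; the remaining verifications, including the boundary cases $J_0 = 0$ and $J_m = n$ where no separating edge is present, are routine.
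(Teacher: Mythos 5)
Your proof is correct and follows essentially the same route as the paper's: both hinge on the observation that the Prim edge $e_{J_j}$ is the \emph{minimum} edge crossing from $\Sigma(J_j)$ to its complement, so $U_{e_{J_j}}>p$ empties that cut in $E_n(p)$, while the light Prim edges $e_i$, $J_{j-1}<i<J_j$, attach each $\sigma(i+1)$ to an earlier vertex that the cut forces into the same block, yielding a spanning tree of $\mathcal C_j(p)$. The only differences are organisational: you state the cut invariant for all $j$ at once (avoiding the paper's induction over components), and you deduce the GEO property by verifying (i)--(ii) and invoking the converse direction of Lemma~\ref{lem: prim}, whereas the paper checks the GEO definition directly from the in-block connectivity it has already established --- both are valid and equally short.
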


\begin{proof}
We first prove the representation of the connected components. To ease the notation, we will simply write $\mathcal C_j$ instead of $\mathcal C_j(p)$, defined as $\llbracket J_{j-1}+1, J_j\rrbracket_{\sigma}$. For $j=1$, as long as $J_1>1$, $e_1\in E_n(p)$ and therefore $\sigma(2)$ is connected to $\sigma(1)$ via $e_1$. Proceeding in this way, we find that for $1\le i\le J_1$, $\sigma(i)$ is connected to $\llbracket 1, i-1\rrbracket_{\sigma}$. It follows that $\mathcal C_1$ is connected in $G(n_b, n_w, p)$. If $m=1$, then there is nothing to show. Assume that $m>1$. Since $e_{J_1}$ has the smallest edge weight among all the edges connecting $\mathcal C_1$ to $V_n\setminus \mathcal C_1$ and $U_{e_{J_1}}>p$, we see that none of these edges is present in $E_n(p)$. Hence, $\mathcal C_1$ is disconnected from $V_n\setminus \mathcal C_1$, which shows that $\mathcal C_1$ is a connected component. 

More generally for $1\le j\le m-1$, suppose that we have shown that $\mathcal C_{j'}, j'\le j$, are connected components. This implies in particular that $\cup_{j'\le j}\mathcal C_{j'}$ are disconnected from $\cup_{j'> j}\mathcal C_{j'}$. On the other hand, if $J_{j+1}-J_{j}>1$, then the edge $e_{J_j+1}$ is present in $E_n(p)$, connecting $\sigma(J_j+2)$ to $\Sigma_{J_j+1}=\cup_{j'<j}\mathcal C_{j'}\cup \{\sigma(J_j+1)\}$. Combined with the previous argument, it can only be that the other endpoint of $e_{J_j+1}$ is $\sigma(J_j+1)$. Proceeding in this way, we find that for $J_j< i\le J_{j+1}$, $\sigma(i)$ is connected to $\llbracket J_j+1, i-1\rrbracket_{\sigma}$. Hence, $\mathcal C_{j+1}$ is connected in $G(n_b, n_w, p)$. The definitions of $e_{J_{j+1}}$ and $J_{j+1}$ then imply that $\mathcal C_{j+1}$ must be disconnected from $\cup_{j'>j+1}\mathcal C_{j'}$. This shows that $\mathcal C_j, 1\le j\le m$, are the vertex sets of the connected components of $G(n_b, n_w, p)$. 

To see why $\sigma$ is a GEO, let $1\le i_1<i_2\le n$ and suppose that $\sigma(i_1), \sigma(i_2)$ belong to the same connected component of $G(n_b, n_w, p)$. Thanks to the representation~\eqref{id: cc-int}, we can find some $j\in [m]$ so that $J_j+1\le i_1<i_2\le J_{j+1}$. Take $i'=J_j+1$; then the previous arguments have shown that $\llbracket J_j+1, i_2\rrbracket_{\sigma}$ is connected. This concludes the proof. 
\end{proof}

Let $G=(V, E)$ be a graph and $\pi$ be an ordering on $V$. The following algorithm is an extension of the habitual breadth-first and depth-first explorations. 
For a subset of vertices $A\subseteq V$, we define its {\em $1$-neighborhood} as follows:   

\begin{equation}
\label{def: N1}
\mathcal N(A)=\big\{v\in V\setminus A: \exists\, a\in A \text{ such that }\{a, v\}\in E\big\}.
\end{equation}
If $A=\{v\}$, we often write $\cN(v):=\mathcal N(\{v\})$. 

\begin{Alg}[Graph exploration in $\pi$-ordering]
\label{algo: graph}
Let $\hat\pi: \{1, 2, \dots, |V|\}\to V$ be defined as follows.
    \noindent
    {\bf Step 1. } Set $\hat\pi(1)=\pi(1)$ and let $\hat{\mathcal{A}}_1 = \mathcal{N}(\hat\pi(1))$.
    
    \noindent
    {\bf Step $\boldsymbol{2\le k\le |V|}$. } 
    If $\hat{\mathcal{A}}_{k-1}$ is nonempty, set $\hat{\pi}(k)$ to be the element of $\hat{\mathcal A}_{k-1}$ with the lowest $\pi$-rank; otherwise, set $\hat{\pi}(k)$ to be the element of $V\setminus\{\hat\pi(1), \hat\pi(2),\dots, \hat\pi(k-1)\}$ with the lowest $\pi$-rank. Let $\hat{\mathcal{A}}_k =\mathcal N(\{\hat\pi(1), \hat\pi(2),\dots,\hat\pi(k)\})$.
\end{Alg}

\begin{Prop}
    \label{luka-preserver-graph-explo}
Let $\hat\pi$ be as in Algorithm~\ref{algo: graph}.   If $\pi$ is a GEO for $G$, then $\hat\pi=\pi$.
\end{Prop}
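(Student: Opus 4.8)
The plan is to prove by induction on $k$ the stronger statement that $\hat\pi(i)=\pi(i)$ for every $i\le k$. The base case $k=1$ holds by construction, since Step~1 of Algorithm~\ref{algo: graph} sets $\hat\pi(1)=\pi(1)$. Assume now that $\hat\pi(i)=\pi(i)$ for all $i\le k$, and let me deduce $\hat\pi(k+1)=\pi(k+1)$. Under the induction hypothesis the set of already-placed vertices is $\{\hat\pi(1),\dots,\hat\pi(k)\}=\llbracket 1,k\rrbracket_\pi$, so the active set examined at step $k+1$ is $\hat{\mathcal A}_k=\mathcal N(\llbracket 1,k\rrbracket_\pi)$.

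The first observation is that, by the definition~\eqref{def: N1} of the $1$-neighbourhood, every element of $\hat{\mathcal A}_k$ lies in $V\setminus\llbracket 1,k\rrbracket_\pi$ and therefore has $\pi$-rank at least $k+1$. Consequently $\pi(k+1)$ already carries the smallest $\pi$-rank that any vertex of $\hat{\mathcal A}_k$ could possibly have, so to identify $\hat\pi(k+1)$ it suffices to show that $\pi(k+1)$ genuinely belongs to $\hat{\mathcal A}_k$ whenever the latter is nonempty. The complementary case $\hat{\mathcal A}_k=\varnothing$ is immediate: Algorithm~\ref{algo: graph} then selects the lowest $\pi$-rank vertex outside $\llbracket 1,k\rrbracket_\pi$, which is exactly $\pi(k+1)$. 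Thus the whole induction reduces to a single claim: if $\mathcal N(\llbracket 1,k\rrbracket_\pi)\neq\varnothing$, then $\pi(k+1)\in\mathcal N(\llbracket 1,k\rrbracket_\pi)$.

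To establish this claim I would invoke both parts of Lemma~\ref{lem: prim}, and this is precisely the main obstacle — the only place where the GEO hypothesis is used. Pick any $w\in\mathcal N(\llbracket 1,k\rrbracket_\pi)$; it is adjacent to some $\pi(i)$ with $i\le k$, so $w$ and $\pi(i)$ lie in a common connected component $C$, while $\pi^{-1}(w)\ge k+1$. Hence $k_-^C\le i\le k<k+1\le \pi^{-1}(w)\le k_+^C$, and part~(i) yields $V_C=\llbracket k_-^C,k_+^C\rrbracket_\pi\ni\pi(k+1)$: so $\pi(k+1)$ lies in the very component that renders $\hat{\mathcal A}_k$ nonempty. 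Part~(ii) then supplies a $\pi$-good path from $\pi(k_-^C)$ to $\pi(k+1)$; since $k_-^C\le k<k+1$ this path has at least two vertices, and its penultimate vertex $u$ satisfies $\pi^{-1}(u)<k+1$, i.e.~$u\in\llbracket 1,k\rrbracket_\pi$, and is adjacent to $\pi(k+1)$. Therefore $\pi(k+1)\in\mathcal N(\llbracket 1,k\rrbracket_\pi)$, which proves the claim and, by the rank bound of the previous paragraph, forces $\hat\pi(k+1)=\pi(k+1)$. Everything outside this claim is routine bookkeeping about $\pi$-ranks, so the induction closes and $\hat\pi=\pi$.
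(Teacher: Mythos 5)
Your proof is correct and takes essentially the same route as the paper's: the induction is equivalent to the paper's maximal-agreement-index contradiction, the empty-$\hat{\mathcal A}_k$ case is handled identically, and the substantive step in both is deducing $\pi(k+1)\in\hat{\mathcal A}_k$ from Lemma~\ref{lem: prim}. The only (cosmetic) difference is that you merge the paper's two cases into a single claim, using a witness $w\in\hat{\mathcal A}_k$ together with the interval property {\it (i)} to place $\pi(k+1)$ in the component straddling rank $k$, and then the $\pi$-good path of {\it (ii)} to exhibit a neighbour of $\pi(k+1)$ inside $\llbracket 1,k\rrbracket_{\pi}$, where the paper instead treats the lead-vertex case separately via disconnection.
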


\begin{proof}
Let us denote $k_{\ast}=\max\{k\in [n]: \pi(k)=\hat\pi(k)\}$. We note that since $\pi(1)=\hat\pi(1)$, $k_{\ast}>1$. Suppose that $k_{\ast}<|V|$. If $\hat{\mathcal{A}}_{k_{\ast}}=\varnothing$, then by definition $\hat\pi(k_{\ast}+1)$ is the lowest $\pi$-ranking element of $V\setminus\{\hat\pi(i): 1\le i\le k_{\ast}\}=\{\pi(i): k_{\ast}<i\le |V|\}$, which is $\pi(k_{\ast}+1)$. This contradicts with the choice of $k_{\ast}$; so we must have $\hat{\mathcal{A}}_{k_{\ast}}\ne\varnothing$.
Denoting $v_{\ast}=\pi(k_{\ast}+1)$, we distinguish two cases.

\smallskip
\noindent
{\bf Case 1: } $v_{\ast}$ is the lowest $\pi$-ranking vertex in its connected component. Thanks to Lemma~\ref{lem: prim}, we see that $\llbracket 1, k_{\ast}\rrbracket_{\pi}$ is disconnected from $\llbracket k_{\ast}+1, |V|\rrbracket_{\pi}$. On the other hand, from Algorithm~\ref{algo: graph} and the choice of $k_{\ast}$, we deduce that 
\begin{equation}
\label{id: 1-N}
\hat{\mathcal{A}}_{k_{\ast}}=
\bigcup_{i\le k_{\ast}}\mathcal N\big(\hat\pi(i))\setminus \{\hat\pi(1), \hat\pi(2), \dots, \hat\pi(k_{\ast})\} = \bigcup_{i\le k_{\ast}}\mathcal N\big(\pi(i))\setminus\llbracket 1, k_{\ast}\rrbracket_{\pi}. 
\end{equation}
It then follows from the connectivity property of $\llbracket 1, k_{\ast}\rrbracket_{\pi}$ that we would have $\hat{\mathcal{A}}_{k_{\ast}}=\varnothing$ in this case, which is impossible.

\smallskip
\noindent
{\bf Case 2: }
Let $k_-$ be the lowest $\pi$-rank of the connected component containing $v_{\ast}$ and we have $k_-<k_{\ast}+1$. Lemma~\ref{lem: prim} says that $\llbracket k_-, k_{\ast}+1\rrbracket_{\pi}$ is connected. Hence, 
we can find some $k_-\le k\le k_{\ast}$ so that $v_{\ast}\in \mathcal N(\pi(k))$. We note that~\eqref{id: 1-N} still holds and as a result, we find $v_{\ast}\in \hat{\mathcal{A}}_{k_{\ast}}$. According to the algorithm, we would then set $\hat\pi(k_{\ast}+1)=\pi(k_{\ast}+1)$, contradicting the definition of $k_{\ast}$. 

To conclude, it is impossible to have $k_{\ast}<|V|$, and therefore $\hat\pi=\pi$. 
\end{proof}

\subsection{A 2-neighbourhood exploration of $G(n_b, n_w, p)$}

Algorithm~\ref{algo: graph} allows us to explore $G(n_b, n_w, p)$ in Prim's order. Propositions~\ref{prop: prim} and~\ref{luka-preserver-graph-explo} combined then tell us that this amounts to examining the 1-neighhbourhood of each $\sigma(i)$ in turn. For our proof in the sublinear regime however, it is more convenient to explore the graph from vertices of the same colour. This leads to the following exploration algorithm. We will need the following notation. For a subset $A\subseteq V_n$, we define the following set to be its {\it 2-neighbourhood}:
\begin{equation}
\label{def: N2}
\mathcal N^2(A) = \mathcal N\big(\mathcal N(A)\big)\setminus A.
\end{equation}
We note in particular that $A$, $\cN(A)$ and $\cN^2(A)$ are disjoint from each other according to the definition. 
In the case that $A=\{v\}$, we denote $\cN^2(v)=\cN^2(\{v\})$. 

\begin{Alg}[2-neighbourhood exploration in Prim's order]
\label{algo: 2-explore}
Given $G(n_b, n_w, p)$ and the Prim sequence $(\sigma(i))_{i\in [n]}$, define an ordering on the set of black vertices $\sigma^b: [n_b]\to V^b_n$ as follows.

\smallskip
\noindent
{\bf Step 1. } Set $\sigma^b(1)$ to be the lowest ranking black vertex in $(\sigma(i))_{i\in [n]}$, and let 
$\cA^b_1=\cN^2(\sigma^b(1))$. 

\smallskip
\noindent
{\bf Step $\boldsymbol{2\le k\le n_b}$. } 
If $\cA^b_{k-1}$ is nonempty, set $\sigma^b(k)$ to be the lowest ranking element of $\cA^b_{k-1}$; otherwise, set $\sigma^b(k)$ to be the lowest ranking element of $V^b_n\setminus\{\sigma^b(1),\sigma^b(2),\dots, \sigma^b(k-1)\}$. 

Set $\cA^b_k=\cN^2(\{\sigma^b(1), \sigma^b(2),\dots, \sigma^b(k)\})$. 
\end{Alg}

The following proposition says that the ordering $\sigma^b$ is still consistent with Prim's order. Recall that $\Sigma(i)=\llbracket 1, i\rrbracket_{\sigma}$. 

\begin{Prop}
\label{prop: prim-explore}
For $k\in [n_b]$, let $\tau^b_k$ be the rank of the $k$-th black vertex in the Prim sequence, namely, $\tau^b_k=\min\{i: |\Sigma(i)\cap V^b_n|=k\}$. Then $\sigma^b(k)=\sigma(\tau^b_k)$ for each $k\in [n_b]$.     
\end{Prop}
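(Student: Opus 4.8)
The plan is to realise Algorithm~\ref{algo: 2-explore} as an instance of Algorithm~\ref{algo: graph} run on an auxiliary graph, and then invoke Proposition~\ref{luka-preserver-graph-explo}. To this end I would introduce the \emph{black projection} $G^b=(V^b_n, E^b)$ of $G(n_b,n_w,p)$, in which two black vertices $u,v$ are joined by an edge of $E^b$ precisely when $v\in\cN^2(u)$, i.e.\ when they share a common white neighbour in $G(n_b,n_w,p)$. Let $\pi^b$ denote the restriction of the Prim order to the black vertices, $\pi^b(k)=\sigma(\tau^b_k)$; this is exactly the ordering that the proposition asks us to identify with $\sigma^b$. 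The argument then rests on two claims: (A) $\pi^b$ is a GEO for $G^b$; and (B) Algorithm~\ref{algo: 2-explore} coincides, step by step, with Algorithm~\ref{algo: graph} applied to the pair $(G^b,\pi^b)$. Granting both, the output $\hat\pi$ of that run of Algorithm~\ref{algo: graph} equals $\sigma^b$ by (B) and equals $\pi^b$ by (A) and Proposition~\ref{luka-preserver-graph-explo}, whence $\sigma^b=\pi^b$, which is the assertion.

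Claim (B) is bookkeeping. Since $G(n_b,n_w,p)$ is bipartite with parts $V^b_n,V^w_n$, for any set $A$ of black vertices the $1$-neighbourhood of $A$ inside $G^b$ is exactly $\cN^2(A)$ computed in $G(n_b,n_w,p)$; hence the active sets $\cA^b_k$ of Algorithm~\ref{algo: 2-explore} match the sets $\hat{\mathcal A}_k$ of Algorithm~\ref{algo: graph}. Moreover, among black vertices the $\pi^b$-rank is increasing in the Prim rank, so ``lowest $\pi^b$-ranking'' is the same as ``lowest ranking'' in the sense of Algorithm~\ref{algo: 2-explore}; the two selection rules, as well as the two starting vertices, therefore agree.

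The substance lies in Claim (A), which I would establish via the ``moreover'' part of Lemma~\ref{lem: prim}, checking properties (i) and (ii) for each component of $G^b$. Because $G(n_b,n_w,p)$ is bipartite, two black vertices lie in the same component of $G^b$ iff they lie in the same component of $G(n_b,n_w,p)$, so the components of $G^b$ are exactly the black parts of the components of $G(n_b,n_w,p)$. Property (i) then follows from Lemma~\ref{lem: prim}(i) applied in $G(n_b,n_w,p)$: the black vertices whose $\pi^b$-rank lies between the extreme $\pi^b$-ranks of a component $C^b$ have Prim ranks inside $\llbracket k_-^C,k_+^C\rrbracket_\sigma=V_C$, hence are black vertices of $C$. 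For property (ii), the existence of $\pi^b$-good paths in $G^b$, I would first record that every prefix $\llbracket k_-^C,m\rrbracket_\sigma$ is connected in $G(n_b,n_w,p)$, an immediate consequence of $\sigma$ being a GEO (Proposition~\ref{prop: prim}) together with the minimality of $k_-^C$. I would then prove a \emph{predecessor property}: each black $v\in C$ other than the lowest-Prim-rank black vertex $b_-$ admits a black $u\in C$ with $\sigma^{-1}(u)<\sigma^{-1}(v)$ and $\{u,v\}\in E^b$. Indeed, when the connected prefix grows past $v$, the vertex $v$ attaches to the earlier prefix through a white vertex $w^\ast$; the black vertex through which $w^\ast$ in turn joined the still earlier prefix --- or $b_-$ itself in the boundary case $w^\ast=\sigma(k_-^C)$, using that a white-only prefix must be a singleton --- supplies such a $u$, sharing the white neighbour $w^\ast$ with $v$. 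Iterating the predecessor map yields a Prim-rank-decreasing chain in $G^b$ that terminates at $b_-=\pi^b(k_-^{C^b})$, and its reversal is the required $\pi^b$-good path.

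The only real care is needed in the predecessor argument, and in particular its bipartite boundary case $w^\ast=\sigma(k_-^C)$; once the auxiliary graph $G^b$ and the identification of its components are in place, the remaining verifications reduce to direct applications of Lemma~\ref{lem: prim} and Proposition~\ref{luka-preserver-graph-explo}.
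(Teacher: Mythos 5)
Your proof is correct, but it is not the argument the paper uses for Proposition~\ref{prop: prim-explore}: the paper proves the statement directly, by taking the maximal index $k_{\ast}$ of agreement between $\sigma^b$ and $k\mapsto\sigma(\tau^b_k)$ and deriving a contradiction in two cases (if $\sigma(\tau^b_{k_{\ast}+1})$ is the lowest-ranking black vertex of its component, the interval representation of components forces $\cA^b_{k_{\ast}}=\varnothing$; otherwise, concatenating two $\sigma$-good paths from Lemma~\ref{lem: prim}\,(ii) produces a black vertex at graph distance $2$ from $\sigma(\tau^b_{k_{\ast}+1})$ of rank at most $\tau^b_{k_{\ast}}$, so that vertex lies in $\cA^b_{k_{\ast}}$ and the algorithm would select it). Your route --- project to the graph $G^b$ on $V^b_n$ with adjacency ``shares a white neighbour'', show the Prim-restricted order $\pi^b$ is a GEO for $G^b$, identify Algorithm~\ref{algo: 2-explore} with Algorithm~\ref{algo: graph} run on $(G^b,\pi^b)$, and invoke Proposition~\ref{luka-preserver-graph-explo} --- is precisely the alternative the authors sketch, without details, in the remark on graph contraction immediately after their proof, where your $G^b$ appears as the contraction $H^b$ (the random intersection graph). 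The one assertion that remark leaves unproved (``one can show that if $\pi$ is a GEO for $G$, then $\pi'$ is a GEO for $H$'') is exactly your Claim (A), and your verification of it is sound: the identification of $G^b$-components with the black parts of $G$-components uses bipartiteness correctly, property (i) follows from Lemma~\ref{lem: prim}\,(i) together with Proposition~\ref{prop: prim}, the connectedness of prefixes $\llbracket k_-^C,m\rrbracket_{\sigma}$ is the same minimality argument as in the proof of Lemma~\ref{lem: prim}, and your predecessor chain handles the white-lead boundary case properly (a white lead forces $b_-=\sigma(k_-^C+1)$, adjacent to the lead, since a connected all-white set is a singleton); Claim (B) is indeed pure bookkeeping via $\cN_{G^b}(A)=\cN^2(A)$ for black $A$. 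Comparing the two: the paper's direct proof is shorter and needs no auxiliary graph, while yours isolates a reusable structural fact (a GEO restricts to a GEO of the contraction), makes the equivalence of the two algorithms transparent rather than implicit, and would generalise beyond the bipartite setting; your predecessor argument is also arguably cleaner than the paper's path-concatenation step, since it extracts the distance-$2$ black neighbour from the attachment structure of the connected prefix rather than from two concatenated good paths.
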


\begin{proof}
Most of the arguments are similar to the ones used in the proof of Proposition~\ref{luka-preserver-graph-explo}; so we only highlight the differences. Suppose that $k_{\ast}=\max\{k: \sigma^b(k)=\sigma(\tau^b_k)\}<n_b$. The choice of $\sigma^b(k_{\ast}+1)$ means that we only need to consider the case where $\cA^b_{k_{\ast}}\ne\varnothing$. 
Denote $v_{\ast}=\sigma(\tau^b_{k_{\ast}+1})$. 

\smallskip
\noindent
{\bf Case 1: } $v_{\ast}$ is the lowest $\sigma$-ranking black vertex in its connected components.  Since connected components are intervals in Prim's order, we deduce that for each $k\le k_{\ast}$, $\sigma^b(k)=\sigma(\tau^b_k)$ is in a different connected component from $\{\sigma^b(k): k> k_{\ast}\}$. It follows that 
\[
 \cA^b_{k_{\ast}} = \bigcup_{k\le k_{\ast}}\mathcal N^2\big(\sigma^b(k)\big) \setminus \big\{\sigma^b(k): k\le k_{\ast}\big\} = \varnothing,
\]
which contradicts the previous assumption.

\smallskip
\noindent
{\bf Case 2: } $v_{\ast}$ is not the lowest $\sigma$-ranking black vertex in its connected components. Let us argue that $v_{\ast}\in \cA^b_{k_{\ast}}$. 
To that end, we first show that $I:=\{k\le k_{\ast}: v_{\ast}\in\cN^2(\sigma^b(k))\}$ is nonempty. 
Let $k_-$ be the lowest $\sigma$-rank of the connected component containing $v_{\ast}$. 
Then by Lemma~\ref{lem: prim} {\it (ii)}, there is a $\sigma$-good path $\rho$ from $\sigma(k_-)$ to $v_{\ast}$. By assumption, we can find another black vertex in the same component as $v_{\ast}$ that has a lower rank, say  $j\le \tau^b_{k_{\ast}}$. Again, there is a $\sigma$-good path $\rho'$ from $\sigma(k_-)$ to $\sigma(j)$. The concatenated path $\rho\cup \rho'$ consists of vertices with ranks $\le \tau^b_{k_{\ast}+1}$ and contains at least one black vertex distinct from $v_{\ast}$. So it must contain a black vertex at (graph) distance 2 from $v_{\ast}$ whose rank $j'\le \tau^b_{k_{\ast}}$. 
By the definition of $k_{\ast}$, we then have $\sigma(j')=\sigma^b(k)$ for some $k\le k_{\ast}$, which shows $I\ne \varnothing$. Since $v_{\ast}\notin\{\sigma^b(k): k\le k_{\ast}\}$, this implies that
\[
v_{\ast}\in \cA^b_{k_{\ast}}=\bigcup_{k\le k_{\ast}}\mathcal N^2\big(\sigma^b(k)\big) \setminus \big\{\sigma^b(k): k\le k_{\ast}\big\}.
\]
Step $k_{\ast}+1$ of the algorithm then asserts $\sigma^b(k_{\ast}+1)=v_{\ast}$, contradicting the definition of $k_{\ast}$. 
\end{proof}

\begin{Req}[Graph contraction and GEO]
We describe here an alternative proof of Proposition~\ref{prop: prim-explore}, based on the concept of graph contraction. If $G = (V, E)$ is a graph and $V' \subseteq V$,  
let $H=(V', E')$ be the graph on the vertex set $V'$ and the edge set $E'$ defined as follows:
\[
E'=\big\{ \{v, u\}: v\in V', u\in V'\setminus\{v\}, \exists\, W\subseteq V\setminus V' \text{ s.t. }\{v, u\}\cup W\text{ is connected in }G\big\}.
\]
We then say $H$ is the contraction of $G$ to $V'$. 
Suppose that $\pi$ is a GEO for $G$ and let $(v_k)_{1\le k\le |V'|}$ be the elements of $V'$ ranked in increasing $\pi$-order. Then $\pi': k\mapsto v_k$, $1\le k\le |V'|$, is an ordering on $V'$. 
One can show, that if $\pi$ is a GEO for $G$, then $\pi'$ is a GEO for $H$. Taking $G=G(n_b, n_w, p)$ and $V'=V^b_n$, we obtain a new graph $H^b$ on the vertex set $V^b_n$, where two black vertices are adjacent to each other if and only if they are adjacent to the same white vertex in $G$. The graph $H^b$ is also called the random intersection graph in the literature~\cite{KaScSC1999}. 
Since $\sigma$ is a GEO for $G(n_b, n_w, p)$, $k\mapsto \sigma(\tau^b_k)$, $k\in [n_b]$, is then a GEO for $H^b$. Finally, we point out that 
Algorithm~\ref{algo: 2-explore} is equivalent to Algorithm~\ref{algo: graph} for $H^b$ in $\sigma\circ\tau^b$-order. Proposition~\ref{luka-preserver-graph-explo} then yields Proposition~\ref{prop: prim-explore}. 
\end{Req}

\bigskip
Recall that 
$\Sigma^b(k) = \Sigma(k)\cap V^b_n$ and $\Sigma^w(k) = \Sigma(k) \cap V^w_n$. 
Instead of $\cA^b_k$, it is sometimes more convenient to study the following set
\[
\cO^b_k:=\cA^b_k\setminus \cA^b_{k-1}=\cN^2\big(\sigma^b(k)\big)\setminus \big(\cA^b_{k-1}\cup \Sigma^b(\tau^b_k)\big), \quad 1\le k\le n_b,
\]
with the convention that $\cA_0=\varnothing$. We will view $\cO^b_k$ as the 2-neighbourhood discovered at step $k$ in Algorithm~\ref{algo: 2-explore}. Note that by definition, $\cO^b_k$, $k\in [n_b]$, are disjoint. Let us introduce another representation of $\cO^b_k$. For $k\in [n_b]$, we consider 
\begin{equation}
\label{def: Kb}
\cK^b_k:=V^b_n\setminus \Big(\bigcup_{j\le k-1}\cO^b_j\,\bigcup \Sigma^b(\tau^b_k)\Big)
\end{equation}
to be the pool of black vertices available for discovery at step $k$; then we have 
\begin{equation}
\label{def: Obk}
\cO^b_k = \cN^2(\sigma^b(k))\cap \cK^b_k. 
\end{equation}
To see why~\eqref{def: Obk} is true, we claim that
\begin{equation}
\label{id: AB1}
\cA^b_{k-1}\cup\Sigma^b(\tau^b_k) = \bigcup_{j\in [k-1]}\cN^2(\sigma^b(j))\,\bigcup\,\Sigma^b(\tau^b_k)= \bigcup_{j\in [k-1]}\cO^b_j \,\bigcup\,\Sigma^b(\tau^b_k) = V^b_n\setminus \cK^b_k,
\end{equation}
Indeed, we have 
\begin{equation}
\label{id: Abk}
\cA^b_{k-1}=\cN^2\big(\Sigma^b(\tau^b_{k-1})\big) = \bigcup_{j\in [k-1]}\cN^2(\sigma^b(j))\,\setminus\,\Sigma^b(\tau^b_{k-1}).
\end{equation}
The first identity in~\eqref{id: AB1} follows. Then, 
\[
\cO^b_k = \cN^2(\sigma^b(k))\setminus\big(\cA^b_{k-1}\cup \Sigma^b(\tau^b_k)\big)=\cN^2(\sigma^b(k))\setminus\big( \bigcup_{j\in [k-1]}\cN^2(\sigma^b(j))\,\bigcup\, \Sigma^b(\tau^b_k)\big).
\]
Taking the union over $k$ yields the second identity in~\eqref{id: AB1}. The third one there now follows from~\eqref{def: Kb}. Finally, we deduce~\eqref{def: Obk} from~\eqref{id: AB1}. 

Next, let us define 
\[
\cK^w_1=V^w_n\setminus \Sigma^w(\tau^b_1);
\]
more generally for $2\le k\le n_b+1$, we set
\begin{equation}
\label{def: Ow}
\cO^w_{k-1} = \cN\big(\sigma^b(k-1)\big)\cap \mathcal K^w_{k-1}\quad\text{and}\quad \mathcal K^w_k=\mathcal K^w_{k-1}\setminus\big( \cO^w_{k-1}\cup \Sigma^w(\tau^b_k)\big).
\end{equation}
In analogue to $\cK^b_k$ and $\cO^b_k$, $\mathcal K^w_{k}$ is the pool of available white vertices at step $k$  and $\cO^w_k$ the discovered 1-neighbourhood of $\sigma^b(k)$, which consists of white vertices only. 
The sets $\cO^w_k, k\in [n_b]$, are also disjoint. 

\section{Bond percolation of the complete bipartite graph}
\label{sec: percolation}

As indicated in Proposition~\ref{prop: prim}, there is a close connection between the Prim sequence and the bond percolation of $K_{n_b, n_w}$. We collect here various properties of the bond percolation that are useful for our proofs. Broadly speaking, the results below mirror those for the bond percolation of the complete graph $K_n$, or equivalently the graph process $(G(n, p))_{p\in [0, 1]}$.

We will use the following notation for probability distributions. 
For $n\in \N$ and $p \in [0, 1]$, we denote by $\cB(n, p)$ the Binomial distribution with parameters $n$ and $p$. It will be convenient to write $\cB(0, p)$ for the law of the degenerate random variable $X\equiv 0$. We also denote by Poisson$(c)$ the Poisson distribution with mean $c\in (0, \infty)$. 

\subsection{A two-type branching process}

Let $p\in (0, 1)$ and recall the graph $G(n_b, n_w, p)=(V_n, E_n(p))$ with the edge set $E_n(p)$ from~\eqref{def: Enp}. It is not difficult to see that each black vertex in $G(n_b, n_w, p)$ is adjacent to a random number of white vertices with a $\cB(n_w, p)$ distribution, and each white vertex is adjacent to a random number of black vertices with a $\cB(n_b, p)$ distribution. 
From now on, we focus on the sparse regime: fix $\lambda\in (0, \infty)$ and let
\begin{equation}
\label{hyp: p}
p= \frac{\lambda}{\sqrt{n_b n_w}}, \quad n\ge 1.
\end{equation}
It turns out that the local limit of $G(n_b, n_w, p)$, as $n\to\infty$, is closely connected to the following two-type branching process. Let $\{\mathbf Z_k=(Z^{(1)}_k, Z^{(2)}_k): k\ge 0\}$ be a $\Z_+^2$-valued Markov chain that evolves as follows.
For all $k\ge 1$, given $\mathbf Z_k=(z_1, z_2)\in \Z_+^2$, we have 
\[
Z^{(1)}_{k+1} = \sum_{i=1}^{z_2} Y^{(k, 1)}_i, \quad Z^{(2)}_{k+1} = \sum_{i=1}^{z_1} Y^{(k, 2)}_i,
\]
where $(Y^{(k, 1)}_i)_{i\ge 1}, (Y^{(k, 2)}_i)_{i\ge 1}$ are two independent families of i.i.d.~random variables with 
\[
Y^{(k, 1)}_1\sim \text{ Poisson} (\lambda\gamma_{\theta}^{-1}), \quad Y^{(k, 2)}_1\sim \text{ Poisson}(\lambda\gamma_{\theta}),
\] 
where $\gamma_{\theta}=\sqrt{(1-\theta)/\theta}$. 
For $i, j\in\{1, 2\}$, denote $m_{i, j}=\mathbb E[Z^{(j)}_1\,|\, Z^{(i)}_0=1]$. Straightforward computations show that 
\[
M:=
\begin{pmatrix}
m_{1,1} & m_{1, 2}\\
m_{2, 1} & m_{2, 2}
\end{pmatrix}
=\begin{pmatrix}
0 & \lambda\gamma_{\theta}\\
\lambda\gamma_{\theta}^{-1} & 0
\end{pmatrix}.
\]
has eigenvalues $\lambda, -\lambda$ and $(1, \gamma_{\theta})$ is a left eigenvector for $\lambda$. 
The extinction probabilities are defined as
\begin{equation}
\label{def: qi}
q_1(\lambda) = \mathbb P\big(\exists\, k\ge 1: \mathbf Z_k=(0, 0) \,\big|\, \mathbf Z_0=(1, 0)\big), \quad q_2(\lambda) = \mathbb P\big(\exists\, k\ge 1: \mathbf Z_k=(0, 0) \,\big|\, \mathbf Z_0=(0, 1)\big). 
\end{equation}

\begin{The}
\label{thm: br}
Let $\gamma_{\theta}=\sqrt{\frac{1-\theta}{\theta}}$. 
The extinction probabilities are given as follows.
\begin{enumerate}[(i)]
\item
(Sub)critical regime: if $\lambda\le 1$, then $q_1(\lambda)=q_2(\lambda)=1$. 
\item
Supercritical regime: if $\lambda >1$, then $(q_1(\lambda), q_2(\lambda))$ is the unique solution in $(0, 1)^2$ of the fixed-point equation:
\begin{equation}
\label{eq: fixedpt}
\left\{
\begin{array}{l}
x = e^{\lambda \gamma_{\theta}(y - 1)} \\
y = e^{\lambda \gamma_{\theta}^{-1}(x - 1)}
\end{array}\right..
\end{equation}
Furthermore, the only other solution of~\eqref{eq: fixedpt} is $(1, 1)$. 
\end{enumerate}
\end{The}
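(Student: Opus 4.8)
The plan is to bypass the general extinction theory for positively regular multi-type processes---which, as I explain below, does not directly apply here---and instead analyse the fixed-point system \eqref{eq: fixedpt} by hand. First I would record the one-step probability generating functions of the offspring. Starting from a single type-$1$ individual, which spawns a $\text{Poisson}(\lambda\gamma_\theta)$ number of type-$2$ children and no type-$1$ child, the generating function is $f_1(x, y)=e^{\lambda\gamma_\theta(y-1)}$; symmetrically $f_2(x,y)=e^{\lambda\gamma_\theta^{-1}(x-1)}$. By the standard characterisation of extinction for Galton--Watson processes, the vector $(q_1(\lambda), q_2(\lambda))$ is the increasing limit of the iterates of $(f_1, f_2)$ started from $(0,0)$, hence the \emph{smallest} solution in $[0,1]^2$ of $x=f_1(x,y)$, $y=f_2(x,y)$, which is precisely \eqref{eq: fixedpt}. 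This characterisation requires no positive-regularity assumption, so the whole theorem reduces to locating the solutions of \eqref{eq: fixedpt} in $[0,1]^2$.

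Next I would eliminate one variable. Substituting $y=e^{\lambda\gamma_\theta^{-1}(x-1)}$ into the first equation turns the system into a single scalar fixed-point equation $x=\phi(x)$, where
\[
\phi(x)=\exp\!\big(\lambda\gamma_\theta\big(e^{\lambda\gamma_\theta^{-1}(x-1)}-1\big)\big).
\]
The map $x\mapsto (x,\,e^{\lambda\gamma_\theta^{-1}(x-1)})$ is a bijection between fixed points of $\phi$ in $(0,1]$ and solutions of \eqref{eq: fixedpt} in $[0,1]^2$; a direct check rules out any solution with a zero coordinate. I would then establish three elementary facts about $\phi$ on $[0,1]$: it is a composition of two strictly increasing, strictly convex functions, hence itself strictly increasing and strictly convex; it fixes the point $1$, i.e.\ $\phi(1)=1$; and differentiating gives $\phi'(x)=\lambda^2\,\phi(x)\,e^{\lambda\gamma_\theta^{-1}(x-1)}$, so that $\phi'(1)=\lambda^2$. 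One also notes $\phi(0)>0$.

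The case analysis is then driven entirely by the value of $\phi'(1)=\lambda^2$ against $1$, using strict convexity. In the (sub)critical regime $\lambda\le 1$ we have $\phi'(1)\le 1$; since $\phi$ lies above its tangent line at $1$, namely $\phi(x)\ge 1+\lambda^2(x-1)$, and $1+\lambda^2(x-1)\ge x$ for $x\le 1$, strict convexity upgrades this to $\phi(x)>x$ for all $x\in[0,1)$, so $x=1$ is the unique fixed point and $(q_1,q_2)=(1,1)$. In the supercritical regime $\lambda>1$, writing $g:=\phi-\mathrm{id}$, we have $g'(1)=\lambda^2-1>0$, so $g<0$ just to the left of $1$, while $g(0)=\phi(0)>0$; the intermediate value theorem yields a root $x^\ast\in(0,1)$, and strict convexity of $g$ limits it to at most two zeros, which are therefore exactly $x^\ast$ and $1$. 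Thus the smallest solution is $x^\ast$, giving $(q_1(\lambda),q_2(\lambda))=(x^\ast,\,e^{\lambda\gamma_\theta^{-1}(x^\ast-1)})\in(0,1)^2$, with $(1,1)$ the only other solution.

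The one genuine subtlety---and the reason I would not simply quote the classical extinction dichotomy---is that the offspring structure here is \emph{periodic}: type $1$ produces only type $2$ and vice versa, so $M$ has zero diagonal and no power of $M$ is strictly positive, i.e.\ positive regularity fails. The direct convexity argument sidesteps this, but it must handle the critical boundary $\lambda=1$ with care: there $\phi'(1)=1$, the tangent line at $1$ is exactly the diagonal, and only the \emph{strict} convexity of $\phi$ excludes a second fixed point and forces $q_1=q_2=1$ rather than a spurious extra root. Verifying strict convexity of the composition $\phi$ (equivalently $\phi''>0$ on $[0,1]$) is the small technical point on which the critical case rests.
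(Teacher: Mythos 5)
Your proof is correct, and it takes a recognisably different route from the paper's, although both pivot on the same scalar equation. The paper deals with the $2$-periodicity of $(\mathbf Z_k)_{k\ge 0}$ by collapsing two generations: it observes that $(Z^{(1)}_{2k})_{k\ge 0}$ is an ordinary single-type Galton--Watson process whose offspring law is Poisson$(\lambda\gamma_\theta)$-mixed Poisson, so that its probability generating function is exactly your $\phi$; it then quotes classical single-type theory to get the root structure of the scalar equation~\eqref{eq: fixpt'} (unique root $1$ for $\lambda\le 1$, exactly two roots for $\lambda>1$), and must afterwards spend a separate identification step showing $q_i(\lambda)=\tilde q_i(\lambda)$, i.e.\ that extinction of the full chain coincides with extinction of the even-indexed subsequence. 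You avoid both the collapse and the identification by invoking the minimal-fixed-point characterisation of the multitype extinction vector, and you are right that this characterisation (monotone iteration of the offspring pgf from $\mathbf 0$) needs no positive regularity, which is precisely the hypothesis the periodic mean matrix $M$ violates; the price you pay is having to establish the root structure of $\phi$ by hand, which you do correctly via $\phi(1)=1$, $\phi'(1)=\lambda^2$, strict convexity, the tangent-line bound, and the intermediate value theorem, including the delicate critical boundary $\lambda=1$ where only \emph{strict} convexity rules out a second fixed point. Your remark excluding solutions with a zero coordinate, and the bijection between fixed points of $\phi$ and solutions of~\eqref{eq: fixedpt}, play the role of the paper's observation that any solution $(\hat q_1,\hat q_2)$ of~\eqref{eq: fixedpt} has $\hat q_1$ solving~\eqref{eq: fixpt'} and $\hat q_1\in(0,1)$ if and only if $\hat q_2\in(0,1)$. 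In short: your argument is more self-contained analytically and cleaner probabilistically, while the paper's outsources the convexity analysis to classical Galton--Watson theory at the cost of the two-generation bookkeeping.
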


\begin{proof}
As $(\mathbf Z_k)_{k\ge 0}$ is 2-periodic, classic results on the multi-type branching processes such as Theorem 2 in Chapter V.2~\cite{AN_book} do not apply directly. However, we note that $(Z^{(1)}_{2k})_{k\ge 0}$ is an ordinary branching process where the offspring distribution is given by Poisson$(Y\cdot \lambda \gamma_{\theta}^{-1})$ with $Y\sim$ Poisson$(\lambda\gamma_{\theta})$. 
Moreover, this branching process is supercritical if and only if $\lambda>1$. 
Denote its extinction probability as
\[
\tilde q_1(\lambda):=\mathbb P\big(\exists\,k\ge 1: Z^{(1)}_{2k}=0\,\big|\,\mathbf Z_0=(1,0)\big).
\] 
Then for each $\lambda>0$,  $\tilde q_1(\lambda)$ is the smallest solution of the equation:
\begin{equation}
\label{eq: fixpt'}
s=\exp\Big\{\lambda \gamma_{\theta}\Big(e^{\lambda \gamma_{\theta}^{-1}(s-1)}-1\Big)\Big\}.
\end{equation}
Criticality of the branching process then implies that for $\lambda\in (0, 1]$,  the equation has a unique solution with $\tilde q_1(\lambda)=1$, while for $\lambda>1$, the equation has two solutions, $1$ and 
$\tilde q_1(\lambda)\in (0, 1)$. Set 
\[
\tilde q_2(\lambda) = \exp\big\{\lambda\gamma_{\theta}^{-1}(\tilde q_1(\lambda)-1)\big\};
\]
then $(\tilde q_1(\lambda), \tilde q_2(\lambda))$ solves~\eqref{eq: fixedpt}.  On the other hand, if $(\hat q_1(\lambda), \hat q_2(\lambda))$ is a solution of~\eqref{eq: fixedpt}, then necessarily $\hat q_1(\lambda)$ solves~\eqref{eq: fixpt'}. We further have $\hat q_1(\lambda)\in (0, 1)$ if and only if $\hat q_2(\lambda)\in (0, 1)$. We deduce that for $\lambda\in (0, 1]$,~\eqref{eq: fixedpt} has a unique solution $(1, 1)$, while for $\lambda>1$,~\eqref{eq: fixedpt} has two sets of solutions, given by $(1, 1)$ and $(\tilde q_1(\lambda), \tilde q_2(\lambda))$. 

It remains to show that $q_i(\lambda)=\tilde q_i(\lambda)$, $i\in \{1, 2\}$. Indeed, we note that $(Z^{(2)}_{2k})_{k\ge 0}$ is also a branching process. It follows that $Z^{(2)}_0=0$ implies $Z^{(0)}_{2k}=0$ for all $k\ge 0$. Comparing the definitions of $q_1(\lambda)$ with $\tilde q_1(\lambda)$, we see that $\tilde q_1(\lambda)\le q_1(\lambda)$. On the other hand,  $\mathbf Z_k=(0, 0)$ implies $\mathbf Z_{k'}=(0, 0)$ for all $k'\ge k$; hence $Z_{2\ell}^{(1)}=0$ for all $2\ell\ge k$. We then find $q_1(\lambda)=\tilde q_1(\lambda)$, which shows $\tilde q_1(\lambda)=q_1(\lambda)$. 
The case of $q_2(\lambda)$ can be similarly argued. 
\end{proof}

\subsection{Size of the giant component}

The {\it size} of a connected component $\mathcal C$ of $G(n_b, n_w, p)$ refers to the number of vertices contained in $\mathcal C$. Let $\mathcal C_n(\lambda)$ be the largest connected component of $G(n_b, n_w, p)$ with $p=\lambda/\sqrt{n_b n_w}$, breaking ties arbitrarily. 
In parallel to the classical results on the Erd\H{o}s--R\'enyi graph, the random bipartite graph $G(n_b, n_w, p)$ also exhibits a phase transition in the size of $\mathcal C_n(\lambda)$ as $n\to\infty$, with a giant component emerging right after $\lambda=1$, matching the critical point of the previous branching process $\{\mathbf Z_k: k\ge 0\}$ (~\cite{Johansson12}). 
Denote by $C^{\mathrm b}_n(\lambda)$ and $C^{\mathrm w}_n(\lambda)$ the respective numbers of black and white vertices in $\mathcal C_n(\lambda)$. The following result on the limit of $(C^{\mathrm b}_n(\lambda), C^{\mathrm w}_n(\lambda))$ is a key ingredient of our proof in the linear regime. 

\begin{The}[\cite{Johansson12}, Theorem 12]
\label{thm: giant-size}
Assume~\eqref{hyp: theta} and~\eqref{hyp: p} and let $\lambda>1$. Recall $q_1(\lambda), q_2(\lambda)$ from~\eqref{def: qi}. The following convergences in probability hold as $n\to\infty$: 
\[
\frac{C^{\mathrm b}_{n}(\lambda)}{n} \to \theta\big(1-q_1(\lambda)\big) \quad \text{and}\quad \frac{C^{\mathrm w}_n(\lambda)}{n} \to (1-\theta)\big(1-q_2(\lambda)\big). 
\]
\end{The}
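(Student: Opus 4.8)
The plan is to follow the classical template for giant-component results---originally developed for the Erd\H{o}s--R\'enyi graph---adapted to the two-type structure of $G(n_b, n_w, p)$ with $p=\lambda/\sqrt{n_b n_w}$ as in~\eqref{hyp: p}. It proceeds in three stages: a branching-process approximation of component explorations, first- and second-moment control of the number of vertices in large components, and a sprinkling argument establishing uniqueness of the giant.

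First, I would explore the component of a fixed vertex by revealing, step by step, the as-yet-undiscovered neighbours of each explored vertex. Under~\eqref{hyp: theta} and~\eqref{hyp: p}, a black vertex has a $\cB(n_w, p)$ number of white neighbours with $n_w p\to \lambda\gamma_{\theta}$, and a white vertex has a $\cB(n_b, p)$ number of black neighbours with $n_b p\to\lambda\gamma_{\theta}^{-1}$. As long as only $o(n)$ vertices have been discovered, the depletion of available vertices is negligible and these binomials couple, with total-variation error $O(1/\sqrt n)$ per step, to the Poisson offspring laws of the two-type chain $\{\mathbf Z_k\}$, started from $(1,0)$ for a black root and $(0,1)$ for a white root. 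Hence, for each fixed $K$, the probability that the root lies in a component of size at least $K$ converges to $\mathbb P(\text{total progeny of }\mathbf Z\ge K)$; letting $K\to\infty$, this tends to the survival probability $1-q_1(\lambda)$ (black root) or $1-q_2(\lambda)$ (white root), with $q_1, q_2$ as in Theorem~\ref{thm: br}.

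Next, write $L^{\mathrm b}_K$ for the number of black vertices whose component has size at least $K$. The previous step gives $\mathbb E[L^{\mathrm b}_K]/n_b\to\mathbb P(\text{total progeny from }(1,0)\ge K)$, which approaches $1-q_1(\lambda)$ as $K\to\infty$. For concentration, I would run two explorations from a pair of distinct vertices, stopping each as soon as it dies out or reaches $K$ vertices, so that each discovers at most $K=o(n)$ vertices; the residual graph seen by the second exploration is still essentially $G(n_b, n_w, p)$, which yields asymptotic independence and $\Var(L^{\mathrm b}_K)=o(n^2)$. Therefore $L^{\mathrm b}_K/n\to\theta(1-q_1(\lambda))$ in probability, and symmetrically $L^{\mathrm w}_K/n\to(1-\theta)(1-q_2(\lambda))$.

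Finally, to identify these counts with $C^{\mathrm b}_n(\lambda)$ and $C^{\mathrm w}_n(\lambda)$, I must show that, with high probability, all but $o(n)$ of the vertices in components of size $\ge K$ lie in one and the same component; this uniqueness of the giant is where the main difficulty lies. I would establish it by sprinkling: splitting $p=p'+\delta$ with $\delta$ of the same order as $p$, one checks that any two components of $G(n_b, n_w, p')$ of linear size are joined with high probability by the independent edges added at density $\delta$, so that $G(n_b, n_w, p)$ has a unique component of size $\ge\epsilon n$; a complementary estimate, again via the branching approximation, bounds by $o(n)$ the number of vertices sitting in components of intermediate size between $K$ and $\epsilon n$. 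Granting this, $C^{\mathrm b}_n(\lambda)=L^{\mathrm b}_K-o(n)$ and likewise for the white count, whence the two stated limits follow from the moment estimates above. The only genuine departure from the Erd\H{o}s--R\'enyi argument is the bipartite bookkeeping---tracking the two colours and the asymmetric offspring means $\lambda\gamma_{\theta}$ and $\lambda\gamma_{\theta}^{-1}$ throughout the coupling and the sprinkling.
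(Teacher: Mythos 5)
There is nothing in the paper to compare your argument against: Theorem~\ref{thm: giant-size} is not proved in this paper at all --- it is imported verbatim from \cite{Johansson12} (Theorem 12 there), which is why the authors state it with a citation and immediately put it to use in Section~\ref{sec: linear}. Your sketch is therefore best judged on its own terms, and in outline it is the standard and correct template: exploration of components coupled to the two-type branching process $\{\mathbf Z_k\}$ (your offspring means $n_wp\to\lambda\gamma_\theta$ and $n_bp\to\lambda\gamma_\theta^{-1}$ match the paper's matrix $M$ exactly, so the survival probabilities are $1-q_1(\lambda)$ and $1-q_2(\lambda)$ as in Theorem~\ref{thm: br}), first-moment identification of $\mathbb E[L^{\mathrm b}_K]/n_b$, variance control via two truncated explorations, and sprinkling for uniqueness. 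This is essentially the same machinery that underlies the known proofs (the result is also a special case of the Bollob\'as--Janson--Riordan theory of inhomogeneous random graphs, the bipartite graph being a two-type kernel), so your route is legitimate rather than a genuine departure.

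One step, as literally stated, would fail and needs repair: the claim that \emph{any} two components of $G(n_b,n_w,p')$ of linear size are joined by the $\delta$-sprinkled edges. Sprinkled edges only run between black and white vertices, so if a linear component were badly colour-unbalanced --- say $\varepsilon n$ black vertices but only $O(1)$ white ones, as in a star --- the expected number of sprinkled edges between two such components is $O(n^2\delta/n)=O(1)$ for the relevant pairs, not $\omega(1)$. You must first show every component of size $\ge\varepsilon n$ contains many vertices of \emph{each} colour; this follows, e.g., from a whp maximum-degree bound of $O(\log n)$, which forces at least $c\varepsilon n/\log n$ vertices of the minority colour, and then the expected number of sprinkled cross-edges between two big components is of order $\delta\cdot n\cdot n/\log n\to\infty$, restoring the argument. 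A similar caution applies to your ``intermediate-size'' estimate: ruling out components of size between $K$ and $\varepsilon n$ is not a direct consequence of the local coupling (which only sees bounded neighbourhoods) and requires its own argument, typically a second sprinkling or a ballot-type estimate on the exploration walk. With those two points filled in, your proposal is a sound proof of the cited theorem.
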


We have seen in Theorem~\ref{thm: br} that $q_i(\lambda), i=1, 2$, are solutions of a fixed-point equation. This allows us to deduce the following properties. 
\begin{Lem}
\label{lem: solv'}
Let $\gamma_{\theta}=\sqrt{\frac{1-\theta}{\theta}}$. The following statements are true. 
\begin{enumerate}[(i)]
\item
For $i\in \{1, 2\}$, $q_i(\lambda)$ is differentiable and strictly decreasing on $(1, \infty)$; moreover, $q_i(1+)=1$ and  $q_i(\infty)=0$.
\item
As $\lambda\to 1+$, we have $q_1(\lambda)-1 = \gamma_{\theta} (q_2(\lambda)-1) + o\big(|q_2(\lambda)-1|\big)$, 
\end{enumerate} 
\end{Lem}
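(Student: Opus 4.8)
The plan is to treat $(q_1(\lambda), q_2(\lambda))$ as an implicitly defined branch of solutions of the fixed-point system~\eqref{eq: fixedpt} and to read off every stated property from it. Writing the system as $F(q_1, q_2, \lambda)=0$ with
\[
F(x, y, \lambda) = \big( x - e^{\lambda\gamma_{\theta}(y-1)}, \; y - e^{\lambda\gamma_{\theta}^{-1}(x-1)}\big),
\]
I would establish part (i) via the implicit function theorem. The Jacobian of $F$ in $(x,y)$ at a solution $(q_1, q_2)$ is
\[
\begin{pmatrix} 1 & -\lambda\gamma_{\theta} q_1 \\ -\lambda\gamma_{\theta}^{-1} q_2 & 1 \end{pmatrix}, \qquad \text{with determinant } 1 - \lambda^2 q_1 q_2,
\]
where I used $e^{\lambda\gamma_{\theta}(q_2-1)}=q_1$ and $e^{\lambda\gamma_{\theta}^{-1}(q_1-1)}=q_2$. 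The crux is that this determinant is strictly positive. Indeed, recall from the proof of Theorem~\ref{thm: br} that $q_1(\lambda)$ is the extinction probability of the single-type supercritical process with generating function $\phi(s)=\exp\{\lambda\gamma_{\theta}(e^{\lambda\gamma_{\theta}^{-1}(s-1)}-1)\}$; a direct computation gives $\phi'(q_1)=\lambda^2 q_1 q_2$, and since $q_1$ is the smallest fixed point of the convex map $\phi$ in the supercritical regime, $\phi'(q_1)<1$. Hence $1-\lambda^2 q_1 q_2>0$, the Jacobian is invertible, and $\lambda\mapsto(q_1(\lambda),q_2(\lambda))$ is differentiable on $(1,\infty)$.

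For monotonicity I would differentiate the system. Writing $a=-\gamma_{\theta}(q_2-1)q_1$ and $b=-\gamma_{\theta}^{-1}(q_1-1)q_2$ for the $\partial_\lambda$ of the two component maps (both positive since $q_1,q_2<1$), the implicit function theorem yields
\[
q_1'(\lambda)=-\frac{a+\lambda\gamma_{\theta} q_1\, b}{1-\lambda^2 q_1 q_2}, \qquad q_2'(\lambda)=-\frac{b+\lambda\gamma_{\theta}^{-1} q_2\, a}{1-\lambda^2 q_1 q_2},
\]
and every quantity in the numerators and in the denominator is positive, so $q_1',q_2'<0$. For the boundary values, monotonicity and boundedness guarantee that $q_i(1+)$ and $q_i(\infty)$ exist; passing to the limit in~\eqref{eq: fixedpt} as $\lambda\to1+$, the limit solves the critical system, whose only solution in $[0,1]^2$ is $(1,1)$ by Theorem~\ref{thm: br}(i), giving $q_i(1+)=1$; and letting $\lambda\to\infty$ in $q_1=e^{\lambda\gamma_{\theta}(q_2-1)}$ with $q_2(\infty)<1$ forces $q_1(\infty)=0$, after which $q_2=e^{\lambda\gamma_{\theta}^{-1}(q_1-1)}$ forces $q_2(\infty)=0$.

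For part (ii) I would take logarithms in~\eqref{eq: fixedpt} and set $u=q_1(\lambda)-1$, $v=q_2(\lambda)-1$, both in $(-1,0)$, to obtain $\log(1+u)=\lambda\gamma_{\theta} v$ and $\log(1+v)=\lambda\gamma_{\theta}^{-1} u$. Dividing these and using $\log(1+t)/t\to1$ as $t\to0$ (legitimate since $u,v\to0$ as $\lambda\to1+$ by part (i)) gives
\[
(1+o(1))\,\frac{u}{v} = \gamma_{\theta}^2\,\frac{v}{u}, \qquad\text{so}\qquad \Big(\frac{u}{v}\Big)^2 = \gamma_{\theta}^2\,(1+o(1)).
\]
Since $u,v<0$, the ratio $u/v$ is positive, whence $u/v\to\gamma_{\theta}$, i.e.\ $q_1(\lambda)-1=\gamma_{\theta}(q_2(\lambda)-1)+o(|q_2(\lambda)-1|)$.

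The main obstacle is the strict positivity of the Jacobian determinant $1-\lambda^2 q_1 q_2$ underpinning part (i): identifying $\lambda^2 q_1 q_2$ with the derivative $\phi'(q_1)$ of the reduced single-type generating function at its extinction fixed point is what makes this transparent, and it is the one place where I lean on the branching-process structure of Theorem~\ref{thm: br} rather than on the fixed-point system alone. Everything else, including the near-critical expansion in part (ii), is then a routine if careful manipulation.
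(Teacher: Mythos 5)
Your proof is correct, and it follows a recognizably different implementation of the same underlying idea. The paper eliminates $q_2$ and works with the scalar equation $F(x,\lambda)=\lambda\gamma_{\theta}\bigl(e^{\lambda\gamma_{\theta}^{-1}(x-1)}-1\bigr)-\log x=0$: convexity of $x\mapsto F(x,\lambda)$ shows $q_1(\lambda)$ is the smaller of two roots, hence $\partial_x F(q_1(\lambda),\lambda)<0$, and the one-dimensional implicit function theorem plus a sign check on $\partial_\lambda F$ gives differentiability and strict monotonicity of $q_1$, with $q_2$ "similarly argued". You instead keep the coupled system and apply the two-dimensional implicit function theorem; your key inequality $1-\lambda^2 q_1 q_2>0$ is in fact literally equivalent to the paper's, since $F(x,\lambda)=\log\phi(x)-\log x$ gives $\partial_x F(q_1,\lambda)=(\phi'(q_1)-1)/q_1$ with $\phi'(q_1)=\lambda^2 q_1 q_2$ — so your identification of the Jacobian determinant with $1-\phi'(q_1)$ and the standard supercritical fact $\phi'(q_1)<1$ (strict convexity of the PGF, smallest fixed point) is the same convexity input, packaged differently. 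What your route buys is symmetry: you get $q_1'<0$ and $q_2'<0$ simultaneously from one computation, rather than repeating the argument, and your limit arguments at $\lambda\to1+$ and $\lambda\to\infty$ match the paper's. For (ii), the paper simply expands the first equation $q_1=e^{\lambda\gamma_{\theta}(q_2-1)}$ with $e^x=1+x+O(x^2)$ and $\lambda\to1$, which is a one-line version of your symmetric log-division argument; both are valid. Two small points of hygiene: your citation of Theorem~\ref{thm: br}(i) for uniqueness of the critical fixed point $(1,1)$ is slightly loose — that theorem's statement concerns extinction probabilities, and the uniqueness at $\lambda=1$ is established inside its proof (the paper's "elementary arguments" that $F(\cdot,1)$ has a unique root at $1$); and, as in the paper's own argument, one should note that the locally continued IFT branch coincides with $(q_1(\lambda),q_2(\lambda))$, which follows from Theorem~\ref{thm: br}(ii) since the branch stays in the open square $(0,1)^2$ where the solution is unique. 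Neither affects correctness.
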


\begin{proof}
For (i), we will only prove the properties for $q_1(\lambda)$; the case of $q_2(\lambda)$ can be similarly argued. Recall that for $\lambda>1$, $q_1(\lambda)$ is the unique solution in $[0, 1)$ of the fixed-point equation~\eqref{eq: fixpt'}. Let us introduce
\begin{equation}
\label{def: F}
F(x, \lambda) = \lambda \gamma_{\theta}\Big(e^{\lambda \gamma_{\theta}^{-1}(x-1)}-1\Big)-\log x, \quad x>0, \; \lambda>1.
\end{equation}
Then $F(q_1(\lambda), \lambda)=0$. 
We note that $x\mapsto F(x, \lambda)$ is convex with $F(0+, \lambda)=\infty$ and $F(1, \lambda)=0$. Since $\lambda>1$, we have $q_1(\lambda)\in (0, 1)$. Therefore, for each $\lambda>1$, $F(x, \lambda)$ has two roots with $q_1(\lambda)$ being the smaller one. It follows $\partial_x F(q_1(\lambda), \lambda)<0$. By the implicit function theorem, $q_1(\lambda)$ is differential in $\lambda\in (1, \infty)$ and we further have
\[
q'_1(\lambda) = -\frac{\partial_{\lambda} F\big(q_1(\lambda), \lambda\big)}{\partial_x F\big(q_1(\lambda), \lambda\big)} = -\frac{\gamma_{\theta}(e^{\lambda \gamma_{\theta}^{-1}(q_1(\lambda)-1)}-1)+\lambda (q_1(\lambda)-1)e^{\lambda \gamma_{\theta}^{-1}(q_1(\lambda)-1)}}{\partial_x F\big(q_1(\lambda),\lambda\big)}<0,
\]
for all $\lambda>1$. This proves the monotonicity. In particular, $q_1(\infty):=\lim_{\lambda\to \infty}q_1(\lambda)$ exists and is a real number in $[0, 1)$. Meanwhile, we have
\begin{equation}
\label{eq: ft-log}
\log\big(q_1(\lambda)\big) = \lambda \gamma_{\theta}\Big(e^{\lambda \gamma_{\theta}^{-1}(q_1(\lambda)-1)}-1\Big).
\end{equation}
By the monotone convergence theorem, the right-hand side above tends to $-\infty$ as $\lambda\to \infty$, which implies $q_1(\infty)=0$. 
Next, taking $\lambda\to 1+$ on the both sides of~\eqref{eq: ft-log} leads to 
\[
\log\big(q_1(1+)\big) = \gamma_{\theta}\Big(e^{\gamma_{\theta}^{-1}(q_1(1+)-1)}-1\Big).
\]
Elementary arguments show that the function $F(x, 1)$
has a unique root at 1. Hence, $q_1(1+)=1$, which concludes the proof of {\it (i)}. 

We have seen in {\it (i)} that both $q_i$ are right-continuous at $\lambda=1$. Since they satisfy~\eqref{eq: fixedpt}, combining the first equation with the expansion $e^x=1+x+O(x^2)$ as $x\to 0$ implies {\it (ii)}.

\end{proof}

Let us introduce 
\begin{equation}
\label{def: ell}
\ell(\lambda) = \theta\big(1-q_1(\lambda)\big) + (1-\theta)\big(1-q_2(\lambda)\big), \quad \rho(\lambda) = \frac{\theta\big(1-q_1(\lambda)\big)}{\ell(\lambda)}.
\end{equation}
\begin{Lem}
\label{lem: solv}
The following statements hold true.
\begin{enumerate}[(i)]
\item
For each $\lambda>1$, $(\ell(\lambda), \rho(\lambda))$ is the unique solution in $(0, \infty)^2$ of~\eqref{eq: fixpt-thm}; moreover, $\ell(\lambda)\in (0, 1)$ and $\rho(\lambda)\in (0, 1)$. 
\item
The map $\lambda\mapsto \ell(\lambda)$ is differentiable and strictly increasing on $(1, \infty)$. Moreover, $\lim_{\lambda\to \infty} \ell(\lambda)=1$ and $\lim_{\lambda\to 1+}\ell(\lambda) = 0$. As a result, $\ell: (1, \infty)\to (0, 1)$ is a bijection. 
\item
As $\lambda\to 1+$, we have $ \rho(\lambda) \to (1+\gamma_{\theta})^{-1}$. 
\end{enumerate}
\end{Lem}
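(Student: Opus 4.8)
The plan is to regard $(\ell(\lambda),\rho(\lambda))$ as a reparametrisation of the extinction probabilities $(q_1(\lambda),q_2(\lambda))$ and to transport the relevant properties of the latter, furnished by Theorem~\ref{thm: br} and Lemma~\ref{lem: solv'}, through the change of variables in~\eqref{def: ell}. The algebraic engine throughout is the elementary identity $\gamma_{\theta}/(1-\theta)=\gamma_{\theta}^{-1}/\theta=\alpha_{\theta}$, immediate from $\gamma_{\theta}=\sqrt{(1-\theta)/\theta}$ and $\alpha_{\theta}=(\theta(1-\theta))^{-1/2}$.

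For the existence half of (i), I would start from the two relations $\rho\ell=\theta(1-q_1)$ and $(1-\rho)\ell=(1-\theta)(1-q_2)$, which are immediate from~\eqref{def: ell}. Writing $q_2-1=-(1-\rho)\ell/(1-\theta)$ and inserting this into the first line of~\eqref{eq: fixedpt} gives, after the identity above, $q_1=\exp(-\alpha_{\theta}\lambda(1-\rho)\ell)$, whence $\rho\ell=\theta(1-q_1)=\theta\bigl(1-e^{-\alpha_{\theta}\lambda(1-\rho)\ell}\bigr)$, which is precisely the first equation of~\eqref{eq: fixpt-thm}; the second follows by the symmetric computation. The membership $\ell(\lambda)\in(0,1)$ and $\rho(\lambda)\in(0,1)$ then follows from $q_1,q_2\in(0,1)$ for $\lambda>1$ (Lemma~\ref{lem: solv'}(i)): indeed $\ell$ is a convex combination of $1-q_1,\,1-q_2\in(0,1)$, and $\rho=\theta(1-q_1)/\ell$ is a proper fraction.

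For uniqueness I would run the substitution backwards. Given any solution $(\ell,\rho)\in(0,\infty)^2$ of~\eqref{eq: fixpt-thm}, the right-hand sides of the two equations are positive and strictly bounded by $\theta$ and $1-\theta$ respectively; reading these off forces $\rho\in(0,1)$, $\rho\ell<\theta$, $(1-\rho)\ell<1-\theta$, and hence $\ell<1$. Setting $q_1=1-\rho\ell/\theta$ and $q_2=1-(1-\rho)\ell/(1-\theta)$ then produces a pair in $(0,1)^2$, and reversing the algebra of the previous paragraph shows $(q_1,q_2)$ solves~\eqref{eq: fixedpt}. By Theorem~\ref{thm: br}(ii) this system has a unique solution in $(0,1)^2$, so $(q_1,q_2)=(q_1(\lambda),q_2(\lambda))$ and $(\ell,\rho)$ is thereby determined. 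The one point demanding genuine care — the main obstacle — is precisely verifying that an arbitrary $(0,\infty)^2$-solution lands in the range of the change of variables, i.e.~that the induced $(q_1,q_2)$ lies in $(0,1)^2$ and not merely in $(-\infty,1)^2$; the positivity and upper-bound observations just noted are exactly what close this gap.

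For (ii), differentiability and strict monotonicity of $\ell=\theta(1-q_1)+(1-\theta)(1-q_2)$ are inherited verbatim from Lemma~\ref{lem: solv'}(i), where $q_1,q_2$ are differentiable and strictly decreasing on $(1,\infty)$; the boundary values $q_i(1+)=1$ and $q_i(\infty)=0$ give $\ell(1+)=0$ and $\ell(\infty)=1$, so a continuous strictly increasing function with these limits is a bijection onto $(0,1)$. For (iii), I would use the first-order relation $1-q_1=\gamma_{\theta}(1-q_2)+o(|1-q_2|)$ as $\lambda\to1+$ from Lemma~\ref{lem: solv'}(ii); substituting into $\rho=\theta(1-q_1)/[\theta(1-q_1)+(1-\theta)(1-q_2)]$ and cancelling a common factor $(1-q_2)$ yields $\rho\to\theta\gamma_{\theta}/(\theta\gamma_{\theta}+(1-\theta))$, which, using $\theta\gamma_{\theta}=\sqrt{\theta(1-\theta)}$, simplifies to $\sqrt{\theta}/(\sqrt{\theta}+\sqrt{1-\theta})=(1+\gamma_{\theta})^{-1}$, as claimed.
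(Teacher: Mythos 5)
Your proof is correct and takes essentially the same route as the paper: the change of variables \eqref{def: ell} transporting \eqref{eq: fixpt-thm} into \eqref{eq: fixedpt}, uniqueness via Theorem~\ref{thm: br}, and parts (ii)--(iii) read off directly from Lemma~\ref{lem: solv'}. The only difference is cosmetic: where you carefully verify that an arbitrary solution in $(0,\infty)^2$ induces a pair $(q_1,q_2)\in(0,1)^2$ before invoking uniqueness in $(0,1)^2$, the paper instead uses the stronger clause of Theorem~\ref{thm: br} that $(1,1)$ is the \emph{only other} solution of \eqref{eq: fixedpt}, which is ruled out since $\tilde\ell\,\tilde\rho>0$ forces $\tilde q_1<1$.
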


\begin{proof}
For {\it (i)}, we note that if $(\tilde\ell(\lambda), \tilde \rho(\lambda))$ is a solution of~\eqref{eq: fixpt-thm}, then 
\[
\tilde q_1(\lambda) = 1-\frac{\tilde \ell(\lambda) \cdot \tilde \rho(\lambda)}{\theta}, \quad \tilde q_2(\lambda) =  1-\frac{\tilde \ell(\lambda) \cdot \big(1-\tilde\rho(\lambda)\big)}{1-\theta}
\]
solves~\eqref{eq: fixedpt}. Since $\lambda>1$, by Theorem~\ref{thm: br}, the solutions of~\eqref{eq: fixedpt} are given by $(1, 1)$ and $(q_1(\lambda), q_2(\lambda))$. It follows that $(\ell(\lambda), \rho(\lambda))$ is the unique solution in $(0, \infty)^2$. We have also seen in Theorem~\ref{thm: br} that $q_i(\lambda)\in (0, 1)$, $i\in \{1, 2\}$. Hence, $\ell(\lambda)\in (0, 1)$ and $\rho(\lambda)\in (0, 1)$. 
Properties in {\it(ii)} and {\it(iii)} are immediate consequences of Lemma~\ref{lem: solv'}.
\end{proof}

\section{Proof in the linear regime}
\label{sec: linear}

We prove here Theorem~\ref{thm: linear}. Recall the random graph $G(n_b, n_w, p)$ and assume that $p$ is as in~\eqref{hyp: p}. 
Recall from Theorem~\ref{thm: giant-size} that for $\lambda>1$, $G(n_b, n_w, p)$ contains a giant component $\mathcal C_n(\lambda)$ with high probability.  
Let us denote by $k_-(\lambda)$ (resp.~$k_+(\lambda)$) the lowest (resp.~highest) rank of the vertices in $\mathcal C_n(\lambda)$. 
Thanks to Proposition~\ref{prop: prim}, the vertex set of $\mathcal C_n(\lambda)$ is precisely $\{\sigma(i): k_-(\lambda)\le i\le k_+(\lambda)\}$. 

\begin{Lem}
\label{lem: kminus}
For each $\lambda\in (1, \infty)$, the following convergence in probability takes place:
\[
\frac{k_-(\lambda)}{n}\xrightarrow{n\to\infty} 0.
\]
\end{Lem}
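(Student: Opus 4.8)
The plan is to combine the interval representation of components (Proposition~\ref{prop: prim}) with the renewal structure of Prim's algorithm as it passes from one component to the next. Since the giant $\mathcal C_n(\lambda)$ occupies the interval $\llbracket k_-(\lambda), k_+(\lambda)\rrbracket_\sigma$, the quantity $k_-(\lambda)-1$ is exactly the total number of vertices lying in the components of $G(n_b,n_w,p)$ that are fully explored, in Prim's order, before the giant is reached. Labelling these pre-giant components $\mathcal C_1(p),\dots,\mathcal C_K(p)$, where $K$ is the (random) number of restarts needed to enter the giant, we have $k_-(\lambda)-1=\sum_{j=1}^K|\mathcal C_j(p)|$, and the goal reduces to showing this sum is $o_p(n)$.

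The heart of the argument is that each restart lands in the giant with probability bounded away from $0$. Condition on the percolation configuration $(\mathbf 1_{\{U_e\le p\}})_{e\in E_n}$ and on the explored set $S_j=\mathcal C_1(p)\cup\dots\cup\mathcal C_j(p)$. Given this data, the restart vertex $R_j=\sigma(J_j+1)$ is the endpoint outside $S_j$ of the cheapest edge of weight exceeding $p$ joining $S_j$ to $V_n\setminus S_j$. Because $S_j$ is a union of complete components, every $v\notin S_j$ is attached to $S_j$ only through such high-weight edges, whose weights are i.i.d.\ uniform on $(p,1)$ conditionally on the configuration. A symmetry argument then shows that, given the colour of $R_j$ and the exploration history, $R_j$ is uniform among the remaining vertices of that colour: any two remaining vertices of the same colour play interchangeable roles in all the events defining the previous restarts. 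As long as the giant has not yet been entered, all of its vertices are still available, so the conditional probability that $R_j$ falls in the giant is at least $\min\{C^{\mathrm b}_n(\lambda)/n_b,\ C^{\mathrm w}_n(\lambda)/n_w\}$. By Theorem~\ref{thm: giant-size} this bound converges in probability to $\delta:=\min\{1-q_1(\lambda),\,1-q_2(\lambda)\}>0$. Working on the high-probability event where $C^{\mathrm b}_n(\lambda)/n_b$ and $C^{\mathrm w}_n(\lambda)/n_w$ both exceed $\delta/2$, the variable $K$ is then stochastically dominated by a geometric random variable, so that $K=O_p(1)$.

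To conclude, I would combine $K=O_p(1)$ with the standard supercritical fact that the second-largest component $L_n$ of $G(n_b,n_w,p)$ satisfies $L_n=o_p(n)$; each pre-giant component, being non-giant, has size at most $L_n$, whence
\[
\frac{k_-(\lambda)-1}{n}=\frac{1}{n}\sum_{j=1}^K|\mathcal C_j(p)|\ \le\ \frac{K\,L_n}{n}=O_p(1)\cdot o_p(1)=o_p(1).
\]
The main obstacle is the second paragraph: making rigorous that, given the colour and the history, the restart vertex is uniform over the remaining same-colour vertices. This requires tracking how the conditioning on the previous boundary minima interacts with the fresh uniform weights on edges leaving $S_j$, and verifying that it preserves exchangeability among the same-colour vertices outside $S_j$. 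The auxiliary input $L_n=o_p(n)$ is routine in this regime (for instance by dominating the exploration of the complement of the giant with a subcritical two-type branching process, in the spirit of Section~\ref{sec: percolation}); once both ingredients are secured, the conclusion is immediate.
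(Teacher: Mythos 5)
Your proposal follows essentially the same architecture as the paper's proof: the number of restarts before entering the giant is dominated by a geometric variable with success probability $\xi_n=\min\big(C^{\mathrm b}_n(\lambda)/n_b,\,C^{\mathrm w}_n(\lambda)/n_w\big)$, which converges in probability to a positive constant by Theorem~\ref{thm: giant-size}; each pre-giant component is at most the second-largest component, which is $o_p(n)$ by Theorem 12 of~\cite{Johansson12}; and the product of the two bounds $k_-(\lambda)$. The only real divergence is at the step you yourself flag as the main obstacle, and it is instructive to see how the paper discharges it. Rather than conditioning on the full percolation configuration and arguing exchangeability of the restart vertex among unexplored same-colour vertices, the paper conditions on the stopped sigma-algebra $\cH_{\ell}=\cF_{\zeta_{\ell}-1}$ attached to the $\ell$-th lead vertex: by Lemma~\ref{lem: Hk}, conditionally on $\cH_{\ell}$ the weights on the unexplored edge set are fresh i.i.d.\ uniforms, so the induced subgraph $\hat G$ on $\{\sigma(i):i\ge \zeta_{\ell}\}$ is a genuine bipartite percolation, to which the permutation inequality~\eqref{id: unif_giant} (proved for a fixed vertex by averaging over colour-preserving permutations) applies directly, together with the observation that on $\{L\ge\ell\}$ the giant of $G(n_b,n_w,p)$ is still a largest component of $\hat G$. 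In this conditioning no interaction with previously revealed boundary minima arises, because everything revealed so far is $\cH_{\ell}$-measurable and the remaining weights are exactly uniform. Your direct route can in fact be completed: for two unexplored same-colour vertices $u,v$, swapping the weights of the (all closed, since $S_j$ is a union of full components) edges joining them to $S_j$ preserves the configuration, preserves every earlier restart event (each earlier argmin edge points into $S_j$, and the swap only permutes the boundary-weight multiset of each $S_{j'}$), and exchanges $\{R_j=u\}$ with $\{R_j=v\}$, giving the claimed conditional uniformity. But this is precisely the verification you left open, and the stopped-filtration formulation of the paper's appendix packages the same symmetry more cleanly; as written, your proof is an outline whose critical step is acknowledged rather than established.
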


\begin{proof}
Recall that $C^{\mathrm b}_n(\lambda)$ and $C^{\mathrm w}_n(\lambda)$ are the respective numbers of black and white vertices in $\mathcal C_n(\lambda)$. 
Take $v\in V_n$. Let us first show the following:
\begin{equation}
\label{id: unif_giant}
\mathbb P(v\in \mathcal C_n(\lambda)\,|\, C^{\mathrm b}_n(\lambda), C^{\mathrm w}_n(\lambda)) \ge \frac{C^{\mathrm b}_n(\lambda)}{n_b}\mathbf 1_{\{v\in V^b_n\}} + \frac{C^{\mathrm w}_n(\lambda)}{n_w}\mathbf 1_{\{v\in V^w_n\}}.
\end{equation}
To see why this is true, we first note that the probability of each configuration of $G(n_b, n_w, p)$ only depends on the number of edges present in the configuration.  
Denote by $\Pi_{n}$ the set of all permutations $\pi: V_n\to V_n$ satisfying $\pi(V^b_n)=V^b_n$. 
Let $\varrho$ be a uniform element of $\Pi_{n}$. 
Let $c_b, c_w\in \N$ and let $\Gamma_n$ be the set of all  graphs of $n_b$ black vertices, $n_w$ white vertices, and containing a largest connected component with $c_b$ black vertices and $c_w$ white vertices. 
For $g\in \Gamma_n$, denote by $A(g; v)$ the event that $v$ belongs to a largest connected component of $g$. 
If $v$ is black and $g$ has a unique largest connected component, then  by summing over all the possible permutations, we have 
\[
\mathbb P\big(A(\varrho(g); v)\big) = \sum_{\pi\in\Pi_n}\frac{1}{n_b!n_w!}\mathbf 1_{A(\pi(g); v)} = \frac{c_b}{n_b}.
\]
This probability is even larger if $g$ has more than one largest connected components. 
We have

\[
\mathbb P\big(v\in \mathcal C_n(\lambda), C^{\mathrm b}_n(\lambda)  = c_b, C^{\mathrm w}_n(\lambda) = c_w\big) = \sum_{g\in \Gamma_n} \mathbb P(G(n_b, n_w, p)=g)\mathbf 1_{A(g;v)}.
\]
On the other hand, noting that the law of $G(n_b, n_w, p)$ is invariant by $\varrho$, we deduce that the previous probability can also be written as follows:
\begin{align*}
    \mathbb P\big(v\in \mathcal C_n(\lambda); C^{\mathrm b}_n(\lambda)  = C^{\mathrm b}_n, C^{\mathrm w}_n(\lambda) = c_w\big) 
& = \sum_{g\in \Gamma_n} \mathbb P\big(G(n_b, n_w, p)=g\big)\cdot\mathbb P\big(A(\varrho(g); v)\big)\\
& \ge\frac{c_b}{n_b} \sum_{g\in \Gamma_n} \mathbb P\big(G(n_b, n_w, p)=g\big)\\
& = \frac{c_b}{n_b}\cdot \mathbb P\big(C^{\mathrm b}_n(\lambda)  = c_b, C^{\mathrm w}_n(\lambda) = c_w\big).
\end{align*}
The case of a white vertex can be argued similarly. Combing the two cases yields~\eqref{id: unif_giant}. 

We say that a vertex is a lead if it is the lowest ranking vertex in its connected component of $G(n_b, n_w, p)$. 
For $\ell\ge 1$, let $\zeta_{\ell}$ be the rank of the $\ell$-th lead vertex and let $\cH_{\ell}$ be the sigma-algebra as defined in Section~\ref{sec: app}.  Let $N^b_{\ell}$ and $N^w_{\ell}$ stand for the number of black and white vertices among $\{\sigma(i): i\ge \zeta_{\ell}\}$. We note that $\zeta_{\ell}, \sigma(\zeta_{\ell}), N^b_{\ell}$ and $N^w_{\ell}$ are all $\cH_{\ell}$-measurable. Denote by $\hat K$ the induced subgraph of $K_{n_b, n_w}$ on $\{\sigma(i): i\ge \zeta_{\ell}\}$ and by $\hat G$ the induced subgraph of $G(n_b, n_w, p)$ on the same vertex set. 
Thanks to Lemma~\ref{lem: Hk} {\it(ii)}, $\hat K$ is a complete bipartite graph with $N^b_{\ell}$ black vertices, $N^w_{\ell}$ white vertices and i.i.d.~uniform edge weights. 
Suppose that $\mathcal C_n(\lambda)$ is the $L$-th connected component in Prim's rank. We note that given $\cH_{\ell}$ and $\{L\ge \ell\}$, we have
\[
\{L=\ell\} = \{k_-(\lambda)=\zeta_{\ell}\}=\{\sigma(\zeta_{\ell})\in\mathcal C_n(\lambda)\}.
\]
On the event $L\ge \ell$, a largest connected component of $G(n_b, n_w, p)$ is also a largest connected component of $\hat G$. Noting that $N^b_{\ell}\le n_b, N^w_{\ell}\le n_w$, we apply~\eqref{id: unif_giant} to $\hat G$ and $\sigma(\zeta_{\ell})$ to find that
\[
\mathbb P\big(L=\ell\,\big|\, L\ge \ell, \cH_{\ell}, C^{\mathrm b}_n(\lambda), C^{\mathrm w}_n(\lambda)\big) \ge \min\Big(\frac{C^{\mathrm b}_n(\lambda)}{n_b}, \frac{C^{\mathrm w}_n(\lambda)}{n_w}\Big)=:\xi_n. 
\]
It follows that 
\[
\mathbb P\big(L>\ell\,\big|\, \xi_n\big)=\prod_{1\le \ell'\le \ell} \mathbb P\big(L> \ell'\,\big|\, L\ge \ell', \xi_n\big) \le (1-\xi_n)^{\ell}. 
\]
This shows that $L$ is stochastically bounded by a geometric random variable of mean $\xi_n^{-1}$. Thanks to Theorem~\ref{thm: giant-size}, $\xi_n$ converges in probability to a strictly positive real number.  Consequently, the distribution of $L$ is tight. Let $C^{(2)}(\lambda)$ be the size of the second largest connected component in $G(n_b, n_w, p)$. Theorem 12 in \cite{Johansson12} shows that $C^{(2)}(\lambda) = o_p(n)$ as $n\to\infty$. The conclusion now follows from the bound that $k_-(\lambda) \le  L \cdot C^{(2)}(\lambda)$.
\end{proof}

We are now ready for the main proof in the linear regime.

\begin{proof}[Proof of Theorem~\ref{thm: linear}]
Thanks to Lemma~\ref{lem: solv}, it only remains to prove the convergence in~\eqref{eq: thm2}. 
We have seen in Lemma~\ref{lem: solv} that $\ell: (1, \infty)\to (0, 1)$ is invertible. Fix $s\in (0,1)$ and take $\lambda_{\ast}=\ell^{-1}(s)$, so that $\ell(\lambda_{\ast})=s$. 
We note that the size of the giant component is given by $k_+(\lambda)-k_-(\lambda)+1$. Theorem~\ref{thm: giant-size} then says that for each $\lambda>1$, 
\[
\frac{1}{n}\big(k_+(\lambda)-k_-(\lambda)\big)\xrightarrow{n\to\infty} \ell(\lambda)\quad\text{in probability}.
\]
Combining this with Lemma~\ref{lem: kminus}, we find  $
k_+\big(\lambda_{\ast}\big) = sn +o_p(n)$, which then implies
\[
\big|\Sigma^b(\lfloor sn\rfloor)\big| = \Big|\Sigma^b\big(k_+(\lambda_{\ast})\big)\Big|+o_p(n), \quad n\to\infty.
\]
Lemma~\ref{lem: kminus} also yields that $|\Sigma^b(k_-(\lambda_{\ast}))| \le k_-(\lambda_{\ast})=o_p(n)$. Note that $|\Sigma^b(k_+(\lambda_{\ast}))|-|\Sigma^b(k_-(\lambda_{\ast}))|+1$ corresponds to the number of black vertices in the giant component. 
We deduce that as $n\to\infty$, 
\begin{align*}
\rho^{(n)}_{\lfloor sn\rfloor} & = \frac{1}{sn}\Big(\Big|\Sigma^b(\lfloor sn\rfloor)\Big|+o_p(n)\Big)= \frac{1}{sn}\Big(\Big|\Sigma^b\big(k_+(\lambda_{\ast})\big)\Big|+o_p(n)\Big) \\
& = \frac{1}{sn}\Big(\Big|\Sigma^b\big(k_+(\lambda_{\ast})\big)\Big|-\Big|\Sigma^b\big(k_-(\lambda_{\ast})\big)\Big|+o_p(n)\Big)
= \frac{1}{sn}\Big(C^{\mathrm b}_n(\lambda_{\ast})+o_p(n)\Big). 
\end{align*}
The conclusion now follows from Theorem~\ref{thm: giant-size} and~\eqref{def: ell}. 
\end{proof}

\section{Distribution of the neighbourhoods}
\label{sec: neighbour}

In preparation for the proof of Theorem~\ref{thm: sublinear}, we study here the sizes of the neighbourhoods discovered in Algorithm~\ref{algo: 2-explore}, which explores the black vertices in the random graph $G(n_b, n_w, p)$ in Prim's ranking. Recall from~\eqref{def: Kb}-\eqref{def: Ow} the sets $\cK^b_k$, $\cO^b_k$, $\cK^w_k$ and $\cO^w_k$. We denote
\begin{equation}
\label{def: O-size}
K^b_k:=|\cK^b_k|, \quad K^w_k:=|\cK^w_k|, \quad O^b_k:=|\cO^b_k|\quad\text{and}\quad O^w_k:=|\cO^w_k|. 
\end{equation}
We point out that all the previous random variables depend on the graph $G(n_b, n_w, p)$, and thus on $p\in [0, 1]$; we nevertheless suppress this dependency to ease the notation. 
In the main result of this section (Proposition~\ref{prop: incre} below), we determine the respective distributions of $O^b_k$ and $O^w_k$. We then derive upper and lower bounds for these random variables, which are key ingredients for the proofs in Section~\ref{sec: sublinear}.  

Towards that end, we first introduce some notions. 
We recall that a vertex of $G(n_b, n_w, p)$ is called a {\it lead} if it has the lowest Prim rank in its connected component. If a component contains at least one black vertex, then the black vertex with the lowest rank will be referred to as a {\em root vertex}, as it is the root of the spanning tree extracted by Algorithm~\ref{algo: 2-explore}. Clearly, a black lead vertex is also a root vertex, but not necessarily vice versa. 
Recall $\Sigma^b(k)$ and $\Sigma^w(k)$ from~\eqref{def: Sigma-set} and $\tau^b_k$ from Proposition~\ref{prop: prim-explore}. Note that in particular, we have $\Sigma^b(\tau^b_k)=\{\sigma(\tau^b_i): i\in [k]\}=\{\sigma^b(i): i\in [k]\}$. 
Let $\cJ^w_k$ denote the set of all white lead vertices from $\Sigma^w(\tau^b_k)$. We also denote $\cI^b_k$ to be the set of all root vertices from $\Sigma^b(\tau^b_k)$. Recall that for $A\subseteq V_n$, $\cN(A)$ and $\cN^2(A)$ are the respective 1-and 2-neighbourhoods of $A$ in $G(n_b, n_w, p)$, as defined in~\eqref{def: N1} and~\eqref{def: N2}. 
 
Finally, recall from Algorithm~\ref{algo: 2-explore} that $\cA^b_k=\cN^2(\Sigma^b(\tau^b_k))$. 

\begin{Lem}
\label{lem: Kset}
Let $p\in [0, 1]$. The following statements hold true for $G(n_b, n_w, p)$.
\begin{enumerate}[(i)]
\item
Let $k\in [n_b]$. If $v\in \cJ^w_k$ and $\cN(v)\ne \varnothing$, then we can find some $j\in [k]$ so that $v=\sigma(\tau^b_j-1)$, $\sigma(\tau^b_j)\in \cN(v)$, and $\sigma(\tau^b_j)\in \cI^b_k$. 
\item
For all $k \in [n_b]$, $\cJ^w_{n_b}\cap \cO^w_k=\varnothing$ and $\cI^b_{n_b}\cap \cO^b_k=\varnothing$.
\item
For all $k\in [n_b]$, we have the following compositions into disjoint sets:
\begin{equation}
\label{id: Kset}
V^w_n \setminus \cK^w_k = \bigcup_{j\in[k-1]}\cO^w_j\,\bigcup\,\cJ^w_k \quad\text{and}\quad V^b_n\setminus \cK^b_k = \bigcup_{j\in [k-1]}\cO^b_j \,\bigcup\, \cI^b_k.
\end{equation}
\item
For all $k\in [n_b]$, we also have
\begin{equation}
\label{eq: Nset}
\cN\big(\Sigma^b(\tau^b_k)\big) \subseteq \bigcup_{j\in [k]}\cO^w_j\,\bigcup\,\cJ^w_k \quad\text{and}\quad \cA^b_k=\cN^2\big(\Sigma^b(\tau^b_k)\big) = \bigcup_{j\in [k]}\cO^b_j\setminus\Sigma^b(\tau^b_k).
\end{equation}
\end{enumerate}
\end{Lem}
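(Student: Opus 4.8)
The plan is to establish the four items in order, as (ii) relies on (i) and the decompositions (iii)--(iv) rest on a classification of the black (resp.\ white) vertices into \emph{roots/leads} versus \emph{discovered} vertices that is made precise by (i) and (ii). Throughout I will use Proposition~\ref{prop: prim}, which identifies each connected component of $G(n_b, n_w, p)$ with an interval $\llbracket k_-^C, k_+^C\rrbracket_{\sigma}$ of Prim's order, together with the $\sigma$-good path property of Lemma~\ref{lem: prim}(ii). For (i), a white lead $v$ is by definition the lowest-ranked vertex of its component $C$, so $\mathrm{rank}(v)=k_-^C$; since $\cN(v)\neq\varnothing$, the component has at least two vertices, and the connectivity argument inside the proof of Proposition~\ref{prop: prim} makes $\sigma(k_-^C+1)$ adjacent to $v$. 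As $v$ is white this neighbour is black and is the lowest-ranked black vertex of $C$, hence a root vertex; writing it as $\sigma(\tau^b_j)$ gives $v=\sigma(\tau^b_j-1)$ and $\sigma(\tau^b_j)\in\cN(v)$, while $v\in\Sigma^w(\tau^b_k)$ forces $k_-^C<\tau^b_k$ (the rank $\tau^b_k$ being black), so $\tau^b_j=k_-^C+1\le\tau^b_k$ and $\sigma(\tau^b_j)\in\cI^b_k$.

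Item (ii) I would argue by contradiction. If a white lead $w\in\cO^w_k\subseteq\cN(\sigma^b(k))\cap\cK^w_k$, then $w$ and $\sigma^b(k)$ share a component, so its root $\sigma(\tau^b_j)$ from (i) satisfies $\tau^b_j\le\tau^b_k$ and hence $\mathrm{rank}(w)=\tau^b_j-1<\tau^b_k$, placing $w\in\Sigma^w(\tau^b_k)$; but unrolling~\eqref{def: Ow} shows $\cK^w_k$ excludes $\Sigma^w(\tau^b_k)$, a contradiction. The black case is more direct: a root $r\in\cO^b_k\subseteq\cN^2(\sigma^b(k))\cap\cK^b_k$ lies in $\sigma^b(k)$'s component and, being its lowest-ranked black vertex, equals $\sigma^b(m)$ with $m\le k$, so $r\in\Sigma^b(\tau^b_k)$, which~\eqref{def: Kb} removes from $\cK^b_k$.

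For (iii), unrolling~\eqref{def: Kb} and~\eqref{def: Ow} already gives $V^b_n\setminus\cK^b_k=\bigcup_{j\le k-1}\cO^b_j\cup\Sigma^b(\tau^b_k)$ (this is~\eqref{id: AB1}) and $V^w_n\setminus\cK^w_k=\bigcup_{j\le k-1}\cO^w_j\cup\Sigma^w(\tau^b_k)$; it remains to trade $\Sigma^b(\tau^b_k)$ for $\cI^b_k$ and $\Sigma^w(\tau^b_k)$ for $\cJ^w_k$ modulo these unions. The classification I would use is that every $\sigma^b(i)$ with $i\le k$ is either a root (so in $\cI^b_k$) or, not being a root, was selected from a nonempty $\cA^b_{i-1}$ and thus lies in some $\cO^b_j$ with $j\le i-1$; and every white vertex of rank $\le\tau^b_k$ is either a white lead (in $\cJ^w_k$) or, not being a lead, has a black neighbour of strictly smaller rank (read off a $\sigma$-good path) and so is discovered in some $\cO^w_j$ with $j\le k-1$. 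Disjointness of the resulting unions comes from the pairwise disjointness of the $\cO$-sets noted after their definition, together with (ii), which separates $\cI^b_k$ and $\cJ^w_k$ from every $\cO$-set.

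Finally (iv) follows from (iii) and the index-$k$ form of~\eqref{id: Abk}, namely $\cN^2(\Sigma^b(\tau^b_k))=\bigcup_{j\le k}\cN^2(\sigma^b(j))\setminus\Sigma^b(\tau^b_k)$. A white neighbour of some $\sigma^b(i)$, $i\le k$, is either a white lead, which (i) places in $\cJ^w_k$, or a non-lead discovered in some $\cO^w_j$, $j\le k$, yielding the first inclusion. For the second equality, $\supseteq$ is immediate from $\cO^b_j\subseteq\cN^2(\sigma^b(j))$; for $\subseteq$, given $v\in\cN^2(\sigma^b(j))\setminus\Sigma^b(\tau^b_k)$ with $j\le k$, I would set $j'=\min\{i\le k: v\in\cN^2(\sigma^b(i))\}$ and check that $v\notin\Sigma^b(\tau^b_{j'})$ and $v\notin\bigcup_{i<j'}\cO^b_i$, so that $v\in\cN^2(\sigma^b(j'))\cap\cK^b_{j'}=\cO^b_{j'}$. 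I expect the main obstacle to be exactly this bookkeeping in (iii)--(iv): one must verify at each step that a vertex declared ``discovered in $\cO^w_j$ (resp.\ $\cO^b_{j'}$)'' is genuinely still in the pool $\cK^w_j$ (resp.\ $\cK^b_{j'}$), i.e.\ that it was neither removed earlier through a $\Sigma$-term nor discovered before. The precise rank comparisons---that a non-lead white vertex has a strictly lower-ranked black neighbour, and that a root is the first black vertex of its interval component---are what make these step indices come out right.
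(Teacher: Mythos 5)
Your proposal is correct, and in (i) and (iv) it runs essentially along the paper's own lines: the GEO interval structure and $\sigma$-good paths of Lemma~\ref{lem: prim} for (i), and for the second identity in~\eqref{eq: Nset} the minimal-index argument $j'=\min\{i\le k: v\in\cN^2(\sigma^b(i))\}$ followed by the pool-membership check $v\in\cK^b_{j'}$, which is exactly the mechanism behind the paper's~\eqref{id: AB1} and~\eqref{id: Abk}. Where you genuinely diverge is in (ii) and (iii). For (ii) the paper argues the black case by splitting into $k\ge j$ and $k<j$; you merge this into a single contradiction through the root of the shared component: a root $r\in\cN^2(\sigma^b(k))$ is the lowest black vertex of $\sigma^b(k)$'s component, hence equals $\sigma^b(m)$ with $m\le k$, so $r\in\Sigma^b(\tau^b_k)$, which~\eqref{def: Kb} removes from $\cK^b_k$ --- shorter and valid (similarly your white case routes the rank comparison through (i) rather than a case split on $\tau^b_k$ versus $\mathrm{rank}(w)$). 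For (iii) the paper proceeds by induction on $k$ with a two-case analysis according to whether $\sigma^b(k)$ and $\sigma^b(k-1)$ share a component, Case II handling the isolated white vertices and the lead $\sigma(\tau^b_k-1)$ by hand; you instead classify every vertex directly --- a white non-lead has a strictly lower-ranked black neighbour read off a $\sigma$-good path, so lands in $\cO^w_{j_\ast}$ at the minimal index $j_\ast$ (minimality gives $v\notin\bigcup_{j<j_\ast}\cO^w_j$, and $\tau^b_{j_\ast}<\mathrm{rank}(v)$ gives $v\notin\Sigma^w(\tau^b_{j_\ast})$, whence $v\in\cK^w_{j_\ast}$) --- which subsumes the paper's case split, since the isolated whites and $\sigma(\tau^b_k-1)$ are leads and sit in $\cJ^w_k$ by definition. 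Your non-inductive classification is cleaner; the paper's induction makes the pool-membership verification more mechanical but at the cost of the Case I/II bookkeeping.

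One step in your (iii) is compressed and should be expanded: ``not being a root, was selected from a nonempty $\cA^b_{i-1}$ and thus lies in some $\cO^b_j$ with $j\le i-1$'' hides two nontrivial facts. First, non-root $\Rightarrow \cA^b_{i-1}\ne\varnothing$ needs the alternating-path argument (a lower-ranked black vertex in the same component produces, along the path, an element of $\cN^2(\Sigma^b(\tau^b_{i-1}))$); this is~\eqref{id-pf: I2}, which the paper only proves later, in Lemma~\ref{lem: Aset}. Second, $\sigma^b(i)\in\cA^b_{i-1}$ does not by itself place $\sigma^b(i)$ in $\bigcup_{j\le i-1}\cO^b_j$: the identity~\eqref{id: AB1} equates the two unions only modulo $\Sigma^b(\tau^b_i)$, and is inconclusive precisely at the boundary vertex $\sigma^b(i)$. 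You must run the same minimal-index argument you spell out in (iv) --- find $j\le i-1$ with $\sigma^b(i)\in\cN^2(\sigma^b(j))$ via the path (as in Case 2 of Proposition~\ref{prop: prim-explore}), take $j_\ast$ minimal, and check $\sigma^b(i)\in\cK^b_{j_\ast}$; this is exactly the paper's~\eqref{id: Kset-b'}. Since you explicitly flag this bookkeeping as the crux and execute it in (iv), I read this as a presentational compression rather than a gap, but in a final write-up the black classification in (iii) needs that argument written out.
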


\begin{proof}

For {\it(i)}, we can assume that $v=\sigma(i)$ for some $i<\tau^b_k$. Since $v$ is a lead vertex with a nonempty neighbourhood, we deduce from Proposition~\ref{lem: prim} and the definition of GEO that $\llbracket i, i+1\rrbracket_{\sigma}$ is connected in $G(n_b, n_w, p)$. 
So it must be the case that $\sigma(i+1)\in \cN(v)$. As $v$ is a white vertex, $\sigma(i+1)$ is then a black vertex. Therefore, we have $i+1=\tau^b_j$ for some $j\le k$. It is also clear that $\sigma(\tau^b_j)$ is the lowest ranking black vertex in its component. 

\smallskip
For {\it(ii)}, let us assume that $v=\sigma(i)\in \cJ^w_{n_b}$. 
As $v$ is a lead vertex, it is not connected to any vertex of a lower rank. Hence, $v\notin \cO^w_k$ for any $\tau^b_k<i$, since $\cO^w_k\subseteq \cN(\sigma^b(k))$ by definition. If $\tau^b_k > i$, we note that $v\in \Sigma^w(\tau^b_k)$, so that $v\notin \cK^w_k$ and therefore $v\notin \cO^w_k$. This proves that  $\cJ^w_{n_b}$ is disjoint from any $\cO^w_k$. 

Next, we show that $\cI^b_{n_b}\cap \cO^b_k=\varnothing$. It suffices to show that if $u=\sigma^b(j)\in \cI^b_j$, then $u\notin \cO^b_k$ for all $k\in [n_b]$. First, this is clear if $k\ge j$, since in that case we will have $u\in \Sigma^b(\tau^b_k)\subseteq V^b_n\setminus \cK^b_k$. Second, consider the case where $k<j$. However, as $u$ is a root vertex, it is in a different connected component from  any black vertex of a lower rank. It follows that $u\notin \cN^2(\sigma^b_k)$ and therefore $u\notin \cO^b_k$ for all $k<j$. 

\smallskip
Proceeding to the proof of {\it (iii)}, we first deduce from the  definition that $V^w_n\setminus \cK^w_k=\cup_{j\le k-1}\cO^w_j\cup \Sigma^w(\tau^b_k)$. We have also seen in {\it (ii)} that $\cJ^w_k\subseteq \Sigma^w(\tau^b_k)\setminus \cup_{j\le k-1}\cO^w_j$. So it suffices to show that for all $k\in [n_b]$, 
\begin{equation}
\label{id: Kset'}
\Sigma^w(\tau^b_k)\setminus \Big(\bigcup_{j\le k-1}\cO^w_j\Big) \subseteq \cJ^w_k. 
\end{equation}
We will prove this with an induction on $k$. For $k=1$, we note that $\tau^b_1\in \{1, 2\}$, since $\sigma(\tau^b_1)$ is necessarily an endpoint of the first Prim edge $e_1$. In either case, we readily find that  $\Sigma^w(\tau^b_1)= \cJ^w_1$. 

For the induction step,  assuming that the identity holds for some $k-1\ge 1$, let us argue that it also holds for $k$, which reduces to showing that 
\begin{equation}
\label{id: Kset''}
\Sigma^w(\tau^b_k)\setminus \Big(\Sigma^w(\tau^b_{k-1})\bigcup \bigcup_{j\le k-1}\cO^w_j\Big) \subseteq \cJ^w_k.
\end{equation}
Let us denote $A_k=\Sigma^w(\tau^b_k)\setminus\Sigma^w(\tau^b_{k-1})=\{\sigma(i): \tau^b_{k-1}<i< \tau^b_k\}$. Assume $A_k$ to be nonempty; otherwise~\eqref{id: Kset''} is trivial. We consider the following cases.

\smallskip
\noindent
{\bf Case I:} $\sigma^b(k)$ is in the same connected component as $\sigma^b(k-1)$. In that case, by Proposition~\ref{prop: prim}, $A_k$ is contained in the connected component containing $\sigma^b(k)$. Let us show in this case $A_k\subseteq \cup_{j\le k-1}\cO^w_j$, which will then imply ~\eqref{id: Kset''}. Indeed, let $\sigma(i)\in A_k$; thanks to Lemma~\ref{lem: prim}, there is a $\sigma$-good path from the lead vertex of its connected component to $\sigma(i)$. Since $\sigma(i)$ is white and not the lead vertex, it must be adjacent to a black vertex of rank at most $\tau^b_{k-1}$. 
Let $j_{\ast}$ be the smallest such $j$ that $\sigma(i)\in \cN(\sigma^b(j))$. Then we have $j_{\ast}\le k-1$ and $\sigma(i)\notin \cup_{j'<j_{\ast}}\cN(\sigma^b(j'))$. It follows that $\sigma(i)\notin \cup_{j'<j_{\ast}}\cO^w_{j'}$, which is contained in $\cup_{j'<j_{\ast}}\cN(\sigma^b(j'))$. Moreover, as $i>\tau^b_{k-1}$, we have $\sigma(i)\notin \Sigma^w(\tau^b_{j_{\ast}})$. We then deduce that $i\in \cK^w_{j_{\ast}}$ and so $i\in \cO^w_{j_{\ast}}$. This shows $A_k\subseteq \cup_{j\le k-1}\cO^w_j$. 

\smallskip
\noindent
{\bf Case II:} $\sigma^b(k)$ is in a different connected component from $\sigma^b(k-1)$. In that case, let $i_{\ast}$ be the highest rank of the vertices in the connected component containing $\sigma^b(k-1)$. Also, let $i^{\ast}$ be the lowest rank of the vertices in the connected component containing $\sigma^b(k)$. 
By Proposition~\ref{prop: prim}, we have 
\[
\tau^b_{k-1}\le i_{\ast}<i^{\ast}\le \tau^b_k.
\] 
We further point out that
$i^{\ast}\in \{\tau^b_k, \tau^b_k-1\}$. Indeed, if $i^{\ast}<\tau^b_k$, then $\sigma$ being a GEO implies that  $\sigma(i^{\ast})$ is adjacent to $\sigma(i^{\ast}+1)$; it follows that the two vertices must have opposite colours.

Let us set  $A'_k=\{\sigma(i): i_{\ast}< i < \tau^b_k\}$ 
and show that 
\begin{equation}
\label{id: Jwk}
A_k\setminus  \bigcup_{j\le k-1}\cO^w_j  \subseteq A_k'\subseteq \cJ^w_k.
\end{equation}
Let $\tau^b_{k-1} < i\le i_{\ast}$; then as in the previous case, we can show that $\sigma(i)\in \cup_{j\le k-1}\cO^w_j$
by identifying the lowest ranking black vertex in its neighbourhood. This shows that $A_k\setminus A_k'\subseteq \cup_{j\le k-1}\cO^w_j$, which then implies the first inclusion in~\eqref{id: Jwk}. 
For the second, we note that an element of $A'_k$ is either ranked between $i_{\ast}$ and $i^{\ast}$, and therefore disconnected from the rest of the graph due to the absence of black vertices, or the white lead vertex $\sigma(i_{\ast})=\sigma(\tau^b_k-1)$. In both cases, they belong to $\cJ^w_k$.   
This also completes the proof of~\eqref{id: Kset''} in {\bf Case II}. 

The two cases combined completes the proof for~\eqref{id: Kset''}, which in turn proves~\eqref{id: Kset'} and the first identity in~\eqref{id: Kset}. For the second identity in~\eqref{id: Kset} concerning the black vertices, we can similarly reduce its proof to showing that 
\begin{equation}
\label{id: Kset-b}
\Sigma^b(\tau^b_k)\setminus\Big(\bigcup_{j\le k-1}\cO^b_j\Big) \subseteq \cI^b_k.
\end{equation}
For $k=1$, this is true as both sides are equal to $\{\sigma^b(1)\}$. For the induction step, it suffices to show that 
\begin{equation}
\label{id: Kset-b'}
\{\sigma^b(k)\}\setminus \Big(\bigcup_{j\le k-1}\cO^b_j\Big)  \subseteq  \cI^b_k. 
\end{equation}
If $\sigma^b(k)\in \cI^b_k$, there is nothing to prove. Suppose that $\sigma^b(k)$ is not a root vertex. 
We follow the same arguments of Case 2 in the proof of Proposition~\ref{prop: prim-explore}. By considering the path from $\sigma^b(k)$ to the lowest ranking black vertex in the connected component, we can find some $j\le k-1$ so that $\sigma^b(k)\in \cN^2(\sigma^b(j))$. Let  $j_{\ast}$ be the smallest such $j$. Then $\sigma^b(k)\notin \cup_{j'<j_{\ast}}\cO^b_{j'}$ and $\sigma^b_k\notin \Sigma^b(\tau^b_{j_{\ast}})$. It follows that $\sigma^b_k\in \cO^b_{j_{\ast}}\subseteq \cup_{j\le k-1}\cO^b_j$, which implies~\eqref{id: Kset-b'} and completes the proof.   

Finally, for {\it (iv)}, let us first show the first inclusion there. For $k=1$, we have 
\[
\cN\big(\sigma^b(1)\big) \subseteq \big(\cN\big(\sigma^b(1)\big) \cap \cK^w_1 \big) \cup (V^w_n\setminus \cK^w_1)  = \cO^w_1 \cup \cJ^w_1,
\]
since $\cJ^w_1=\Sigma^w(\tau^b_1)=V^w_n\setminus \cK^w_1$. 
This proves the case $k=1$. Assuming $\cN(\Sigma^b(\tau^b_{k-1}))\subseteq \cup_{j\le k-1}\cO^w_j\cup\cJ^w_{k-1}$, let us show that 
\begin{equation}
\label{eq: nsets}
\cN\big(\sigma^b(k)\big)\setminus \cO^w_k \subseteq \bigcup_{j\le k-1}\cO^w_j\bigcup \cJ^w_k,
\end{equation}
which will imply the induction step. 
However, by the definition of $\cO^w_k$, we have $\cN\big(\sigma^b(k)\big)\setminus \cO^w_k\subseteq V^w_n\setminus \cK^w_k$. The inclusion in~\eqref{eq: nsets} now follows from the first identity in~\eqref{id: Kset}. For the identity concerning $\cA^b_k$, we deduce from~\eqref{id: Abk} and~\eqref{id: AB1} that
\[
\cA^b_k=\bigcup_{j\in [k]}\cN^2(\sigma^b(j))\setminus \Sigma^b(\tau^b_k) = \bigcup_{j\in [k]}\cO^b_j \setminus \Sigma^b(\tau^b_k). 
\]
This completes the proof of {\it (iv)}.
\end{proof}

We next present a decomposition of $\cO^b_k$. For $k\in [n_b]$, 
if $\cO^w_k$ is nonempty, list its members in increasing order of their Prim's ranks as $(v_{k, i})_{1\le i\le O^w_k}$. Let $\cK_{k, 1}=\cK^b_k$; for $1\le i\le O^w_k$, set
\begin{equation}
\label{def: Ni}
\cN_i=\cN(v_{k, i})\cap \cK_{k, i}\quad\text{and}\quad  \cK_{k, i+1} = \cK_{k, i}\setminus \cN_{i}.
\end{equation}
We also set 
\[
\cN_0=\cN\big(\sigma(\tau^b_k-1)\big)\cap \cK_{k, O^w_k+1}
\]
if $\sigma(\tau^b_k-1)\in \cJ^w_k\cap \cN(\sigma(\tau^b_k))$, and $\cN_0=\varnothing$ otherwise. 

\begin{Lem}
\label{lem: decomp}
For each $k\in [n_b]$, the sets $\cN_i, 0\le i\le O^w_k$, are pairwise disjoint, and we have
\begin{equation}
\label{id: decomp}
\cO^b_k = 
\bigcup_{0\le i\le O^w_k}\cN_i.
\end{equation}
\end{Lem}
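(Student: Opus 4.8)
The plan is to work directly from the identity $\cO^b_k = \cN^2(\sigma^b(k)) \cap \cK^b_k$ in~\eqref{def: Obk} and to track, white‑neighbour by white‑neighbour, which black vertices of the current pool $\cK^b_k$ get reached. Since the graph is bipartite, a black vertex lies in $\cN^2(\sigma^b(k))$ precisely when it is adjacent to some white vertex $w \in \cN(\sigma^b(k))$ (the removal of $\sigma^b(k)$ in the definition of $\cN^2$ is automatic here, as $\sigma^b(k) \in \Sigma^b(\tau^b_k) \subseteq V^b_n \setminus \cK^b_k$). First I would record the resulting decomposition
\[
\cO^b_k = \bigcup_{w \in \cN(\sigma^b(k))} \big(\cN(w) \cap \cK^b_k\big),
\]
so that the whole statement reduces to identifying which white neighbours $w$ contribute a nonempty $\cN(w)\cap \cK^b_k$.

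The key step — and the one I expect to be the main obstacle — is the claim: if $w \in \cN(\sigma^b(k))$ is neither an element of $\cO^w_k$ nor the vertex $\sigma(\tau^b_k - 1)$, then $\cN(w) \cap \cK^b_k = \varnothing$. To prove it, I would use $\cO^w_k = \cN(\sigma^b(k))\cap \cK^w_k$ together with the first identity of Lemma~\ref{lem: Kset}(iii) to place such a $w$ in $\bigcup_{j\le k-1}\cO^w_j \cup \cJ^w_k$. If $w \in \cO^w_j$ for some $j < k$, then $w \in \cN(\sigma^b(j))$; if instead $w \in \cJ^w_k$ has a nonempty neighbourhood, Lemma~\ref{lem: Kset}(i) yields $w = \sigma(\tau^b_j - 1)$ with $\sigma^b(j) \in \cN(w)$ for some $j \le k$, and the hypothesis $w \ne \sigma(\tau^b_k - 1)$ forces $j < k$, so again $w \in \cN(\sigma^b(j))$ with $j < k$. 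In both cases any black $b \in \cN(w)$ satisfies $b \in \cN(\cN(\sigma^b(j)))$; using bipartiteness (the intermediate white vertex $w$ lies in $\cN(\Sigma^b(\tau^b_j))$, being white and adjacent to $\sigma^b(j)$), either $b \in \Sigma^b(\tau^b_j)$ or $b \in \cN^2(\Sigma^b(\tau^b_j)) = \cA^b_j$. The second identity of Lemma~\ref{lem: Kset}(iv) then gives $b \in \bigcup_{j'\le j}\cO^b_{j'} \cup \Sigma^b(\tau^b_j) \subseteq V^b_n \setminus \cK^b_k$ since $j \le k-1$, so $b \notin \cK^b_k$, proving the claim.

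Granting the claim, only $w \in \cO^w_k = \{v_{k,i}\}_{1\le i\le O^w_k}$ and $w = \sigma(\tau^b_k - 1)$ (the latter exactly when the condition defining $\cN_0$ in~\eqref{def: Ni} holds, in which case $\sigma(\tau^b_k) = \sigma^b(k) \in \cN(w)$, so indeed $w \in \cN(\sigma^b(k))$) can contribute, giving
\[
\cO^b_k = \Big(\bigcup_{i=1}^{O^w_k}\cN(v_{k,i})\cap \cK^b_k\Big) \cup \big(\cN(\sigma(\tau^b_k-1))\cap \cK^b_k\big),
\]
the last term being empty when $\cN_0=\varnothing$.

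It then remains to match this with the sets $\cN_i$ from~\eqref{def: Ni}, which use the shrinking pools $\cK_{k,i}$; this is the routine disjointification. Since $\cK_{k,i+1} = \cK_{k,i}\setminus \cN_i$ and $\cN_i \subseteq \cK_{k,i}$, the sets $\cN_0, \cN_1, \dots, \cN_{O^w_k}$ are pairwise disjoint, and each $\cN_i \subseteq \cK^b_k$ sits inside $\cO^b_k$ because $v_{k,i}$ (resp.~$\sigma(\tau^b_k-1)$) is a white neighbour of $\sigma^b(k)$. Conversely, any $b \in \cO^b_k$ is adjacent to some contributing white neighbour; assigning it to the first $v_{k,i}$ with $b\in\cN(v_{k,i})$ — or to $\sigma(\tau^b_k-1)$ if $b$ is adjacent to none of the $v_{k,i}$ — shows $b \in \cN_i$ for that index, since $b$ survives in the pool up to that stage. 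This yields~\eqref{id: decomp} together with the disjointness, completing the proof.
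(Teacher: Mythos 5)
Your proof is correct and follows essentially the same route as the paper's: both reduce the identity to the key fact that a vertex of $\cO^b_k$ not reached through $\cO^w_k$ must be reached through $\sigma(\tau^b_k-1)$, using Lemma~\ref{lem: Kset} \emph{(i)}, \emph{(iii)}--\emph{(iv)} and the same dichotomy ($w\in\cO^w_j$ with $j\le k-1$, ruled out because then any black neighbour lands in $\bigcup_{j'\le k-1}\cO^b_{j'}\cup\Sigma^b(\tau^b_k)\subseteq V^b_n\setminus\cK^b_k$, versus $w\in\cJ^w_k$, pinned down via Lemma~\ref{lem: Kset} \emph{(i)}) — you merely organise it contrapositively, per white neighbour of $\sigma^b(k)$, where the paper argues per black vertex $u\in\cO^b_k\setminus\cN(\cO^w_k)$. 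The one assertion you leave slightly implicit, that $\cN(\sigma(\tau^b_k-1))\cap\cK^b_k=\varnothing$ whenever the condition defining $\cN_0$ fails, does follow from your own Case A/B analysis (together with bipartiteness when $\sigma(\tau^b_k-1)$ is black), so there is no genuine gap.
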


\begin{proof}
It is straightforward from the definition that $\cN_i\cap \cN_{i'}=\varnothing$ for any $0\le i'<i\le |\cO^w_k|$. 
To see why~\eqref{id: decomp} is true, we first deduce from~\eqref{def: Ni} that 
\[
\bigcup_{1\le i\le O^w_k}\cN_i=\cN(\cO^w_k)\cap \cK^b_k. 
\]
On the other hand, it is clear from its definition that $\cN_0\subseteq \cN^2(\sigma^b(k))\cap \cK^b_k=\cO^b_k$. 
Hence, for~\eqref{id: decomp} to hold, it suffices to show that 
\begin{equation}
\label{id: dec'}
\cO^b_k\setminus \cN(\cO^w_k) \subseteq \cN_0.
\end{equation}
Let $u\in \cO^b_k\setminus \cN(\cO^w_k)$, which in particular implies $u\in \cK^b_k\subseteq V^b_n\setminus \Sigma^b(\tau^b_k)$.  
Since $u\in \cN^2(\sigma^b(k))$, we can find some $v\in \cN(\sigma^b(k))$ so that $u\in \cN(v)$. According to~\eqref{eq: Nset}, we find that either $v\in \cup_{j\le k}\cO^w_j$ or $v\in\cJ^w_k$. From $u\notin\cN(\cO^w_k)$, we deduce that $v\notin \cO^w_k$. If $v\in \cO^w_j$ for some $j\le k-1$, then applying~\eqref{eq: Nset} and~\eqref{id: Kset}, we find
\[
u\in \cN^2\big(\Sigma^b(\tau^b_{k-1})\big)\subseteq \bigcup_{j\le k-1}\cO^b_j\subseteq V^b_n\setminus \cK^b_k,
\]
which contradicts with $u\in \cK^b_k$. So the only possibility is $v\in \cJ^w_k$. As $u\in \cN(v)$, Lemma~\ref{lem: Kset} {\it (i)} implies that $v=\sigma(\tau^b_j-1)\in \cJ^w_k$. However, if $j\le k-1$, then we would find $u\in \cN^2(\Sigma^b(\tau^b_{k-1}))$, which is not permitted by the previous arguments. We conclude that $v=\sigma(\tau^b_k-1)\in \cJ^w_k$ and $u\in \cN(v)$. Since $u\notin\cN(\cO^w_k)$, this then implies~\eqref{id: dec'}. 
\end{proof}

Let $n, k\in \N$ and $p\in [0, 1]$. 
Recall the notation $\cB(n, p)$ for the binomial distribution with parameters $n$ and $p$. We denote by $\cD(n, k, p)$ the distribution of the following sum of random variables: 
\begin{equation}
\label{def: D-dist}
X= \sum_{i=1}^k X_i, 
\end{equation}
where $X_1\sim \cB(n, p)$ and for $1\le i < k$, conditional on $(X_1, X_2, .., X_i)$, $X_{i+1}$ has the binomial distribution with parameters $n-\sum_{j=1}^{i} X_j$ and $p$. We also let $\cD(n, 0, p)$ denote the law of degenerate random variable $X\equiv 0$. 

Recall $K^w_k$, $K^w_k$, $O^b_k$ and $O^w_k$ from~\eqref{def: O-size}. 
The following result describes the distributions of $O^w_k$ and $O^b_k$. Recall the filtrations $(\cG^w_k)_{k\ge 1}$ and $(\cG^b_k)_{k\ge 1}$ from Section~\ref{sec: app}. Roughly speaking, $\cG^w_k$ represents the information available prior to the exploration of the $k$-th black vertex in Algorithm~\ref{algo: graph} with respect to Prim's ranking, while $\cG^b_k$ is the analogue for the 2-neighbourhood exploration in Algorithm~\ref{algo: 2-explore}. 

\begin{Prop}
\label{prop: incre}
Let $p\in [0, 1]$. The following statements hold for $G(n_b, n_w, p)$ and $k\in [n_b]$.
\begin{enumerate}[(i)]
\item 
Conditional on $\cG^w_k$, we have 
\[
O^w_{k} \sim \cB\big(K^w_{k}, p\big).
\]
\item 
Let $\cN_i$ be as in~\eqref{def: Ni} and set $\Delta_{k} = \sum_{1\le i\le O^w_k} |\cN_i|$. Then $O^b_k\ge \Delta_k$; if $O^b_k>\Delta_k$, then $\sigma(\tau^b_k-1)$ is a white lead vertex. 
Conditional on $\cG^b_k$, we have
\[
\Delta_{k}  \sim \mathcal{D}\big(K^b_k, O^w_{k}, p\big).
\] 
\end{enumerate}
\end{Prop}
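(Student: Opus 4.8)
The plan rests on the standard principle that, since the edge indicators $(\mathbf 1_{\{U_e\le p\}})_{e\in E_n}$ are i.i.d.~Bernoulli($p$), any family of edges whose states have not yet been revealed by the conditioning sigma-algebra is, conditionally, an independent Bernoulli($p$) family. Both parts then amount to pinpointing the batch of such \emph{fresh} edges queried at step $k$ and checking that none was examined earlier in the exploration.

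For {\it (i)}, I would start from $\cO^w_k=\cN(\sigma^b(k))\cap \cK^w_k$, so that
\[
O^w_k=\sum_{w\in \cK^w_k}\mathbf 1_{\{\sigma^b(k),w\}\in E_n(p)}.
\]
The crux is that the $K^w_k$ edges $\{\sigma^b(k),w\}$, $w\in \cK^w_k$, are fresh given $\cG^w_k$. Indeed, an edge incident to $\sigma^b(k)$ is probed before step $k$ only when the black neighbours of some already-discovered white vertex are explored; such a white vertex lies in some $\cO^w_j$ with $j<k$ or is a white lead, hence has been removed from the pool $\cK^w_k$ by~\eqref{def: Ow}. Thus no edge joining $\sigma^b(k)$ to the surviving pool has been examined. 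Once $\sigma^b(k)$ and $\cK^w_k$ are seen to be $\cG^w_k$-measurable, freshness gives that, conditionally on $\cG^w_k$, these indicators are i.i.d.~Bernoulli($p$), whence $O^w_k\sim \cB(K^w_k,p)$.

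For {\it (ii)}, the inequality and equality case are purely combinatorial: Lemma~\ref{lem: decomp} gives the disjoint decomposition $\cO^b_k=\bigcup_{0\le i\le O^w_k}\cN_i$, so $O^b_k=|\cN_0|+\Delta_k\ge \Delta_k$, and $O^b_k>\Delta_k$ forces $\cN_0\ne\varnothing$, which by the definition of $\cN_0$ requires $\sigma(\tau^b_k-1)\in \cJ^w_k$, i.e.~a white lead. For the law of $\Delta_k$, I would reveal $\cN_1,\cN_2,\dots$ one at a time. Writing $\cO^w_k=\{v_{k,1},\dots,v_{k,O^w_k}\}$ and $\cK_{k,1}=\cK^b_k$, each $|\cN_i|=|\cN(v_{k,i})\cap \cK_{k,i}|$ counts the edges from $v_{k,i}$ into the current black pool $\cK_{k,i}$. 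These are fresh given everything revealed through $\cN_{i-1}$: the black neighbours of $v_{k,i}$ are never explored before step $k$ (as $v_{k,i}\notin\bigcup_{j<k}\cO^w_j$); the only edges incident to $v_{k,i}$ inspected earlier are the tests $\{\sigma^b(j),v_{k,i}\}$, $j\le k$, performed in the white-neighbour steps, all of whose black endpoints land in $\Sigma^b(\tau^b_k)$ and hence outside $\cK^b_k\supseteq\cK_{k,i}$; and within step $k$ only edges incident to $v_{k,1},\dots,v_{k,i-1}$ have been probed. Consequently, conditionally on the information through $\cN_{i-1}$, one has $|\cN_i|\sim \cB(|\cK_{k,i}|,p)$ with $|\cK_{k,i}|=K^b_k-\sum_{j<i}|\cN_j|$, which is exactly the nested-binomial recursion defining $\cD(\cdot,\cdot,\cdot)$ in~\eqref{def: D-dist}. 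Hence $\Delta_k\sim \cD(K^b_k,O^w_k,p)$ conditionally on $\cG^b_k$.

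The hardest part will be the freshness bookkeeping: making rigorous, against the precise definitions of $\cG^w_k$ and $\cG^b_k$ in Section~\ref{sec: app}, that the batch of edges probed at each (sub)step is disjoint from the edges whose states are encoded in the conditioning sigma-algebra, and that $\sigma^b(k)$, $\cK^w_k$, $\cO^w_k$, the ordering $(v_{k,i})_i$, and $\cK^b_k$ are all measurable with respect to the appropriate filtration. Once these two points are settled, the conditional Bernoulli and nested-binomial structures follow immediately from the i.i.d.~edge weights.
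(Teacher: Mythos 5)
Your proposal is correct and takes essentially the same route as the paper: for \emph{(i)} you identify the fresh edge batch $\{\{\sigma^b(k),w\}: w\in \cK^w_k\}$ (the paper's $L_k\subseteq G^w_k$) and use its conditional i.i.d.~uniformity given $\cG^w_k$, and for \emph{(ii)} you combine Lemma~\ref{lem: decomp} with the sequential revelation of $\cN_1,\cN_2,\dots$ through the disjoint batches $\{\{u,v_{k,i}\}: u\in \cK^b_k\}$ (the paper's $M_i\subseteq G^b_k$), yielding the nested binomials of $\cD(K^b_k,O^w_k,p)$ exactly as in the paper's proof. The ``freshness bookkeeping'' you flag as the hard part is precisely what Lemmas~\ref{lem: Gk} and~\ref{lem: Gk2} of Appendix~\ref{sec: app} provide, and your accounting of which edges are already revealed (earlier tests $\{\sigma^b(j),v_{k,i}\}$ with $j\le k$ having black endpoints in $\Sigma^b(\tau^b_k)$, hence outside $\cK^b_k$) matches their content, so your argument closes once you cite them.
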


\begin{proof}
Recall that $\sigma^b(k)=\sigma(\tau^b_k)$. 
For {\em (i)}, we have  
\[
\cO^w_k = \cN\big(\sigma^b(k)\big)\cap \cK^w_{k} = \big\{v\in \cK^w_{k}: U_{\{\sigma^b(k), v\}}\le p\big\}.
\]
Since $L_k:=\{e=\{\sigma^b(k), v\}: v\in \cK^w_k\}$ is contained in the set $G^w_k$ introduced in~\eqref{def: Gwkset} and is $\cG^w_k$-measurable, we apply Lemma~\ref{lem: Gk} to see that conditional on $\cG^w_k$, $\{U_e: e\in L_k\}$ are i.i.d.~uniformly distributed. It follows that conditional on $\cG^w_k$, $O^w_k$ has the $\cB(K^w_k, p)$ distribution. 

Proceeding to {\it (ii)}, we note that $O^b_k\ge \Delta_k$ is an immediate consequence of Lemma~\ref{lem: decomp}. Moreover, $O^b_k>\Delta_k$ implies $\cN_0\ne\varnothing$, in which case $\sigma(\tau^b_k-1)\in \cJ^w_k$ by the definition of $\cN_0$. 
For the distribution of $\Delta_k$,  it suffices to consider the case $O^w_k=m\in \N$, since the claim is trivially true if $O^w_k=0$. We have 
\[
\cN_1=\{u\in \cK^b_{k}: U_{\{u, v_{k, 1}\}}\le p\}.
\]
Since $v_{k, 1}\in \cO^w_k\subseteq\cK^w_k$, the edge set $\{\{u, v_{k, 1}\}: u\in \cK^b_{k}\}$ is a subset of the set $G^b_k$ defined in~\eqref{def: Gbkset}. According to Lemma~\ref{lem: Gk2} {\em (ii)}, we have $|\cN_1|\sim \cB(K^b_k, p)$ conditional on $\cG^b_k$. More generally, let us denote 
\[
M_i = \big\{\{u, v_{k, i}\}: u\in \cK^b_k\big\}, \quad 1\le i\le m.
\]
Then $M_i, 1\le i\le m$, are disjoint subsets of $G^b_k$. Lemma~\ref{lem: Gk2} {\em (ii)} applies to yield that 
\[
\{U_e: e\in M_i\}, \quad 1\le i\le m,
\]
are independent, and each is distributed as a collection of i.i.d.~uniform variables given $\cG^b_k$. 
We note that 
\[
\cN_i = \{u\in \cK_{k, i}: U_{\{u, v_{k, i}\}}\le p\} 
\]
is a function of $\{U_e: e\in M_i\}$ and $(\cN_j)_{j<i}$. 
We conclude that conditional on $\cG^b_k$ and $(\cN_j)_{j<i}$, $|\cN_i|\sim \cB(|\cK_{k, i}|, p)$. 
This then implies {\em (iii)} as $|\cK_{k, i}|=K^b_k-\sum_{j<i}|\cN_j|$. 
\end{proof}

Based on the distributions identified in Proposition~\ref{prop: incre}, we next derive upper and lower bounds for $O^w_k$ and $O^b_k$. We will need the following elementary property of the binomial distribution, whose proof is omitted.

\begin{Lem}
\label{lem: binom}
Let $\cG$ be a sigma-algebra and $m\in \N$, $p\in [0, 1]$. Suppose that $Y$ is $\Z$-valued random variable satisfying $0\le Y\le m$ and $Y\in \cG$. Suppose further that conditional on $\cG$, $X\sim \cB(Y, p)$. 
Let $\xi$ be such a random variable that conditional on $\cG$, $\xi\sim \cB(m-Y, p)$ and is independent of $X$. Then $X+\xi$ has the $\cB(m, p)$ distribution and is independent of $\cG$.
\end{Lem}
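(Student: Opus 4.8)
The plan is to compute the conditional probability generating function (PGF) of $X+\xi$ given $\cG$ and to observe that it is deterministic, equal to the PGF of $\cB(m,p)$. A regular conditional distribution that does not depend on the conditioning $\sigma$-algebra simultaneously identifies the (unconditional) law and forces independence, so both assertions of the lemma drop out at once.

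Concretely, I would proceed in three short steps. First, record the two conditional PGFs. Since $Y$ is $\cG$-measurable and, conditional on $\cG$, $X\sim\cB(Y,p)$, for every $s\in[0,1]$ we have
\[
\mathbb E\big[s^{X}\,\big|\,\cG\big] = (1-p+ps)^{Y},
\]
and likewise, using that conditionally $\xi\sim\cB(m-Y,p)$,
\[
\mathbb E\big[s^{\xi}\,\big|\,\cG\big] = (1-p+ps)^{m-Y}.
\]
Second, invoke the conditional independence of $X$ and $\xi$ given $\cG$ to factor the joint conditional PGF, whence
\[
\mathbb E\big[s^{X+\xi}\,\big|\,\cG\big] = \mathbb E\big[s^{X}\,\big|\,\cG\big]\,\mathbb E\big[s^{\xi}\,\big|\,\cG\big] = (1-p+ps)^{m}.
\]
The crucial feature is that the exponent $Y$ has cancelled: the right-hand side is a deterministic function of $s$ alone. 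Third, conclude: for any $A\in\cG$, taking expectations of $s^{X+\xi}\mathbf 1_A$ against the identity above gives $\mathbb E[s^{X+\xi}\mathbf 1_A]=(1-p+ps)^{m}\,\mathbb P(A)$, which is precisely the statement that $X+\xi$ is independent of $\cG$ and has the $\cB(m,p)$ PGF, hence the $\cB(m,p)$ distribution.

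A purely combinatorial variant would bypass generating functions: condition on $\{Y=y\}\in\cG$, write $\mathbb P(X+\xi=j\mid\cG)=\sum_i\mathbb P(X=i\mid\cG)\,\mathbb P(\xi=j-i\mid\cG)$ by conditional independence, and collapse the sum $\sum_i\binom{Y}{i}\binom{m-Y}{j-i}=\binom{m}{j}$ via the Vandermonde identity, leaving $\binom{m}{j}p^{j}(1-p)^{m-j}$, which again is free of $Y$. Either route is routine, and there is no genuine obstacle here; the only point demanding care is the bookkeeping of conditional independence together with the $\cG$-measurability of $Y$, so that $(1-p+ps)^{Y}$ may legitimately be treated as a conditionally constant factor and pulled out before the exponents combine. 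The substance of the lemma is exactly this cancellation of $Y$, which is what will later let us glue the partially explored binomial counts in Proposition~\ref{prop: incre} back into a clean $\cB(m,p)$ that is independent of the past.
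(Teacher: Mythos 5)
Your proof is correct. Note that the paper itself gives no argument here: the lemma is stated with the remark ``whose proof is omitted'', so there is nothing to compare against, and your write-up simply supplies the missing elementary proof. The conditional PGF route is exactly the standard one: $\cG$-measurability of $Y$ gives $\mathbb E[s^{X}\mid\cG]=(1-p+ps)^{Y}$ and $\mathbb E[s^{\xi}\mid\cG]=(1-p+ps)^{m-Y}$, conditional independence factors the product into the deterministic $(1-p+ps)^{m}$, and integrating against $\mathbf 1_A$ for $A\in\cG$ and matching coefficients of the degree-$m$ polynomial in $s$ yields both the $\cB(m,p)$ law and independence of $\cG$ at once; your Vandermonde variant, which shows the conditional law of $X+\xi$ given $\cG$ is $\cB(m,p)$ outright, is equally valid.
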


We recall that $\cJ^w_k$ contains all the white lead vertices of ranks up to $\tau^b_k$. We denote by $J^w(k)$ its cardinality. 

\begin{Lem}[Upper bounds for $O^w_k$ and $O^b_k$]
\label{lem: upbd}
Let $p\in [0, 1]$ and $\Delta_k$ be as in Proposition~\ref{prop: incre}. 
There exist sequences of random variables $(\bar\xi^w_k)_{k\in [n_b]}$ and $(\bar\xi^b_k)_{k\in [n_b]}$ such that 
\begin{enumerate}[(i)]
\item 
$\{\bar\xi^w_k: k\in [n_b]\}$ is a collection of i.i.d.~with common distribution $\cB(n_w, p)$;
\item
conditional on $(O^w_k)_{k\in [n_b]}$, $\{\bar\xi^b_k: k\in [n_b]\}$ is a collection of independent random variables where $\bar\xi^b_k$ is distributed according to $\cB(n_b\cdot O^w_k, p)$;
\item 
$O^w_k\le \bar\xi^w_k$ and $\Delta_k\le \bar\xi^b_k$ for all $k\in [n_b]$.
\end{enumerate}
Moreover, we can find a sequence of i.i.d.~random variables $(\chi_{i})_{i\ge 1}$ with common distribution $\cB(n_b, p)$ such that for all $k_0\in [n_b]$, 
\begin{equation}
\label{bd: Delta}
\sum_{1\le k\le k_0}(O^b_k-\Delta_k) \le \sum_{1\le i\le J^w(k_0)} \chi_{i}\,;
\end{equation}
\end{Lem}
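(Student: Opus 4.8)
The plan is to obtain every dominating variable by \emph{completing} the binomials identified in Proposition~\ref{prop: incre} up to their maximal parameter, using Lemma~\ref{lem: binom}, on an enlarged probability space carrying an independent reservoir of auxiliary randomness adapted to the two exploration filtrations $(\cG^w_k)_{k}$ and $(\cG^b_k)_k$.

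First I construct $(\bar\xi^w_k)$. By Proposition~\ref{prop: incre}(i), $O^w_k\sim \cB(K^w_k,p)$ given $\cG^w_k$, and since $\cK^w_k\subseteq V^w_n$ we have $K^w_k\le n_w$ with $K^w_k$ being $\cG^w_k$-measurable. I apply Lemma~\ref{lem: binom} with $\cG=\cG^w_k$, $m=n_w$, $Y=K^w_k$ and $X=O^w_k$, adjoining a fresh $\cB(n_w-K^w_k,p)$ variable to set $\bar\xi^w_k:=O^w_k+(\text{that variable})$. Then $\bar\xi^w_k\sim\cB(n_w,p)$, $\bar\xi^w_k\ge O^w_k$, and $\bar\xi^w_k$ is independent of $\cG^w_k$. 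Choosing the auxiliary randomness so that $\bar\xi^w_{k'}$ is $\cG^w_k$-measurable for every $k'<k$, the independence from $\cG^w_k$ upgrades to mutual independence of the whole family, giving (i) and (iii) for the white increments.

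Next I treat $\Delta_k$. Recall from Lemma~\ref{lem: decomp} and the proof of Proposition~\ref{prop: incre}(ii) that $\Delta_k=\sum_{1\le i\le O^w_k}|\cN_i|$, where, conditionally on $\cG^b_k$ and $(\cN_j)_{j<i}$, one has $|\cN_i|\sim\cB(|\cK_{k,i}|,p)$ with $|\cK_{k,i}|=K^b_k-\sum_{j<i}|\cN_j|\le n_b$. Iterating Lemma~\ref{lem: binom} along the filtration generated by $\cG^b_k$ and the successive $\cN_i$ (enriched at each step by fresh auxiliary randomness), I complete each $|\cN_i|$ to a variable $\bar\eta_{k,i}\ge|\cN_i|$ with $\bar\eta_{k,i}\sim\cB(n_b,p)$; arranging the auxiliary randomness so that the past is always absorbed into the conditioning $\sigma$-algebra makes the whole family $\{\bar\eta_{k,i}\}$ i.i.d.\ $\cB(n_b,p)$ and independent of $(O^w_k)_k$. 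Setting $\bar\xi^b_k:=\sum_{1\le i\le O^w_k}\bar\eta_{k,i}$ then yields $\Delta_k\le\bar\xi^b_k$, and conditionally on $(O^w_k)_k$ the $\bar\xi^b_k$ are independent with $\bar\xi^b_k\sim\cB(n_b O^w_k,p)$, since they are sums over disjoint blocks of the i.i.d.\ family; this gives (ii) and the remaining part of (iii).

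Finally, for \eqref{bd: Delta} I use Proposition~\ref{prop: incre}(ii): $O^b_k\ge\Delta_k$ and the excess equals $|\cN_0|$, which vanishes unless $\sigma(\tau^b_k-1)$ is a white lead vertex, in which case $\cN_0=\cN(\sigma(\tau^b_k-1))\cap\cK_{k,O^w_k+1}$ is drawn from a pool of black vertices of size at most $n_b$, so $|\cN_0|$ is again dominated by a $\cB(n_b,p)$ completion. The vertices $\sigma(\tau^b_k-1)$ are pairwise distinct in $k$, and whenever the excess is positive the corresponding vertex lies in $\cJ^w_{k_0}$ for every $k_0\ge k$; hence the contributing indices inject into the white lead vertices counted by $J^w(k_0)$. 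Enumerating the white lead vertices in increasing rank and, exactly as above, completing the relevant $|\cN_0|$ (or adjoining a fresh variable when a lead contributes nothing) to an i.i.d.\ family $(\chi_i)$ of $\cB(n_b,p)$ variables, I get $O^b_k-\Delta_k\le\chi_{i(k)}$ with distinct indices $i(k)\le J^w(k_0)$, and summing over them gives \eqref{bd: Delta}.

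The domination inequalities themselves are immediate from Proposition~\ref{prop: incre} and Lemma~\ref{lem: binom}. The main obstacle is the measure-theoretic bookkeeping: enlarging the probability space and interleaving the auxiliary randomness with the exploration filtrations---relying on the appendix lemmas that control which edge weights are revealed at each step---so that the completed variables are genuinely i.i.d.\ (respectively conditionally independent given $(O^w_k)_k$) while simultaneously dominating the true increments for every $k$ and every $k_0$.
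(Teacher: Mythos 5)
Your proposal is correct and follows essentially the same route as the paper's proof: you complete the conditional binomials of Proposition~\ref{prop: incre} via Lemma~\ref{lem: binom} on an enlarged space with the past auxiliary variables absorbed into enlarged filtrations (the paper's $\bar\cG^w_k$), and you index the $(\chi_i)$ by the white lead vertices so that the excess $O^b_k-\Delta_k=|\cN_0|$ injects into $[J^w(k_0)]$, exactly as the paper does. The only cosmetic deviations are your phrase ``$\bar\xi^w_{k'}$ is $\cG^w_k$-measurable'', which should read measurable with respect to the \emph{enlarged} $\sigma$-algebra since the auxiliary randomness is not a function of the exploration, and that the paper dominates $|\cN_0|$ by the full lead-vertex neighbourhood $|\cN(\sigma(\zeta_{\ell}))|$ conditional on the stopping-time $\sigma$-algebra $\cH_{\ell}$ of Lemma~\ref{lem: Hk}, rather than completing $|\cN_0|$ directly.
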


\begin{proof}
We begin with the construction of $(\bar \xi^w_k)_{k\in [n_b]}$. For $k\in [n_b]$, let $\xi_k$ be a random variable such that conditional on $\cG^w_k$, it is distributed according to $\cB(n_w-K^w_k, p)$ and independent of $\mathcal U$ and $(\xi_i)_{i\le k-1}$; also let $\bar\cG^w_k$ be the sigma-algebra generated by $\cG^w_k$ and $(\xi_i)_{i\le k-1}$. We then set for each $k\in [n_b]$: 
\[
\bar \xi^w_k = O^w_k + \xi_k\ge O^w_k. 
\]
Let us argue that $\bar\xi^w_k\in \bar\cG^w_{k+1}$, $\bar\xi^w_k\sim \cB(n_w, p)$ and is independent of $\bar\cG^w_{k}$, which will imply {\it (i)}. 
The fact that $\bar\xi^w_k\in \bar\cG^w_{k+1}$ is a direct consequence of $O^w_k\in \cG^w_{k+1}$ (Lemma~\ref{lem: Gk}) and the definition of $\bar\cG^w_k$. We will prove the two remaining properties by induction. 
For $k=1$, this is a direct consequence of Lemma~\ref{lem: binom} since $\bar\cG^w_1=\cG^w_1$. Assume that the properties hold for all $i\le k-1$.  Proposition~\ref{prop: incre} says that conditional on $\cG^w_k$, $O^w_k\sim\cB(K^w_k, p)$. Since by construction $\xi_i$ is conditionally independent of $\mathcal U$ given $\cG^w_i$, and since $\cG^w_k$ is obtained from $\cG^w_i$ and a subset of $\mathcal U$, $\xi_i$ is also conditionally independent of $\mathcal U$ given $\cG^w_k$, and therefore conditionally independent of $O^w_k$ given $\cG^w_k$. As $\bar\cG^w_k$ is obtained from $\cG^w_k$ and $(\xi_i)_{i\le k-1}$, we deduce that given $\bar\cG^w_k$, $O^w_k$ is still distributed as $\cB(K^w_k, p)$. Applying Lemma~\ref{lem: binom} yields that $\bar\xi^w_k\sim\cB(n_w, p)$ and is independent of $\bar\cG^w_k$. 

The construction for $(\bar\xi^b_k)_{k\in [n_b]}$ is similar albeit slightly more involved. Recall the sets $\cN_i$ from~\eqref{def: Ni}. Proposition~\ref{prop: incre} combined with~\eqref{def: D-dist} says that $\Delta_k=\sum_{1\le i\le O^w_k}|\cN_i|$ is a sum of binomial variables, with $|\cN_1|\sim \cB(K^b_k, p)$. 
Note that $O^w_k\in \cG^b_k$. 
Applying Lemma~\ref{lem: binom} to $|\cN_1|$ and $\cG^b_k$, we can find a random variable $\xi_{k, 1}$ such that given $\cG^b_k$, $\xi_{k, 1}\sim \cB(n_b, p)$ and $|\cN_1|\le \xi_{k, 1}$. We then note that conditional on $\cG^b_k$ and $|\cN_1|$, $|\cN_2|\sim \cB(K^b_k-|\cN_1|, p)$. Applying Lemma~\ref{lem: binom} allows us to find some $\xi_{k, 2}$ such that given $\cG^b_k$ and $|\cN_1|$, $\xi_{k, 2}\sim \cB(n_b, p)$, is independent of $\xi_{k, 1}$ and satisfies $|\cN_2|\le \xi_{k, 2}$. Iterating this procedure, we can then find a collection $\{\xi_{k,i}: 1\le i\le O^w_k, k\in [n_b]\}$ such that for $k\in [n_b]$, 
conditional on $\cG^b_{k}$ and $\{\xi_{j, i}: 1\le i\le O^w_i, j\le k-1\}$, $\{\xi_{k, i}: 1\le i\le O^w_k\}$ is 
i.i.d. $\cB(n_b, p)$-distributed, independent of $\mathcal U$ and verifies
\[
\Delta_k\le \bar\xi^b_k:=\sum_{i=1}^{O^w_k}\xi_{k, i}. 
\]
It follows that given $(O^w_i)_{i\le k}$, $(\bar\xi^b_i)_{i\le k}$ is an independent sequence and $\bar\xi^b_i\sim \cB(O^w_i\cdot n_b, p)$. 

It remains to construct $(\chi_{\ell})_{\ell\ge 1}$. 
Recall $\zeta_{\ell}$ and $\cH_{\ell}$ from Section~\ref{sec: app}. 
Lemma~\ref{lem: Hk} implies that $\{\sigma(\zeta_{\ell})\in V^w_n\}$ is $\cH_{\ell}$-measurable. Moreover, since $\sigma(\zeta_{\ell})$ is a lead vertex, and therefore disconnected from any lower ranking vertices, we obtain from Lemma~\ref{lem: Hk} that given $\cH_{\ell}$ and $\sigma(\zeta_{\ell})\in V^w_n$, we have 
\[
\big|\cN\big(\sigma(\zeta_{\ell})\big)\big| = \big|\cN\big(\sigma(\zeta_{\ell})\big)\setminus \Sigma^b(\zeta_{\ell})\big|\sim \cB(n_b-|\Sigma^b(\zeta_{\ell})|, p).
\]
Applying Lemma~\ref{lem: binom} then allows us to find  a sequence $(\chi'_{\ell})_{\ell\ge 1}$ such that given $\cH_{\ell}$, $\chi'_{\ell}\sim \cB(n_b, p)$ and is independent of $(\chi'_i)_{i<\ell}$, and  for all $\ell\ge 1$, we also have
\begin{equation}
\label{bd: zetal}
\big|\cN\big(\sigma(\zeta_{\ell})\big)\big|  \mathbf 1_{\{\sigma(\zeta_{\ell})\in V^w_n\}}\le \chi'_{\ell}\mathbf 1_{\{\sigma(\zeta_{\ell})\in V^w_n\}}. 
\end{equation}
It follows that $\{\chi_{i}: i\ge 1\}:=\{\chi'_{\ell}: \sigma(\zeta_{\ell})\in V^w_n, \ell\ge 1\}$ is i.i.d.~with common distribution $\cB(n_b, p)$. 

It remains to show~\eqref{bd: Delta}. To that end, we recall from Lemma~\ref{lem: decomp} and Proposition~\ref{prop: incre} that $O^b_k-\Delta_k=|\cN_0|$. We also recall from the definition of $\cN_0$ that $|\cN_0|>0$ implies that we can find some $\ell\ge 1$ so that $\tau^b_k-1=\zeta_{\ell}$ and $\sigma(\zeta_{\ell})\in V^w_n$. Hence, 
\begin{align*}
O^b_k-\Delta_k & =  \sum_{\ell\ge 1}|\cN_0|\mathbf 1_{\{\tau^b_k-1=\zeta_{\ell}, \sigma(\zeta_{\ell})\in V^w_n\}} 
\le \sum_{\ell\ge 1}\big|\cN(\sigma(\zeta_{\ell}))\big|\mathbf 1_{\{\tau^b_k-1=\zeta_{\ell},\sigma(\zeta_{\ell})\in V^w_n\}} \\ 
&\le  \sum_{\ell\ge 1}\chi'_{\ell}\mathbf 1_{\{\tau^b_k-1=\zeta_{\ell}, \sigma(\zeta_{\ell})\in V^w_n\}},
\end{align*}
where in the second inequality above we have used the definition of $\cN_0$ and~\eqref{bd: zetal} in the last line. 
Summing over $k\le k_0$ we find that
\[
\sum_{k\le k_0}(O^b_k-\Delta_k)\le \sum_{k\le k_0}\sum_{\ell\ge 1}\chi'_{\ell}\mathbf 1_{\{\tau^b_k-1=\zeta_{\ell}, \sigma(\zeta_{\ell})\in V^w_n\}} \le  \sum_{\ell\ge 1}\chi'_{\ell}\mathbf 1_{\{\zeta_{\ell}\le \tau^b_{k_0}, \,\sigma(\zeta_{\ell})\in V^w_n\}}=\sum_{\ell\ge 1}\chi'_{\ell}\mathbf 1_{\{\sigma(\zeta_{\ell})\in \cJ^w_{k_0}\}}, 
\]
where the last term is equal to the sum of the first $J^w(k_0)$ entries of $(\chi_i)_{i\ge 1}$. The proof is complete.
\end{proof}

\begin{Lem}[Lower bounds for $O^w_k$ and $O^b_k$]
\label{lem: lowbd}
Let $\underline n_w, \underline n_b\in \N$. There exist sequences of random variables $(\underline \xi^w_k)_{k\in [n_b]}$ and $(\underline \xi^b_k)_{k\in [n_b]}$ that satisfy 
\begin{enumerate}[(i)]
\item 
$\{\underline \xi^w_k: k\in [n_b]\}$ is a collection of i.i.d.~random variables with common distribution $\cB(\underline n_w, p)$;
\item
conditional on $(O^w_k)_{k\in [n_b]}$, $\{\underline \xi^b_k: k\in [n_b]\}$ is a collection of independent variables and $\underline\xi^b_k\sim \cB(O^w_k \underline n_b, p)$ for each $k\in [n_b]$.  
\item 
$O^w_k\ge \underline \xi^w_k\mathbf 1_{\{K^w_k \ge \underline n_w\}}$ and $O^b_k\ge \underline \xi^b_k\mathbf 1_{\{K^w_{k+1}\ge \underline n_b\}}$ for all $k\in [n_b]$.
\end{enumerate}
\end{Lem}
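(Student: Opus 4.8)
The plan is to mirror the construction of Lemma~\ref{lem: upbd}, but in the opposite direction: instead of \emph{completing} each neighbourhood count to a full binomial by adding independent mass, I would \emph{carve out} a sub-binomial of the prescribed size from each count, exploiting the fact (used in Proposition~\ref{prop: incre}) that both $O^w_k$ and the increments $|\cN_i|$ are sums of conditionally i.i.d.\ Bernoulli indicators attached to explicit, pairwise disjoint edge sets. The one tool I would invoke repeatedly is Lemma~\ref{lem: binom} in its completion role: whenever the available pool is too small to yield a full $\cB(\underline n_w,p)$ (resp.\ $\cB(\underline n_b,p)$), I top up the partial count with fresh external randomness so the result has exactly the target law. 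The indicator in (iii) is precisely what kills this top-up on the good event, leaving a genuine lower bound.

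\textbf{White side.} By Proposition~\ref{prop: incre}(i), conditionally on $\cG^w_k$ the count $O^w_k=\sum_{v\in\cK^w_k}\mathbf 1_{\{U_{\{\sigma^b(k),v\}}\le p\}}$ is $\cB(K^w_k,p)$, the underlying edge weights being conditionally i.i.d.\ uniform. I would set $Y_k=K^w_k\wedge\underline n_w$, which is $\cG^w_k$-measurable with $Y_k\le K^w_k$, select the $Y_k$ lowest-ranking vertices of $\cK^w_k$, and let $X_k$ count those whose edge to $\sigma^b(k)$ has weight $\le p$. Then $X_k\le O^w_k$ and, given $\cG^w_k$, $X_k\sim\cB(Y_k,p)$. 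Applying Lemma~\ref{lem: binom} with $\cG=\cG^w_k$, $Y=Y_k$, $m=\underline n_w$ and independent fresh randomness $\xi_k$ gives $\underline\xi^w_k:=X_k+\xi_k\sim\cB(\underline n_w,p)$, independent of $\cG^w_k$. On $\{K^w_k\ge\underline n_w\}$ one has $Y_k=\underline n_w$, so $\xi_k\equiv 0$ and $\underline\xi^w_k=X_k\le O^w_k$, which is the first bound in (iii). Property (i) then follows exactly as in Lemma~\ref{lem: upbd}: the edge sets $\{\{\sigma^b(k),v\}:v\in\cK^w_k\}$ have pairwise distinct black endpoints across $k$, hence are disjoint, so the augmented-filtration induction shows the $\underline\xi^w_k$ are i.i.d.

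\textbf{Black side.} Since $O^b_k\ge\Delta_k=\sum_{1\le i\le O^w_k}|\cN_i|$ by Lemma~\ref{lem: decomp} and Proposition~\ref{prop: incre}(ii), it suffices to bound $\Delta_k$ from below. For each $i$, recall $\cN_i=\cN(v_{k,i})\cap\cK_{k,i}$ from~\eqref{def: Ni}, which is $\cB(|\cK_{k,i}|,p)$ given the earlier data. I would run the white-side recipe inside the inner loop: set $Y_{k,i}=|\cK_{k,i}|\wedge\underline n_b$, extract a $\cB(Y_{k,i},p)$ sub-count of $|\cN_i|$, complete it to $\eta_{k,i}\sim\cB(\underline n_b,p)$ via Lemma~\ref{lem: binom}, and put $\underline\xi^b_k=\sum_{1\le i\le O^w_k}\eta_{k,i}$, which is $\cB(O^w_k\underline n_b,p)$ given $(O^w_k)_k$. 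The conditional independence in (ii) comes from the disjointness of the edge blocks $M_i=\{\{u,v_{k,i}\}:u\in\cK^b_k\}$ across both $i$ and $k$ (the white neighbours $v_{k,i}$ being distinct), established by the same conditional-independence argument used in the proof of Proposition~\ref{prop: incre}.

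\textbf{Main obstacle.} The delicate point is the indicator in the second bound of (iii): a single event must control every stage $i$ of the inner loop, whose pools $\cK_{k,i}$ shrink with $i$. Here I would use the nesting $\cK^b_{k+1}\subseteq\cK_{k,O^w_k+1}\subseteq\cK_{k,i}$ for all $i\le O^w_k$, which follows from $\bigcup_{1\le i\le O^w_k}\cN_i=\cO^b_k\setminus\cN_0$ (Lemma~\ref{lem: decomp}) together with $\cK^b_{k+1}\subseteq\cK^b_k\setminus\cO^b_k$. Consequently the event that at least $\underline n_b$ black vertices remain, i.e.\ $\{K^b_{k+1}\ge\underline n_b\}$, forces $Y_{k,i}=\underline n_b$ at every stage, so all completions vanish and $\underline\xi^b_k=\sum_i\eta_{k,i}\le\sum_i|\cN_i|=\Delta_k\le O^b_k$. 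The remaining work — propagating the auxiliary randomness through the filtrations $\cG^w_k$, $\cG^b_k$ so the final joint law is exactly as stated — is routine but bookkeeping-heavy, running parallel to the proof of Lemma~\ref{lem: upbd}.
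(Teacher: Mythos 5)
Your proposal is correct and takes essentially the same route as the paper's own proof: restrict the count to a $\cG^w_k$-measurable pool of $\underline n_w$ (resp.\ $\underline n_b$) lowest-ranking vertices, supply independent auxiliary randomness when the pool is too small, and use the nesting $\cK^b_{k+1}\subseteq\cK_{k,O^w_k+1}\subseteq\cK_{k,i}$ so that the single event $\{K^b_{k+1}\ge\underline n_b\}$ controls every stage of the inner loop. The only cosmetic difference is that on the bad event you top up the partial count via Lemma~\ref{lem: binom}, whereas the paper substitutes a fully external $\cB(\underline n_w,p)$ variable; both give the same conditional law, hence the same independence and the same bound in (iii).
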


\begin{proof}
Let us denote the event $A_k=\{K^w_k\ge \underline n_w\}$. We note that $A_k\in \cG^w_k$ thanks to Lemma~\ref{lem: Gk}. 
Moreover, we have seen in the proof of Proposition~\ref{prop: incre} that given $\cG^w_1$, 
\[
\big\{\mathbf 1_{U_e\le p}: e=\{\sigma^b(1), v\}, v\in \cK^w_1\}
\]
is a collection of i.i.d.~Bernoulli$(p)$ variables. On the event $A_1$, let $\cK'_1$ be the subset of $\cK^w_1$ consisting of the $\underline n_w$ lowest ranking elements, and note that $\cK'_1\in\cG^w_1$. We then set 
\[
\underline \xi^w_1 = \big|\big\{\mathbf 1_{U_e\le p}: e=\{\sigma^b(1), v\}, v\in \cK'_1\big\}\big|
\]
on $A_1$, while on $A_1^c$, simply let $\underline\xi^w_1$ be a variable of $\cB(\underline n_w, p)$-distribution, independent of $\cG^w_1$.
It is then clear that $\underline\xi^w_1 \le O^w_1$ on $A_1$ and $\underline\xi^w_1 \sim \cB(\underline n_w, p)$. Since the distribution of  $\underline\xi^w_1$ remains the same on $A_1$ and $A_1^c$, we deduce that $\underline\xi^w_1$ is in fact  
independent of $\cG^w_1$. Let $\underline\cG^w_2$ be the sigma-algebra generated by $\cG^w_2$ and $\underline\xi^w_1$. Equivalently, $\underline\cG^w_2$ is generated by $\cG^w_2$ and $\underline\xi^w_1\mathbf 1_{A^c_1}$. Due to the independence of $\underline\xi^w_1\mathbf 1_{A^c_1}$ from $\cG^w_1$, the distribution of $O^w_2$ given $\underline\cG^w_2$ is identical to its distribution given $\cG^w_2$. We can then repeat the previous procedure to obtain a sequence $(\underline\xi^w_k)_{k\in [n_b]}$ and the filtration $\underline\cG^w_k$ generated by $\cG^w_k$ and $(\underline\xi^w_j\mathbf 1_{A^c_j})_{j\le k-1}$ so that for $1\le k\le n_b$, 
\[
\underline\xi^w_k\sim\cB(\underline n_w, p), \quad \underline\xi^w_k\mathbf 1_{A_k}\le O^w_k, \quad \underline\xi^w_k\in \underline\cG^w_{k+1}, \ \text{ and } \underline\xi^w_k \text{ is independent of } \underline\cG^w_{k}. 
\]
This completes the construction of $(\underline\xi^w_k)_{k\ge 1}$. 

The construction of $\underline \xi^b_k$ is similar. Since $O^b_k\ge \Delta_k$, as seen in Proposition~\ref{prop: incre}, we only need to provide a lower bound for $\Delta_k=\sum_{1\le i\le O^w_k}|\cN_i|$, where $\cN_i$ has the $\cB(|\cK_{k, i}|, p)$-distribution. 
On the event $|\cK_{k, i}|\ge \underline n_b$, as in the previous case we can find some $\xi_{k, i}\sim \cB(\underline n_b, p)$ satisfying $\xi_{k, i}\le |\cN_i|$. On the event  $|\cK_{k, i}|< \underline n_b$, simply let $\xi_{k, i}\sim \cB(\underline n_b, p)$ be an independent variable. Note that  $|\cK_{k, i}|< \underline n_b$ implies $\{K^b_{k+1}< \underline n_b\}$. It follows that 
\[
O^b_k \ge \underline\xi^b_k\mathbf 1_{K^b_{k+1}\ge \underline n_b}, \quad\text{where } \underline\xi^b_k:=\sum_{1\le i\le O^w_k}\xi_{k, i}. 
\]
The distribution properties of $(\underline\xi^b_k)$ can be argued similarly as in the previous case.  
\end{proof}

We end the section with a technical note on the previous proofs. In the proof of Lemma~\ref{lem: upbd}, it might appear more natural to take $\hat\xi_k:=|\{\mathbf 1_{U_e\le p}: e=\{\sigma^b(k), u\}, u\in V^w_n\}|$ as the upper bound. The issue with it is that $\sigma^b(k)$ is a function of the weights of the edges adjacent to $\Sigma(\tau^b_k-1)$, and consequently $\hat\xi_k$ is dependent of $(\hat\xi_i)_{i<k}$. By contrast, our choice of the lower bound in proving Lemma~\ref{lem: lowbd} does not have this problem, as we restrict to the edges between $\sigma^b(k)$ and $\cK^w_k\subseteq V^w_n \setminus \Sigma^w(\tau^b_k)$.

\section{Proof in the sublinear regime } 
\label{sec: sublinear}

We prove Theorem~\ref{thm: sublinear} in this section. In fact, we will prove the following slightly stronger result. Recall from Theorem~\ref{thm: sublinear} that $\gamma_{\theta}=\sqrt{\frac{1-\theta}{\theta}}$.

\begin{The}
\label{thm: sublinear'}
Assume that~\eqref{hyp: theta} holds. 
Let $(\kappa_n)_{n\ge 1}$ be a sequence of positive integers such that $\kappa_n\to\infty$ and $\kappa_n/n\to 0$ as $n\to\infty$. Then we have
\[
\sup_{0\le t\le 1}\bigg|\frac{1}{\kappa_n}\big|\Sigma^b(\lfloor \kappa_n t\rfloor)\big|-\frac{t}{1+\gamma_{\theta}}\bigg|\xrightarrow{n\to\infty}0 \quad\text{in probability.}
\]
\end{The}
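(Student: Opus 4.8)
The plan is to run Algorithm~\ref{algo: 2-explore} at the critical parameter $p=p_n:=1/\sqrt{n_b n_w}$ (i.e.\ $\lambda=1$) and to control the Prim ranks $\tau^b_k$ of the successive black vertices. Write $W_k:=|\Sigma^w(\tau^b_k)|=\tau^b_k-k$ for the number of white vertices of rank at most $\tau^b_k$. The quantity $s\mapsto|\Sigma^b(s)|$ is a non-decreasing step function that jumps by one exactly at $s=\tau^b_k$, so comparing it with the line $s\mapsto s/(1+\gamma_\theta)$ on each interval $[\tau^b_k,\tau^b_{k+1})$ shows, by a short deterministic estimate, that the supremum in the statement is bounded by $(1+\gamma_\theta)^{-1}\kappa_n^{-1}\sup_{k}|\tau^b_k-(1+\gamma_\theta)k|$ plus an $O(\kappa_n^{-1})$ term. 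It therefore suffices to prove
\[
\sup_{\,k:\ \tau^b_k\le \kappa_n}\big|W_k-\gamma_\theta k\big|=o_p(\kappa_n),\qquad n\to\infty.
\]

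First I would show that the discovered white neighbourhoods carry the bulk of $W_k$. Since $\tau^b_k\le\kappa_n=o(n)$ in the relevant range, only $o(n)$ vertices are explored, whence $K^w_k=(1-o(1))n_w$ and $K^b_k=(1-o(1))n_b$ throughout. Proposition~\ref{prop: incre}(i) combined with the i.i.d.\ sandwiching of Lemmas~\ref{lem: upbd} and~\ref{lem: lowbd} bounds each $O^w_j$ above and below by i.i.d.\ binomials with mean $n_wp_n\to\gamma_\theta$ (resp.\ $\underline n_wp_n$), and a Kolmogorov maximal inequality / functional law of large numbers for these i.i.d.\ sums yields
\[
\sup_{1\le k\le \kappa_n}\Big|\sum_{j\le k}O^w_j-\gamma_\theta k\Big|=o_p(\kappa_n).
\]

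Next, Lemma~\ref{lem: Kset}(iii) gives the exact identity $W_k=\sum_{j\le k-1}O^w_j+J^w(k)-\mathrm{overflow}_k$, where $J^w(k)=|\cJ^w_k|$ counts white lead vertices and $\mathrm{overflow}_k$ counts already-discovered white vertices whose Prim rank exceeds $\tau^b_k$ (these necessarily lie in the component currently being explored). The proof then reduces to showing both error terms are $o_p(\kappa_n)$ uniformly. On the black side, Proposition~\ref{prop: incre}(ii), the bound~\eqref{bd: Delta}, and Lemma~\ref{lem: lowbd} show that the exploration queue $A_k=|\cA^b_k|$ behaves like a critical random walk reflected at $0$, with mean increment $\to1$; hence $\sup_{k\le\kappa_n}A_k=O_p(\sqrt{\kappa_n})=o_p(\kappa_n)$ and the number of black-rooted components met is likewise $O_p(\sqrt{\kappa_n})$. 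This controls the non-singleton part of $J^w(k)$ (each such white lead is tied to a distinct black root by Lemma~\ref{lem: Kset}(i)) and, via the white boundary of the current excursion, the overflow term.

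The main obstacle is the remaining part of $J^w(k)$: the white lead vertices that are \emph{isolated} in $G(n_b,n_w,p_n)$. At criticality a positive fraction $e^{-1/\gamma_\theta}$ of all white vertices are isolated, so there are $\Theta(n)$ singleton white components overall, and I must show that only $o_p(\kappa_n)$ of them acquire a Prim rank below $\tau^b_k$. Equivalently, the first $\kappa_n$ Prim vertices must span only $o_p(\kappa_n)$ connected components of $G(n_b,n_w,p_n)$. Heuristically this holds because the inter-component Prim edges make the components be visited in a size-biased order, and at criticality the size-biased component size diverges (of order $n^{1/3}$), so an initial segment of length $\kappa_n$ meets $o(\kappa_n)$ components. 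Turning this into a uniform bound — through a careful analysis of the stochastic processes encoding the exploration and their near-critical two-type branching approximation, in the spirit of~\cite{Mi08} — is the heart of the argument. Once it is in place, both error terms are negligible, $W_k=\gamma_\theta k+o_p(\kappa_n)$ uniformly, and the reduction of the first paragraph finishes the proof.
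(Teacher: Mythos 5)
Most of your outline coincides with the paper's actual proof: the binomial sandwich of Lemmas~\ref{lem: upbd} and~\ref{lem: lowbd} plus Doob's maximal inequality is exactly how Proposition~\ref{prop: LLN-w} establishes $\sup_{k}|S^w_k-\gamma_\theta k|=o_p(\kappa_n)$; your identity $W_k=\sum_{j\le k-1}O^w_j+J^w(k)-\mathrm{overflow}_k$ is Lemma~\ref{lem: Aset} (with $\mathrm{overflow}_k=A^w(k)$ and $R(k)=S^w_{k-1}-A^w(k)$); and your control of the queue and of the overflow via the black side mirrors Lemmas~\ref{lem: Ab},~\ref{lem: AbAw} and~\ref{lem: O-R}. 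The genuine gap is that you leave the decisive step---that only $o_p(\kappa_n)$ white lead vertices, in particular isolated white vertices, occur among the first $\kappa_n$ ranks (Proposition~\ref{prop: bound-compo})---as an admitted heuristic, and the heuristic you offer (size-biased visiting order, components of polynomial size in $n$) is not how the paper argues and is quantitatively off when $\kappa_n\ll n^{2/3}$: the components met by an initial Prim segment of length $\kappa_n$ are typically of size comparable to $\sqrt{\kappa_n}$, not $n^{1/3}$. The paper's mechanism is combinatorial, not asymptotic: the ballot-type identity $I^b(k)-1=-\inf_{j\in[k-1]}(S^b_j-j)$ of Lemma~\ref{lem: Aset} expresses the number of black root vertices as the running infimum of the near-critical walk $S^b_j-j$, which Lemma~\ref{lem: Ib} makes $o_p(\kappa_n)$ by the same maximal-inequality estimates---no branching-process analysis in the spirit of~\cite{Mi08} is needed. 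The idea your proposal lacks entirely is the colour-swap duality: running Prim's algorithm on the same weights with colours reversed yields the \emph{identical} Prim sequence, so white leads of $G(n_b,n_w,p)$ become black leads, hence root vertices, of the dual graph, and applying the $I^b$ bound to the dual (with $\theta$ replaced by $1-\theta$, so $\gamma_{1-\theta}=\gamma_\theta^{-1}$) delivers Proposition~\ref{prop: bound-compo} in a few lines.

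Your opening deterministic reduction also has a hole that the same duality plugs. Bounding $\sup\{|W_k-\gamma_\theta k|:\tau^b_k\le\kappa_n\}$ says nothing about the ranks between $\tau^b_{|\Sigma^b(\kappa_n)|}$ and $\kappa_n$: a terminal run of white vertices of length $c\kappa_n$ (e.g.\ leaves of a star whose black centre has low rank, or a string of isolated whites) is compatible with $\tau^b_k=(1+\gamma_\theta)k$ exactly for all $k$ in your range, yet leaves the theorem's supremum at $t=1$ of constant order. The paper resolves this by distinguishing the colour of $\sigma(\kappa_n)$: exactly one of $\tau^b_{|\Sigma^b(\kappa_n)|}=\kappa_n$ or $\tau^w_{|\Sigma^w(\kappa_n)|}=\kappa_n$ holds, and the estimate~\eqref{eq: Sigmab} for the original graph is combined with its dual counterpart~\eqref{eq: Sigmaw} to conclude in either case; uniformity in $t$ then follows from monotonicity of $t\mapsto|\Sigma^b(\lfloor\kappa_n t\rfloor)|$, as in your plan. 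In short: your scaffolding matches the paper, but the two load-bearing ideas---the root-counting identity and the colour-reversal coupling---are missing, and without them both the component-count step and the endpoint of your reduction remain unproven.
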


We note that Theorem~\ref{thm: sublinear} is an immediate consequence of Theorem~\ref{thm: sublinear'} by taking $t=1$. 

The rest of this section is organised as follows. In Section~\ref{sec: sub-overview} we lay out the main steps in the proof of Theorem~\ref{thm: sublinear'}. We defer some of the more technical proofs to Sections~\ref{sec: boundJ} and~\ref{sec: limit-sub}.

\subsection{An overview of the proof in the sublinear regime}
\label{sec: sub-overview}

We recall the random graph $G(n_b, n_w, p)$ with the edge set $E_n(p)$ introduced in~\eqref{def: Enp}. Throughout Section~\ref{sec: sublinear}, we take $p$ to be critical, i.e. 
\[
p=\frac{1}{\sqrt{n_bn_w}}.
\]
Recall that $\tau^b_k$ is the Prim rank of the $k$th black vertex. 
The proof of Theorem~\ref{thm: sublinear'} consists in estimating $|\Sigma^b(\lfloor \kappa_n t\rfloor)|$, the number of black vertices among the first $\lfloor \kappa_n t\rfloor$ entries in the Prim sequence. 
As a first step towards this end, we consider the set $\Sigma^w(\tau^b_k)$ and provide an approximation of its size using quantities that appear in the exploration of $G(n_b, n_w, p)$. More specifically, 
recall that $\cJ^w_k$ is the set of white lead vertices of ranks up to $\tau^b_k$ and $J^w(k)=|\cJ^w_k|$; 
for $k\in [n_b]$, let
\begin{equation}
\label{id: Sk}
R(k)=|\Sigma^w(\tau^b_k)|-J^w(k).
\end{equation}

\begin{Prop}
    \label{prop: bound-compo}
    Let the assumptions of Theorem~\ref{thm: sublinear'} hold and let $p=\frac{1}{\sqrt{n_bn_w}}$. 
The following convergence holds in probability: 
    \[
\frac{1}{\kappa_n}J^w\big(|\Sigma^b(\kappa_n)|\big) \xrightarrow{n\to\infty} 0. 
    \]
\end{Prop}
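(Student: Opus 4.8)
The plan is to dominate $J^w(|\Sigma^b(\kappa_n)|)$ by the number of connected components of $G(n_b,n_w,p)$ that are met during the first $\kappa_n$ steps of Prim's exploration, and then to exploit criticality to show that only $O_p(\sqrt{\kappa_n})$ such components are needed. First I would set up the combinatorial reduction. Since $\cJ^w_{|\Sigma^b(\kappa_n)|}$ consists of white lead vertices lying in $\Sigma(\tau^b_{|\Sigma^b(\kappa_n)|})$ and $\tau^b_{|\Sigma^b(\kappa_n)|}\le \kappa_n$, every vertex counted by $J^w(|\Sigma^b(\kappa_n)|)$ is the lowest-ranked vertex of a distinct component meeting $\{\sigma(i):i\le\kappa_n\}$. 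Splitting white leads into those with a black neighbour and the isolated ones, Lemma~\ref{lem: Kset}~{\it(i)} shows that each white lead of the first type is immediately followed by a black root, so their number is at most the number of black roots met in the first $\kappa_n$ steps; the isolated white leads are precisely the singleton white components. Hence it suffices to bound, by $o_p(\kappa_n)$, both the number of black roots and the number of isolated white components among the first $\kappa_n$ Prim vertices.

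I would then read off the number of black roots from the frontier of Algorithm~\ref{algo: 2-explore}. Let $a_k=|\cA^b_k|$ be the size of the active black frontier; it satisfies $a_k=a_{k-1}-1+O^b_k$ while $a_{k-1}>0$, and a new black root is opened exactly when $a_{k-1}=0$. The decisive feature is criticality at $p=1/\sqrt{n_bn_w}$: by Proposition~\ref{prop: incre}, $O^b_k\ge\Delta_k$ is a random sum of $O^w_k\sim\cB(K^w_k,p)$ binomial terms, so $\mathbb E[O^b_k\mid\cG^b_k]\approx (n_wp)(n_bp)=\gamma_\theta\cdot\gamma_\theta^{-1}=1$, which matches the Perron root $\lambda=1$ of the mean matrix $M$. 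Using the stochastic sandwich of Lemmas~\ref{lem: upbd} and~\ref{lem: lowbd}, I would trap $(a_k)$ between two genuine random walks whose i.i.d.\ increments have means $1\pm o(1)$, the $o(1)$ coming only from the depletion of the pools $\cK^b_k,\cK^w_k$, which over the first $\kappa_n$ steps is of relative order $O(\kappa_n/n)=o(1)$. The isolated white components are treated identically by the colour-symmetric white $2$-neighbourhood exploration, whose restarts count all white-rooted components (in particular all isolated whites) among the first $|\Sigma^w(\kappa_n)|\le\kappa_n$ white vertices.

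Next I would count restarts. For a walk $W_t=\sum_{k\le t}(O^b_k-1)$ the number of restarts up to time $t$ is the number of new running minima, i.e.\ $-\min_{s\le t}W_s+O(1)$. A critical (mean-$1$), finite-variance walk satisfies $-\min_{s\le t}W_s=O_p(\sqrt t)$, while a residual drift of size $\delta_n=O(\kappa_n/n)$ contributes at most $O(\delta_n t)=O(\kappa_n^2/n)$ additional minima; both terms are $o_p(\kappa_n)$ because $\kappa_n\to\infty$ and $\kappa_n=o(n)$, so even the crude bound suffices and the sharp $\sqrt{\kappa_n}$ is not needed. Applying this with $t=|\Sigma^b(\kappa_n)|\le\kappa_n$, and its white analogue, bounds both the number of black roots and the number of isolated white components by $o_p(\kappa_n)$, which yields the proposition.

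The main obstacle is exactly this restart count at (near-)criticality. Conceptually it reflects the fact that at $\lambda=1$ the component sizes are heavy-tailed --- the critical two-type process of Section~\ref{sec: percolation} has total progeny with a $k^{-3/2}$ tail and hence a diverging mean --- so that a handful of large components swallow almost all of the first $\kappa_n$ explored vertices and only $O_p(\sqrt{\kappa_n})$ distinct components are ever met. (This is also what rules out the naive guess that a positive fraction of the first $\kappa_n$ vertices are isolated: isolated vertices are reached through boundary edges of weight $>p$ and are therefore explored late, a correlation invisible to the marginal law of $\sigma(i)$.) Turning this picture into a rigorous bound demands (i) controlling the two-type/periodic structure so that the embedded black walk is genuinely critical, (ii) keeping the pool-depletion drift $o(1)$ throughout via the binomial sandwich of Lemmas~\ref{lem: upbd} and~\ref{lem: lowbd}, and (iii) a maximal inequality for the running minimum of the resulting walk; the interplay of these three estimates, rather than any single calculation, is where the real work lies.
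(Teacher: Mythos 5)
Your proposal is correct and runs on the same engine as the paper's proof: the identity ``number of restarts equals the running minimum of the exploration walk'' is exactly~\eqref{id: I}, the near-critical walk is controlled by the same binomial sandwich (Lemmas~\ref{lem: upbd} and~\ref{lem: lowbd}) plus a maximal inequality, and the colour-swap duality is the paper's dual coupling. The one place you genuinely deviate is the reduction step. The paper bounds $J^w$ in a single stroke: every white lead of $G$ is a black lead, hence a root vertex, of the colour-reversed graph $\hat G$, so $J^w\le \hat I^b$, at the cost of the index bookkeeping~\eqref{bd: J} relating $\tau^b_{|\Sigma^b(\kappa_n)|}$ to $\tau^w_{|\Sigma^w(\kappa_n)|}$. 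You instead split white leads via Lemma~\ref{lem: Kset}~{\it(i)} into those with a black neighbour (injectively mapped to black roots of $G$ itself, hence bounded by $I^b(|\Sigma^b(\kappa_n)|)$, which is Lemma~\ref{lem: Ib} verbatim) and the isolated whites, for which you invoke the dual exploration anyway. Your split buys a slightly cleaner containment argument (isolated whites among $\Sigma(\kappa_n)$ are trivially white roots among the first $|\Sigma^w(\kappa_n)|\le\kappa_n$ white vertices, sidestepping~\eqref{bd: J}), but since both halves reduce to the same Lemma~\ref{lem: Ib}-type estimate applied once in the primal and once in the dual, it is a reorganisation rather than a shortcut. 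Two glosses in your writing deserve flagging, though neither is fatal given the lemmas you cite. First, the upper walk does not have i.i.d.\ increments: given $(O^w_k)_k$ the variables $\bar\xi^b_k$ are independent compound binomials, so the paper runs a two-stage argument (LLN for $S^w$ in Proposition~\ref{prop: LLN-w}, then a conditional Doob inequality for the martingale $\bar S^b_k-\hat\gamma^{(n)}_\theta S^w_k$); your ``genuine random walks'' phrasing papers over this. Second, keeping the depletion drift $o(1)$ requires the a priori bound $S^b_{|\Sigma^b(\kappa_n)|}=O_p(\kappa_n)$, whose proof in the paper uses the \emph{trivial} bound $J^w\le\kappa_n$ on the excess term $\sum_k(O^b_k-\Delta_k)$ from Proposition~\ref{prop: incre}~{\it(ii)} --- this is what breaks the apparent circularity of needing $J^w$ small to bound the walk that bounds $J^w$; your argument avoids the circularity for the running minimum itself (only the lower sandwich matters there, and the $\cN_0$ excess only pushes the walk up), but the pool-depletion control does need this crude step made explicit.
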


\begin{Prop}
\label{prop: LLN}
Let the assumptions of Theorem~\ref{thm: sublinear'} hold and let $p=\frac{1}{\sqrt{n_bn_w}}$. 
The following convergence holds in probability: 
\[
\frac{1}{\kappa_n}\sup_{j \le|\Sigma^b(\kappa_n)|}\big|R(j) - \gamma_\theta j\big| \xrightarrow{n\to\infty} 0.
\]
\end{Prop}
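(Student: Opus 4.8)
The plan is to reduce the statement to a law of large numbers for the white discoveries $\sum_j O^w_j$ together with a bound on the \emph{frontier} of discovered-but-unreached white vertices. The starting point is the exact identity
\[
R(k) = \sum_{j=1}^{k-1} O^w_j - F_k, \qquad F_k := \Big|\,\bigcup_{j<k}\cO^w_j \setminus \Sigma^w(\tau^b_k)\,\Big|,
\]
valid for every $k\in[n_b]$. Indeed, taking cardinalities in the disjoint decomposition $V^w_n\setminus\cK^w_k = \bigcup_{j<k}\cO^w_j \,\sqcup\, \cJ^w_k$ from Lemma~\ref{lem: Kset}~(iii) gives $n_w - K^w_k = \sum_{j<k}O^w_j + J^w(k)$, while splitting the very same set along $\Sigma^w(\tau^b_k)$ (which contains $\cJ^w_k$, since white leads of rank at most $\tau^b_k$ have been reached) gives $n_w - K^w_k = |\Sigma^w(\tau^b_k)| + F_k$; subtracting $J^w(k)$ and recalling $R(k)=|\Sigma^w(\tau^b_k)|-J^w(k)$ yields the identity. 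Since $F_k\ge 0$, one half of the claim, namely $R(k)\le \gamma_\theta k + o_p(\kappa_n)$ uniformly, will follow at once from the estimate below.

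First I would establish the uniform law of large numbers
\[
\sup_{k\le \kappa_n}\Big|\sum_{j=1}^{k-1}O^w_j - \gamma_\theta k\Big| = o_p(\kappa_n).
\]
By Lemmas~\ref{lem: upbd} and~\ref{lem: lowbd} the increments $O^w_j$ are sandwiched, term by term, between i.i.d.\ variables of laws $\cB(\underline n_w,p)$ and $\cB(n_w,p)$; with $p=1/\sqrt{n_bn_w}$ both means converge to $n_wp\to\gamma_\theta$ and the variances stay bounded. Crucially, on the range $j\le|\Sigma^b(\kappa_n)|$ every Prim rank involved is at most $\kappa_n$, so at most $\kappa_n$ white vertices have been used and $K^w_k\ge n_w-\kappa_n$ holds deterministically; choosing $\underline n_w=n_w-\kappa_n$ activates the lower bound (its indicator equals $1$) while still giving $\underline n_wp\to\gamma_\theta$, because $\kappa_n p\to 0$. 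A Kolmogorov/Doob maximal inequality applied to the centred partial sums of each sandwiching i.i.d.\ sequence controls the supremum over $k\le\kappa_n$ at order $O_p(\sqrt{\kappa_n})=o_p(\kappa_n)$, and the two sums then pinch $\sum_{j<k}O^w_j$ to $\gamma_\theta k + o_p(\kappa_n)$, uniformly.

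The remaining, and main, difficulty is the matching lower bound on $R(k)$, i.e.\ showing $\sup_{k\le |\Sigma^b(\kappa_n)|} F_k = o_p(\kappa_n)$. The set $\bigcup_{j<k}\cO^w_j\setminus\Sigma^w(\tau^b_k)$ consists precisely of the white vertices lying in the connected component currently under exploration whose Prim rank exceeds $\tau^b_k$; vertices of already-completed components, being intervals in Prim's order by Proposition~\ref{prop: prim}, are all reached, so $F_k$ is exactly the white part of the active region of the $2$-neighbourhood exploration. I would encode this exploration as a lattice walk whose increments are, through Proposition~\ref{prop: incre} and the sandwiches of Lemmas~\ref{lem: upbd}--\ref{lem: lowbd}, dominated by i.i.d.\ variables of finite variance and mean matching criticality (each explored black vertex produces on average $n_wp\cdot n_bp\to 1$ further black vertices through its white neighbourhood), so that $F_k$ is comparable to the height of this walk above its running minimum. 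A maximal inequality for such a reflected critical walk then bounds $\sup_{k\le\kappa_n}F_k$ at order $O_p(\sqrt{\kappa_n})=o_p(\kappa_n)$, with the root/lead corrections absorbed by Proposition~\ref{prop: bound-compo}. I expect this step — making the walk encoding precise and transferring the maximal bound through the colour bookkeeping — to be the crux. The final statement then follows by combining the two displays with $J^w(k)=o_p(\kappa_n)$ (Proposition~\ref{prop: bound-compo}) and the trivial domination $\sup_{k\le|\Sigma^b(\kappa_n)|}(\cdot)\le \sup_{k\le\kappa_n}(\cdot)$.
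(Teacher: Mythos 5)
Your skeleton is the paper's: your identity $R(k)=\sum_{j<k}O^w_j-F_k$ is exactly the first identity of Lemma~\ref{lem: Aset} (your $F_k$ is the paper's $A^w(k)$, the white frontier), your first display is Proposition~\ref{prop: LLN-w}, and the missing piece $\sup_{k}F_k=o_p(\kappa_n)$ is Lemma~\ref{lem: O-R}. But two steps of your plan do not go through as written. First, the claim that ``$K^w_k\ge n_w-\kappa_n$ holds deterministically'' on the relevant range is false: by Lemma~\ref{lem: Kset}~\emph{(iii)}, $n_w-K^w_k=S^w_{k-1}+J^w(k)$, and the pool $\cK^w_k$ excludes \emph{all discovered} white vertices $\bigcup_{j<k}\cO^w_j$, not only the $\le\kappa_n$ reached ones; $S^w_{k-1}$ is only $O_p(\kappa_n)$ with limiting constant $\gamma_\theta$, which exceeds $1$ whenever $\theta<1/2$, so with $\underline n_w=n_w-\kappa_n$ the indicator in Lemma~\ref{lem: lowbd}~\emph{(iii)} is not a.s.\ equal to $1$, and making it hold w.h.p.\ with that tight choice would require an a priori bound on $|\Sigma^b(\kappa_n)|$ of the kind the proposition itself delivers --- circular. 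The paper avoids this by first running the upper bound of Lemma~\ref{lem: upbd} to get $n_w-K^w_{|\Sigma^b(\kappa_n)|}=O_p(\kappa_n)$, then taking $\underline n_w=(1-\delta_n)n_w$ with $\delta_n=\sqrt{\kappa_n/n}$, so that $\delta_n n_w\gg\kappa_n$ and the event $Q_n=\{K^w\ge(1-\delta_n)n_w\}$ holds w.h.p.\ while still $\underline n_w p\to\gamma_\theta$ (proof of Proposition~\ref{prop: LLN-w}). This is fixable, but your justification as stated fails.

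The genuine gap is the frontier bound, which you correctly flag as the crux but then dispatch by assertion. The reflected critical walk you describe controls the \emph{black} frontier, not the white one: the exact identities are $A^b(k)=S^b_k-k+I^b(k)$ and $I^b(k)-1=-\inf_{j\le k-1}(S^b_j-j)$ (Lemma~\ref{lem: Aset}), so $A^b(k)$ is the height of $S^b_\cdot-\cdot$ above its running minimum, and $\sup_{k\le|\Sigma^b(\kappa_n)|}A^b(k)=o_p(\kappa_n)$ follows from Proposition~\ref{prop: LLN-b} and Lemma~\ref{lem: Ib} (this is Lemma~\ref{lem: Ab}). There is no analogous deterministic identity expressing $F_k=A^w(k)$ as a reflected walk: $A^w(k)=S^w_{k-1}-R(k)$, and controlling $R$ is precisely what you are trying to prove, so saying $F_k$ ``is comparable to the height of this walk above its running minimum'' begs the question --- a priori the white frontier could be large while the black one is small if the pending whites happened to have few fresh black neighbours. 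The missing idea is a \emph{probabilistic} transfer between the two frontiers, which the paper supplies as Lemma~\ref{lem: AbAw}: $\cA^w_k$ is $\cG^w_k$-measurable, each of its $\ge\vep\kappa_n$ members contributes, conditionally, a nearly independent $\cB(n_b-k,p)$ number of fresh black vertices (mean $\to\gamma_\theta^{-1}$), and a second-moment argument with a careful lower-bound sandwich shows $\mathbb P\big(A^b(k)\ge\vep^2\kappa_n\,\big|\,A^w(k)\ge\vep\kappa_n\big)\to1$ uniformly in $k\le\kappa_n$; Lemma~\ref{lem: O-R} then converts the $A^b$ bound into $\sup_k A^w(k)=o_p(\kappa_n)$. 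Note also that this comparison leans on the measurability framework of Appendix~\ref{sec: app} (fresh edges lie in $F_{\tau^b_k-1}$, Lemma~\ref{lem: Fk}~\emph{(iii)}), so the ``colour bookkeeping'' you defer is where the real work sits; without it, or an equivalent substitute, your argument does not close. (Incidentally, the paper only ever needs $o_p(\kappa_n)$ here, not your claimed $O_p(\sqrt{\kappa_n})$.)
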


Proposition~\ref{prop: bound-compo} is proved in Section~\ref{sec: boundJ} and Proposition~\ref{prop: LLN} in Section~\ref{sec: limit-sub}. Both proofs rely on the 2-neigbhourhood exploration of $G(n_b, n_w, p)$ in Algorithm~\ref{algo: 2-explore}. 

\paragraph{The dual problem.} At various points in our proof, we will employ the following coupling. 
By flipping the vertex colours--black to white and white to black--we obtain a graph $\hat K_{n_b, n_w}$ on the same vertex set $V_n$ and the edge set $E_n$ but with reversed vertex colours. 
In particular, the proportion of black vertices in $\hat K_{n_b, n_w}$ is given by 
\[
\frac{n_w}{n} \xrightarrow{n\to\infty} \hat\theta:=1-\theta.
\]
Run Algorithm~\ref{algo: Prim} on $\hat K_{n_b, n_w}$ with the same edge weights $\{U_{e}: e\in E_n\}$ and in {\bf Step 1} start with the same $\sigma(1)$, which is also uniformly distributed on the vertex set of $\hat K_{n_b, n_w}$. 
Then it is not difficult to see that the algorithm will produce the same Prim sequence $(\sigma(k))_{k\in[n]}$ as well as the same Prim edges $(e_k)_{k\in [n-1]}$. Let $\tau^w_k$ be the rank of the $k$th white vertex in $(\sigma(i))_{i\in [n]}$ for $K_{n_b, n_w}$. 
Define $\hat\Sigma^b(k), \hat\Sigma^w(k)$, $\hat\tau^b_k$, $\hat\tau^w_k$ to be the analogues of $\Sigma^b(k)$,  $\Sigma^w(k)$, $\tau^b_k$, $\tau^w_k$ for $\hat K_{n_b, n_w}$. Note that we have
\begin{equation}
\label{id: col-swap}
\hat\Sigma^w(k) = \Sigma^b(k), \quad \hat\Sigma^b(k)=\Sigma^w(k), \quad \hat\tau^w_k = \tau^b_k. 
\end{equation}
Let $\hat G(n_b, n_w, p)$ be the subgraph of $\hat K_{n_b, n_w}$ with the edge set $E_n(p)$. Then $\hat G(n_b, n_w, p)$ has the same distribution as $G(n_w, n_b, p)$; in particular, all our previous estimates apply to $\hat G(n_b, n_w, p)$. 

\medskip
We are now ready to prove Theorem~\ref{thm: sublinear'}. 
\begin{proof}[Proof of Theorem~\ref{thm: sublinear'}]
Let us first show that
\begin{equation}
\label{cv: Sigma}
\rho^{(n)}_{\kappa_n}=\frac{|\Sigma^b(\kappa_n)|}{\kappa_n}\xrightarrow{n\to\infty} \frac{1}{1+\gamma_{\theta}} \quad \text{in probability.}
\end{equation}
To ease the notation, let us denote $\eta_n=|\Sigma^b(\kappa_n)|$, which is bounded by both $ \kappa_n$ and $n_b$. 
Applying in tandem~\eqref{id: Sk}, Proposition~\ref{prop: bound-compo} and Proposition~\ref{prop: LLN}, we find that
\[
\big|\Sigma^w\big(\tau_{\eta_n}^b\big)\big| = R(\eta_n) + J^w(\eta_n) = R(\eta_n)+ o_p(\kappa_n) = \gamma_\theta \eta_n  + o_p({\kappa_n}), \quad n\to\infty.
\]
Since by definition,  $|\Sigma^b(\tau_j^b)| = j$ for each $j\le n_b$, we can re-write the previous identity as follows:
\begin{equation}
\label{eq: Sigmab}
\big|\Sigma^w\big(\tau^b_{\eta_n}\big)\big| = \gamma_\theta \big|\Sigma^b\big(\tau_{\eta_n}^b\big)\big| + o_p({\kappa_n}), \quad n\to\infty.
\end{equation}
Recall $\hat G(n_b, n_w, p)$ from above, where the proportion of black vertices converges to $1-\theta$. As noted previously, all our previous estimates also apply to $\hat G(n_b, n_w, p)$, so that we have the following analogue of~\eqref{eq: Sigmab}:
 \[
\big|\hat\Sigma^w\big(\hat\tau^b_{|\hat\Sigma^b(\kappa_n)|}\big)\big| = \gamma_{1-\theta} \big|\hat\Sigma^b\big(\hat\tau_{|\hat\Sigma^b(\kappa_n)|}^b\big)\big| + o_p({\kappa_n}), \quad n\to\infty.
 \]
By the identity~\eqref{id: col-swap} and the fact that $\gamma_{1-\theta}=\gamma_{\theta}^{-1}$, we find that
\begin{equation}
\label{eq: Sigmaw}
\big|\Sigma^b\big(\tau_{|\Sigma^w(\kappa_n)|}^w\big)\big| = \gamma_\theta^{-1} \big|\Sigma^w\big(\tau_{|\Sigma^w(\kappa_n)|}^w)\big| + o_p({\kappa_n}), \quad n\to\infty. 
\end{equation}
We note that if $\sigma(\kappa_n)$ is black, then $\tau_{|\Sigma^b(\kappa_n)|}^b=\kappa_n$; otherwise $\tau_{|\Sigma^w(\kappa_n)|}^w=\kappa_n$. 
Hence,~\eqref{eq: Sigmab} and~\eqref{eq: Sigmaw} together show that
\[
\big|\Sigma^b(\kappa_n)\big| = \gamma_\theta^{-1} \big|\Sigma^w(\kappa_n)\big| + o_p(\kappa_n)=\gamma_\theta^{-1} \big(\kappa_n-\big|\Sigma^b(\kappa_n)\big|\big) + o_p(\kappa_n), \quad n\to\infty,
\]
since $\big|\Sigma^b(\kappa_n)\big| + \big|\Sigma^w(\kappa_n)\big| = \kappa_n$. Re-arranging the terms and dividing both sides by $\kappa_n$ yields~\eqref{cv: Sigma}. Now let $t\in (0, 1]$. Since the sequence $(\lfloor\kappa_nt\rfloor)_{n\in \N}$ also verifies the assumption of Theorem~\ref{thm: sublinear'},~\eqref{cv: Sigma} applies to yield that
\[
\frac{1}{\kappa_n}\big|\Sigma^b(\lfloor\kappa_nt\rfloor)\big|\xrightarrow{n\to\infty}\frac{t}{1+\gamma_\theta}\quad\text{in probability.} 
\]
We also point out that the previous convergence also holds trivially for $t=0$. Finally, the monotonicity of $|\Sigma^b(\lfloor \kappa_n t\rfloor)|$ in $t$ implies that for any $t\in [t_1, t_2]\subseteq [0, 1]$, 
\[
\frac{1}{\kappa_n}\big|\Sigma^b(\lfloor\kappa_nt_1\rfloor)\big|-\frac{t_2}{1+\gamma_{\theta}}\le \frac{1}{\kappa_n}\big|\Sigma^b(\lfloor\kappa_nt\rfloor)\big|-\frac{t}{1+\gamma_{\theta}}\le \frac{1}{\kappa_n}\big|\Sigma^b(\lfloor\kappa_nt_2\rfloor)\big|-\frac{t_1}{1+\gamma_{\theta}}.
\]
Hence, by dividing $[0, 1]$ into sufficiently small sub-intervals, we can deduce the claimed uniform convergence from the previous pointwise convergence. 
\end{proof}

\subsection{Proof of Proposition~\ref{prop: bound-compo}}
\label{sec: boundJ}

Recall that  $\cJ^w_k$ and $\cI^b_k$ stand respectively for the set of the white lead vertices and root vertices from $\Sigma(\tau^b_k)$, and $J^w(k)=|\cJ^w_k|$, $I^b(k)=|\cI^b_k|$. We prove Proposition~\ref{prop: bound-compo} here, by first deriving a bound for $I^b(k)$ and then deducing a similar bound for $J^w(k)$ using the duality argument. Throughout this subsection, we work under the assumptions of Theorem~\ref{thm: sublinear'} and $p=\frac{1}{\sqrt{n_bn_w}}$. 

Recall $\cO^b_k$ from~\eqref{def: Obk} and $\cO^w_k$ from~\eqref{def: Ow}, and note that $O^w_k=|\cO^w_k|$, $O^b_k=|\cO^b_k|$. We set $S^w_0=S^b_0=0$ and for $k\in [n_b]$, 
\begin{equation}
\label{def: Swalk}
S^w_k=\sum_{j\in [k]}O^w_j, \qquad S^b_k = \sum_{j\in [k]}O^b_j,
\end{equation}
which corresponds to the respective sizes of the disjoint unions $\cup_{j\le k}\cO^w_j$ and $\cup_{j\le k}\cO^b_j$. On the other hand, recall from~\eqref{eq: Nset} that
\begin{equation}
\label{def: Ab}
\cA^b_k=\bigcup_{j\in[k]}\cO^b_j\setminus\Sigma^b(\tau^b_k)
=\Big\{\sigma^b(i): i>k, \sigma^b(i)\in\bigcup_{j\in[k]}\cO^b_j\Big\}. 
\end{equation}
Let us also denote
\begin{equation}
\label{def: Aw}
\cA^w_k =\Big\{\sigma(i): i>\tau^b_k, \,\sigma(i)\in \bigcup_{j\in [k-1]}\cO^w_j\Big\},
\end{equation}
and $A^w(k)=|\cA^w_k|$, $A^b(k)=|\cA^b_k|$. Recall $R(k)$ from~\eqref{id: Sk}. We first establish some combinatorial properties of the graph exploration process. 

\begin{Lem}
\label{lem: Aset}
For all $k\in [n_b]$, we have 
\begin{equation}
\label{id: Aset}
A^w(k)= S^w_{k-1}-R(k)\quad \text{and}\quad A^b(k) = S^b_k-k+I^b(k); 
\end{equation}
moreover, 
\begin{equation}
\label{id: I}
I^b(k)-1= -\inf_{j\in [k-1]}(S^b_j-j). 
\end{equation}
\end{Lem}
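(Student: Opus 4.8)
The plan is to read off both identities in~\eqref{id: Aset} by counting inside the disjoint unions $\bigcup_{j\le k-1}\cO^w_j$ and $\bigcup_{j\le k}\cO^b_j$, and then to derive~\eqref{id: I} from the queue-type recursion satisfied by $A^b(\cdot)$.

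For the first identity I would use that the $\cO^w_j$ are pairwise disjoint, so $\big|\bigcup_{j\in[k-1]}\cO^w_j\big|=S^w_{k-1}$. Every member of this union is a white vertex, hence it either has Prim rank $>\tau^b_k$—in which case it belongs to $\cA^w_k$ by~\eqref{def: Aw}—or it has rank $\le\tau^b_k$ and so lies in $\Sigma^w(\tau^b_k)$; this splits the union as $\cA^w_k\,\sqcup\,\big(\bigcup_{j\in[k-1]}\cO^w_j\cap\Sigma^w(\tau^b_k)\big)$. To size the second piece I would invoke Lemma~\ref{lem: Kset}: the first identity in~\eqref{id: Kset} gives $V^w_n\setminus\cK^w_k=\bigcup_{j\in[k-1]}\cO^w_j\sqcup\cJ^w_k$, while part~{\it(ii)} together with $\cJ^w_k\subseteq\cJ^w_{n_b}$ shows $\cJ^w_k$ is disjoint from each $\cO^w_j$. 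Intersecting this decomposition with $\Sigma^w(\tau^b_k)$ (and noting $\cJ^w_k\subseteq\Sigma^w(\tau^b_k)\subseteq V^w_n\setminus\cK^w_k$) yields $\big|\bigcup_{j\in[k-1]}\cO^w_j\cap\Sigma^w(\tau^b_k)\big|=|\Sigma^w(\tau^b_k)|-J^w(k)=R(k)$, whence $A^w(k)=S^w_{k-1}-R(k)$. The black identity follows the same pattern, using disjointness of the $\cO^b_j$ (so $\big|\bigcup_{j\in[k]}\cO^b_j\big|=S^b_k$), the second identity in~\eqref{id: Kset}, the disjointness $\cI^b_k\cap\cO^b_j=\varnothing$ from part~{\it(ii)}, and $\cO^b_k\cap\Sigma^b(\tau^b_k)=\varnothing$ (because $\cO^b_k\subseteq\cK^b_k$): this identifies the part of $\bigcup_{j\in[k]}\cO^b_j$ lying in $\Sigma^b(\tau^b_k)$ as $\Sigma^b(\tau^b_k)\setminus\cI^b_k$, of cardinality $k-I^b(k)$, giving $A^b(k)=S^b_k-(k-I^b(k))$.

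For~\eqref{id: I} the key is to convert the second identity of~\eqref{id: Aset} into a recursion. Writing $W_j=S^b_j-j$ (so $W_0=0$ and $A^b(j)=W_j+I^b(j)\ge0$), I would first record the dichotomy $I^b(k)-I^b(k-1)=\mathbf 1_{\{A^b(k-1)=0\}}$: by Algorithm~\ref{algo: 2-explore} a new component is opened exactly when $\cA^b_{k-1}=\varnothing$, and any vertex drawn from $\cA^b_{k-1}$ lies in $\bigcup_{j\le k-1}\cO^b_j$ and hence in a component already touched, so $\sigma^b(k)$ is a root iff $A^b(k-1)=0$. I would then prove $I^b(k)=1-\min_{0\le j\le k-1}W_j$ by induction on $k$. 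The case $k=1$ is immediate, since $I^b(1)=1$ and $W_0=0$. For the step, set $m_{k-1}=\min_{0\le j\le k-1}W_j$; combining $A^b(k)=W_k+I^b(k)\ge0$ with the inductive hypothesis $I^b(k)=1-m_{k-1}$ gives $W_k\ge m_{k-1}-1$, with $A^b(k)=0$ iff $W_k=m_{k-1}-1$. If $W_k\ge m_{k-1}$ then $m_k=m_{k-1}$ and $I^b(k+1)=I^b(k)$; if $W_k=m_{k-1}-1$ then $m_k=m_{k-1}-1$ and $I^b(k+1)=I^b(k)+1$; in both cases $I^b(k+1)=1-m_k$, closing the induction. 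Since $W_0=S^b_0-0=0$, this is~\eqref{id: I}, with the infimum understood to range over $\{0,1,\dots,k-1\}$.

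The bookkeeping of the first two paragraphs is routine once Lemma~\ref{lem: Kset} is in hand; the step requiring the most care is the passage to~\eqref{id: I}. The two delicate points are (a) justifying the root-versus-active-set dichotomy $I^b(k)-I^b(k-1)=\mathbf 1_{\{A^b(k-1)=0\}}$ directly from the exploration rule and Proposition~\ref{prop: prim-explore}, and (b) retaining the boundary term $W_0=0$ in the running minimum, which is essential: without it the identity fails whenever $W_j>0$ throughout $[k-1]$ (for instance when the first explored component is large, forcing $I^b=1$ while $\min_{1\le j\le k-1}W_j>0$). Conceptually, $I^b$ records a new minimum of the walk $W$ at each restart, with the initial root accounting for the constant $1$.
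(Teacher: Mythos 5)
Your proof is correct and takes essentially the same route as the paper's: the two identities in~\eqref{id: Aset} are obtained by the same counting through the disjoint decompositions of Lemma~\ref{lem: Kset} together with $\cO^b_k\subseteq\cK^b_k$, and~\eqref{id: I} rests on the same key dichotomy $\sigma^b(k)\in\cI^b_k\Leftrightarrow A^b(k-1)=0$ (the paper's~\eqref{id-pf: I2}), which you then exploit via a direct induction on the running minimum of $W_j=S^b_j-j$ where the paper instead matches the jump times of $M(k)=-\inf_{j\le k}(S^b_j-j)$ with the zeros of $A^b$ --- a cosmetic difference. Your point about retaining the boundary term $W_0=0$ is well taken: the paper's proof implicitly includes $j=0$ (it sets $S^b_0=0$ and uses $M(j)=0$ before the first zero of $A^b$), so your reading of the infimum in~\eqref{id: I} as ranging over $\{0,1,\dots,k-1\}$ is exactly the intended one, and the literal range $[k-1]$ would indeed fail when the walk stays strictly positive.
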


\begin{proof}
For the first identity in~\eqref{id: Aset}, we note that by definition, $\Sigma^w(\tau^b_k)\subseteq V^w_n\setminus \cK^w_k$. Using the decomposition in~\eqref{id: Kset}, we find that 
\[
\Sigma^w(\tau^b_k)\setminus \cJ^w_k \subseteq \bigcup_{j\in [k-1]}\cO^w_j.
\]
Since $\cJ^w_k$ is disjoint from any $\cO^w_j$, 
the difference of the two sets is given by
\[
\bigcup_{j\in [k-1]}\cO^w_j\setminus\Big(\Sigma^w(\tau^b_k)\setminus \cJ^w_k\Big) = \bigcup_{j\in [k-1]}\cO^w_j\setminus \Sigma^w(\tau^b_k)=\cA^w_k.
\]
On the one hand, $R(k)=|\Sigma^w(\tau^b_k)|-J^w(k)=|\Sigma^w(\tau^b_k)\setminus\cJ^w_k|$; on the other hand, $S^w_{k-1}$ is the size of the disjoint union $\cO^w_j, j\le k-1$. Combining this with the previous argument, we have shown $S^w_{k-1}-R(k)=A^w(k)$ for each $k\in [n_b]$. 

Using the second decomposition in~\eqref{id: Kset} and the fact that $\Sigma^b(\tau^b_k)\subseteq V^b_n\setminus \cK^b_k$, we find that 
\[
\Sigma^b(\tau^b_k)\setminus\cI^b_k\subseteq \bigcup_{j\le k-1}\cO^b_j\subseteq \bigcup_{j\le k}\cO^b_j. 
\]
It follows that
\[
A^b(k)=\Big|\cup_{j\le k}\cO^b_j\Big|- \Big|\Sigma^b(\tau^b_k)\setminus\cI^b_k\Big| = S^b_k - (k-I^b(k)),
\]
since $|\Sigma^b(\tau^b_k)|=k$. This completes the proof of~\eqref{id: Aset}.

To prove~\eqref{id: I}, let us first show that for $1\le k\le n_b$, 
\begin{equation}
\label{id-pf: I2}
\sigma^b(k)\in \cI^b_k \quad \Longleftrightarrow\quad  \cA^b_{k-1}=\varnothing \quad \Longleftrightarrow\quad A^b(k-1)=0,
\end{equation}
with the convention $\cA^b_0=\varnothing$. 
Indeed, if $\cA^b_{k-1}\ne\varnothing$, then  $\sigma^b(k)\in \cA^b_{k-1}=\cN^2(\Sigma^b(\tau^b_{k-1}))$. It follows that we can find some $j\le k-1$ so that $\sigma^b(k)\in \cN^2(\sigma^b(j))$. As a result, $\sigma^b(k)$ can not be the lowest ranking black vertex in its component. 
If, on the other hand, $\cA^b_{k-1}=\cN^2(\Sigma^b(\tau^b_{k-1}))=\varnothing$, then $\sigma^b(k)$ is disconnected from any member of $\Sigma^b(\tau^b_{k-1})$, and consequently $\sigma^b(k)$ is a root vertex. This proves~\eqref{id-pf: I2}. 

Denote $M(k)=-\inf_{j\le k}(S^b_j-j)$, and note that $M(k)$ is non-decreasing and has maximal jump size of 1. Write $B_k=\{ j\in [k-1]: M(j)=M(j-1)+1\}$; we have $M(k-1)=|B_k|$. On the other hand,~\eqref{id-pf: I2} implies that $I^b(k)=|B'_k|$, where $B'_k:=\{0\le j\le k-1: A^b(j)=0\}$. Noting that $0\in B'_k$, the conclusion will follow once we show that $B_k=B'_k\setminus\{0\}$. 

To that end, we assume that the elements of $B'_k\setminus\{0\}$ are $j_1<j_2<\cdots<j_{\ell}$. Then for all $1\le j<j_1$, we have $I^b(j)=1$ and $A^b(j)\ge 1$. We then deduce from the second identity in~\eqref{id: Aset} that $S^b(j)-j=A^b(j)-I^b(j)\ge 0$, so that $M(j)=0$ for all $j<j_1$. We also find $S(j_1)-j_1=A^b(j_1)-I^b(j_1)=-1$. This shows $j_1$ is the smallest element of $B_k$. Similarly, for $j_1<j<j_2$, we have $I^b(j)=2$ and $S^b(j)-j\ge -1$, as well as $S^b(j_2)=-2$. Iterating this procedure, we see that the elements of $B_k$ are given by $j_1, j_2, \dots, j_{\ell}$, which completes the proof of~\eqref{id: I} by the previous arguments.  
\end{proof}

\begin{Prop}[Law of large numbers for $S^w$]
\label{prop: LLN-w}
For all $\vep>0$, the following convergence holds:
\begin{equation}
\label{cv: Swupp}
\mathbb P\Big(\sup_{k\le |\Sigma^b(\kappa_n)|}\Big|S^w_k-\gamma_{\theta}k\Big| \ge \vep \kappa_n\Big) \xrightarrow{n\to\infty} 0.
\end{equation}
\end{Prop}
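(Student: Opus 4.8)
The plan is to sandwich the partial sums $S^w_k=\sum_{j\le k}O^w_j$ between sums of i.i.d.~random variables and then apply a maximal inequality. Since $p=1/\sqrt{n_bn_w}$, a $\cB(n_w,p)$ variable has mean $n_wp=\sqrt{n_w/n_b}\to\gamma_\theta$ under~\eqref{hyp: theta}, which is exactly the target slope. The workhorse is the following observation: for i.i.d.~random variables $\zeta_j$ of bounded variance, the centred walk is a martingale, so by Kolmogorov's (equivalently Doob's $L^2$) maximal inequality
\[
\mathbb P\Big(\sup_{k\le \kappa_n}\Big|\sum_{j\le k}\big(\zeta_j-\mathbb E\zeta_j\big)\Big|\ge \vep\kappa_n\Big)\le \frac{\kappa_n\Var(\zeta_1)}{\vep^2\kappa_n^2}=\frac{\Var(\zeta_1)}{\vep^2\kappa_n}\xrightarrow{n\to\infty}0 .
\]

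For the upper bound, Lemma~\ref{lem: upbd} furnishes i.i.d.~$\bar\xi^w_j\sim\cB(n_w,p)$ with $O^w_j\le\bar\xi^w_j$, so $S^w_k\le\sum_{j\le k}\bar\xi^w_j$ for every $k$. Centring at $n_wp$ and noting that $|n_wp-\gamma_\theta|\,\kappa_n=o(\kappa_n)$ and $\Var(\bar\xi^w_1)=n_wp(1-p)\le\gamma_\theta+o(1)$ is bounded, the displayed inequality gives that, with high probability, $S^w_k-\gamma_\theta k\le\vep\kappa_n$ uniformly over $k\le\kappa_n$.

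The lower bound is more delicate because Lemma~\ref{lem: lowbd} only yields $O^w_k\ge\underline\xi^w_k\mathbf 1_{\{K^w_k\ge\underline n_w\}}$, so I must first guarantee that the white pool $K^w_k$ stays large throughout the range. By Lemma~\ref{lem: Kset}~{\it(iii)} we have $n_w-K^w_k=S^w_{k-1}+J^w(k)$. For $k\le|\Sigma^b(\kappa_n)|$ the $k$-th black vertex has Prim rank $\tau^b_k\le\kappa_n$, whence $J^w(k)\le|\Sigma^w(\tau^b_k)|=\tau^b_k-k\le\kappa_n$; combining this with the upper bound $S^w_{k-1}\le(\gamma_\theta+1)\kappa_n$ just obtained shows $n_w-K^w_k\le(\gamma_\theta+2)\kappa_n=o(n)$ uniformly over $k\le|\Sigma^b(\kappa_n)|$, with high probability. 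I therefore apply Lemma~\ref{lem: lowbd} with $\underline n_w=n_w-\lceil(\gamma_\theta+2)\kappa_n\rceil$; on the event above every indicator equals $1$, so $S^w_k\ge\sum_{j\le k}\underline\xi^w_j$ for $k\le|\Sigma^b(\kappa_n)|$. Because $\kappa_np=\kappa_n/\sqrt{n_bn_w}\to0$, one checks $\underline n_wp\to\gamma_\theta$, and the same maximal inequality applied to the bounded-variance i.i.d.~sequence $(\underline\xi^w_j)$ gives $S^w_k-\gamma_\theta k\ge-\vep\kappa_n$ uniformly over $k\le|\Sigma^b(\kappa_n)|$, with high probability. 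Intersecting the upper- and lower-bound events yields~\eqref{cv: Swupp}.

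I expect the main obstacle to be the indicator $\mathbf 1_{\{K^w_k\ge\underline n_w\}}$ in the lower bound: it is genuine and cannot simply be discarded, so the argument is circular in spirit, feeding the already-proved upper bound on $S^w$ together with the a priori estimate $J^w(k)\le\kappa_n$ back into the control of $K^w_k$. It is precisely the sublinear assumption $\kappa_n=o(n)$ that keeps $n_w-K^w_k$ negligible and the dominating mean $\underline n_wp$ pinned at $\gamma_\theta$.
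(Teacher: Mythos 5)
Your proof is correct and takes essentially the same route as the paper: the upper bound comes from the i.i.d.\ $\cB(n_w,p)$ domination of Lemma~\ref{lem: upbd} plus Doob's maximal inequality, and the lower bound bootstraps that upper bound together with $J^w(k)\le\kappa_n$ and the decomposition of Lemma~\ref{lem: Kset} to show $n_w-K^w_k=O_p(\kappa_n)=o_p(n_w)$ uniformly, which activates the indicator in Lemma~\ref{lem: lowbd} (using that $K^w_k$ is non-increasing) before a second application of the maximal inequality. The only cosmetic difference is your choice $\underline n_w=n_w-\lceil(\gamma_{\theta}+2)\kappa_n\rceil$ in place of the paper's $\underline n_w=(1-\delta_n)n_w$ with $\delta_n=\sqrt{\kappa_n/n}$; both make $\underline n_w p\to\gamma_{\theta}$ and yield the same conclusion.
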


\begin{proof}
Thanks to Lemma~\ref{lem: upbd}, we have 
\[
\forall\,k\in [n_b]: \quad S^w_k\le \bar S^w_k:=\sum_{i\le k}\bar\xi^w_i.
\]
Denote $\gamma^{(n)}_{\theta}=\sqrt{n_w/n_b}$  and recall $p=1/\sqrt{n_bn_w}$. We note that Lemma~\ref{lem: upbd} also says that $(\bar\xi^w_i)_{i\in [n_b]}$ is i.i.d.~and satisfies 
\[
\mathbb E\big[\bar \xi^w_1\big]=n_wp=\gamma^{(n)}_{\theta}, \quad \Var\big(\bar \xi^w_1\big) = n_w p (1-p)\le  \gamma^{(n)}_{\theta}. 
\]
By independence, we then have $\Var(\bar S^w_k)\le k\gamma^{(n)}_{\theta}$. Doob's maximal inequality then asserts that for any $\vep>0$, 
\[
\mathbb P\Big(\sup_{k\le \kappa_n}|\bar S^w_k- \gamma^{(n)}_{\theta}k|\ge \vep\kappa_n \Big)\le \frac{4 \Var(\bar S^w_{\kappa_n})}{\vep^2\kappa_n^2}\le \frac{4\gamma^{(n)}_{\theta}}{\vep^2\kappa_n}\xrightarrow{n\to\infty} 0,
\]
since $\gamma^{(n)}_{\theta}\to \gamma_{\theta}$ under the assumption~\eqref{hyp: theta}. 
It follows that for sufficiently large $n$, 
\[
\mathbb P\Big(\sup_{k\le \kappa_n}\big|\bar S^w_k-\gamma_{\theta}k\big|\ge \vep\kappa_n\Big) \le \mathbb P\Big(\sup_{k\le \kappa_n}|\bar S^w_k- \gamma^{(n)}_{\theta}k|\ge \tfrac12\vep\kappa_n \Big) \xrightarrow{n\to\infty}0. 
\]
Since $S^w_k\le \bar S^w_k$, we deduce that
\begin{equation}
\label{bd: Sw1}
\mathbb P\Big(\sup_{k\le \kappa_n}\big(S^w_k-\gamma_{\theta}k\big)\ge \vep\kappa_n\Big)\xrightarrow{n\to\infty}0. 
\end{equation}
We will shorthand $\eta_n=|\Sigma^b(\kappa_n)|$. 
To derive a matching lower bound, we first note that thanks to Lemma~\ref{lem: Kset} {\it (ii)} and {\it (iii)}, we have 
\begin{equation}
\label{id: ksize}
n_w-K^w_{\eta_n}\le  S^w_{\eta_n} + J^w(\eta_n). 
\end{equation}
Note further that $\tau^b_{\eta_n}=\tau^b_{|\Sigma^b(\kappa_n)|}\le \kappa_n$, so that $J^w(\eta_n)\le |\Sigma(\kappa_n)|=\kappa_n$. 
Together with~\eqref{bd: Sw1} and~\eqref{id: ksize}, and the fact that $\eta_n \leq \kappa_n$ this implies $n_w-K^w_{\eta_n}=O_p(\kappa_n) = o_p(n_w)$ as $n\to\infty$. Let us take $\delta_n=\sqrt{\kappa_n/n}$, so that $\delta_n\to 0$ and 
\begin{equation}
\label{bd: gevent}
\liminf_{n\to\infty}\mathbb P\big(K^w_{\eta_n}\ge (1-\delta_n)n_w\big) = 1. 
\end{equation}
Let $Q_n$ be the event that $K^w_{\eta_n}\ge (1-\delta_n)n_w$. Applying Lemma~\ref{lem: lowbd} with $\underline n_w=(1-\delta_n)n_w$, we can find a sequence of i.i.d.~random variables $(\underline \xi^w_k)_{k\in [n_b]}$ with common distribution $\cB((1-\delta_n)n_w, p)$ such that
\begin{equation}
\label{bd: Sw2}
\forall\,k\in [n_b]:\quad S^w_k \ge \underline S^w_k\mathbf 1_{Q_n}, \text{ with } \underline S^w_k:=\sum_{1\le i\le k}\underline \xi^w_i.
\end{equation}
Straightforward computations show that 
\[
\mathbb E[\underline\xi^w_1]= (1-\delta_n)n_wp=(1-\delta_n)\gamma^{(n)}_{\theta}, \quad \Var(\underline \xi^w_1) \le \gamma^{(n)}_{\theta}.
\]
Combined with Doob's maximal inequality and the fact that $\gamma^{(n)}_{\theta}\to \gamma_{\theta}$ under~\eqref{hyp: theta}, this implies that for any $\vep>0$, 
\[
\mathbb P\Big(\sup_{j\le \kappa_n}|\underline S^w_j-\gamma_{\theta}j|\ge \vep \kappa_n\Big) \le \mathbb P\Big(\sup_{j\le \kappa_n}|\underline S^w_j-(1-\delta_n)\gamma^{(n)}_{\theta}j|\ge \frac{\varepsilon \kappa_n}{2}\Big) +\mathbf 1_{\{|(1-\delta_n)\gamma^{(n)}_{\theta}-\gamma_{\theta}|\ge \varepsilon/2\}}\to 0
\]
as $n\to\infty$. Together with~\eqref{bd: gevent} ,~\eqref{bd: Sw2}, and the fact that $\eta_n \leq \kappa_n$ this yields 
\begin{align*}
 \mathbb P\Big(\inf_{j\le \eta_n}\Big(S^w_j-\gamma_{\theta}j\Big)\le -\vep\kappa_n\Big) &\le \mathbb P\Big(\inf_{j\le \eta_n}\Big(S^w_j-\gamma_{\theta}j\Big)\le -\vep\kappa_n; Q_n\Big)+1-\mathbb P(Q_n) \\
& \le \mathbb P\Big(\inf_{j \le \kappa_n}\Big(\underline S^w_j-\gamma_{\theta}j\Big)\le -\vep\eta_n\Big) + 1-\mathbb P(Q_n) 
\end{align*}
tending to 0 as $n\to\infty$. Combined with~\eqref{bd: Sw1}, and the fact that $\eta_n \leq \kappa_n$ this completes the proof.
\end{proof}

We next derive an upper bound for $I^b(|\Sigma^b(\kappa_n)|)$.

\begin{Lem}
\label{lem: Ib}
Let $\delta_n=\sqrt{\kappa_n/n}$. We have 
\begin{equation}
\label{bd: theta}
\liminf_{n\to\infty}\mathbb P\Big(K^b_{|\Sigma^b(\kappa_n)|+1}\ge (1-\delta_n)n_b\Big) =1.
\end{equation}
Moreover, for any $\vep>0$, the following convergence holds: 
\begin{equation}
\label{bd: Ika}
\limsup_{n\to\infty}\mathbb P\big(I^b(|\Sigma^b(\kappa_n)|)\ge \vep \kappa_n\big) = 0. 
\end{equation}
\end{Lem}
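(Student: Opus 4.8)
The plan is to prove the two assertions in turn, obtaining~\eqref{bd: theta} from crude size bounds and then bootstrapping to~\eqref{bd: Ika} with the law of large numbers of Proposition~\ref{prop: LLN-w}. Throughout I write $\eta_n=|\Sigma^b(\kappa_n)|\le\kappa_n$. The starting point is the disjoint decomposition in Lemma~\ref{lem: Kset}~{\it (iii)} (together with {\it (ii)}, which makes $\cI^b_k$ disjoint from the $\cO^b_j$), which gives the identity
\[
K^b_k=n_b-S^b_{k-1}-I^b(k),\qquad k\in[n_b].
\]
Since $S^b_{k-1}$ and $I^b(k)$ are both non-decreasing in $k$, the pool size $K^b_k$ is non-increasing. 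For~\eqref{bd: theta} it then suffices to show $S^b_{\eta_n}+I^b(\eta_n+1)=o_p(\delta_n n_b)$. The term $I^b(\eta_n+1)$ is trivially bounded by $\eta_n+1\le\kappa_n+1$ because $\cI^b_{\eta_n+1}\subseteq\Sigma^b(\tau^b_{\eta_n+1})$. For $S^b_{\eta_n}$ I would split $O^b_k=\Delta_k+(O^b_k-\Delta_k)$ and invoke Lemma~\ref{lem: upbd}: using $\eta_n\le\kappa_n$ and $J^w(\eta_n)\le\tau^b_{\eta_n}\le\kappa_n$,
\[
S^b_{\eta_n}\le\sum_{k\le\kappa_n}\bar\xi^b_k+\sum_{i\le\kappa_n}\chi_i.
\]
Each summand has expectation of order $\kappa_n$ (using $n_b p=1/\gamma^{(n)}_{\theta}$ and $\mathbb E[S^w_{\kappa_n}]\le\kappa_n\gamma^{(n)}_{\theta}$ from the bound $S^w\le\bar S^w$), so $S^b_{\eta_n}=O_p(\kappa_n)$. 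As $\delta_n n_b\sim\theta\sqrt{\kappa_n n}$ while $\kappa_n=o(\sqrt{\kappa_n n})$, we get $S^b_{\eta_n}+I^b(\eta_n+1)=O_p(\kappa_n)=o_p(\delta_n n_b)$, which is~\eqref{bd: theta}.

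For~\eqref{bd: Ika}, the identity~\eqref{id: I} gives $I^b(\eta_n)=1-\inf_{j\in[\eta_n-1]}(S^b_j-j)$, so the task reduces to the uniform lower bound $S^b_j\ge j-o_p(\kappa_n)$ over $j\le\eta_n$. Let $Q_n=\{K^b_{\eta_n+1}\ge(1-\delta_n)n_b\}$, which has probability tending to $1$ by~\eqref{bd: theta}. By the monotonicity of $K^b$, on $Q_n$ we have $K^b_{k+1}\ge(1-\delta_n)n_b$ for all $k\le\eta_n$. Applying Lemma~\ref{lem: lowbd} with $\underline n_b=(1-\delta_n)n_b$ produces random variables $\underline\xi^b_k$, independent given $(O^w_k)$ with $\underline\xi^b_k\sim\cB(O^w_k\underline n_b,p)$, such that $S^b_j\ge\underline S^b_j:=\sum_{k\le j}\underline\xi^b_k$ for every $j\le\eta_n$ on $Q_n$.

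I would then write $\underline S^b_j=M_j+(\underline n_b p)S^w_j$, where, conditionally on $(O^w_k)_{k\in[n_b]}$, the process $M_j$ is a martingale with conditional quadratic variation at most $(\underline n_b p)S^w_{\kappa_n}\le S^w_{\kappa_n}/\gamma^{(n)}_{\theta}$. Doob's maximal inequality together with $\mathbb E[S^w_{\kappa_n}]\le\kappa_n\gamma^{(n)}_{\theta}$ then gives $\mathbb P(\sup_{j\le\kappa_n}|M_j|\ge\vep\kappa_n)\le(\vep^2\kappa_n)^{-1}\to0$, so $\sup_{j\le\kappa_n}|M_j|=o_p(\kappa_n)$. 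For the drift, Proposition~\ref{prop: LLN-w} yields $S^w_j=\gamma_\theta j+o_p(\kappa_n)$ uniformly over $j\le\eta_n$, and since $(\underline n_b p)\gamma_\theta=(1-\delta_n)\gamma_\theta/\gamma^{(n)}_{\theta}\to1$, this upgrades to $(\underline n_b p)S^w_j=j+o_p(\kappa_n)$ uniformly. Combining the two pieces, on $Q_n$ we obtain $S^b_j\ge\underline S^b_j=j-o_p(\kappa_n)$ uniformly over $j\le\eta_n$, whence $\inf_{j\le\eta_n}(S^b_j-j)\ge-o_p(\kappa_n)$ and $I^b(\eta_n)\le1+o_p(\kappa_n)$, giving~\eqref{bd: Ika} as $\kappa_n\to\infty$.

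The hard part will be the lower bound for $S^b$ behind~\eqref{bd: Ika}. It is genuinely a two-stage argument: one must first secure~\eqref{bd: theta} so that the available pool $\cK^b$ stays of size $\approx n_b$ across the whole range $k\le\eta_n$, which is precisely what legitimises applying Lemma~\ref{lem: lowbd} with $\underline n_b\approx n_b$; and one must then transfer the law of large numbers from $S^w$ to $S^b$ through the conditional martingale structure while the supremum's upper limit $\eta_n$ is itself random. Keeping the crude bounds for~\eqref{bd: theta} logically prior to the martingale refinement for~\eqref{bd: Ika} is what avoids circularity between the pool-size estimate and the root-count estimate.
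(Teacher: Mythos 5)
Your proof is correct and follows essentially the same route as the paper's: the crude upper bound on $S^b_{\eta_n}$ via Lemma~\ref{lem: upbd} and $J^w(\eta_n)\le\kappa_n$ to get~\eqref{bd: theta}, then Lemma~\ref{lem: lowbd} with $\underline n_b=(1-\delta_n)n_b$ on the event $Q_n$, a Doob maximal inequality for the conditionally centred sum, Proposition~\ref{prop: LLN-w} for the drift, and the identity~\eqref{id: I} to convert the uniform lower bound on $S^b_j-j$ into the bound on $I^b(\eta_n)$. The only (harmless) deviations are cosmetic: you control the first part with a first-moment Markov bound where the paper uses conditional variance estimates, and you make explicit the identity $n_b-K^b_k=S^b_{k-1}+I^b(k)$ and the monotonicity of $K^b_k$, which the paper uses implicitly via~\eqref{id: Kset}.
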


\begin{proof}
Let us denote $\eta_n=|\Sigma^b(\kappa_n)|$. 
Applying the upper bound in Lemma~\ref{lem: upbd} and the fact that $J^w(\eta_n)\le |\Sigma(\kappa_n)|=\kappa_n$, we can find two sequences of random variables $(\bar\xi^b_i)_{i\in [n_b]}$ and $(\chi_i)_{i\in[n_b]}$ such that  $(\chi_i)_{i\in [n_b]}$ is i.i.d.~$\cB(n_b, p)$ and given $(O^w_i)_{i\in [n_b]}$, $\{\bar\xi^b_i: i\in [n_b]\}$ is an independent collection with respective distributions $\cB(O^w_i\cdot n_b, p)$, as well as 
\begin{equation}
\label{bd: Sbup1}
\forall\,k\le \eta_n: \quad 0\le S^b_k\le S^b_{\eta_n}  \le \sum_{i\in [\kappa_n]}\bar\xi^b_i+\sum_{i\in [\kappa_n]}\chi_i.
\end{equation}
where we also used the fact that $\eta_n \leq \kappa_n$. Let us denote $\hat\gamma^{(n)}_{\theta}=n_bp=\sqrt{n_b/n_w}$, which converges to $\gamma_{\theta}^{-1}$ under Assumption~\eqref{hyp: theta}. 
Recall that $S^w_k=\sum_{i\le k}O^w_i$. The distribution  of $(\bar\xi^b_i)_{i\ge 1}$ implies that 
\[
\mathbb E\Big[\sum_{i\in [\kappa_n]}\bar\xi^b_i\,\Big|S^w_{\kappa_n}\Big] = \hat\gamma_{\theta}^{(n)}\cdot S^w_{\kappa_n}, \quad \Var\Big(\sum_{i\in [\kappa_n]}\bar\xi^b_i\,|\,S^w_{\kappa_n}\Big)\le \hat\gamma^{(n)}_{\theta}\cdot S^w_{\kappa_n}.
\]
Markov's inequality then yields that for any $\vep >0$, 
\begin{equation}
\label{bd: Sb11}
\mathbb P\Big(\sum_{i\in [\kappa_n]}\bar\xi^b_i -  \hat\gamma^{(n)}_{\theta}\cdot S^w_{\kappa_n} \ge \vep \kappa_n\,\Big|S^w_{\kappa_n}\Big) \le \frac{\hat\gamma^{(n)}_{\theta}\cdot S^w_{\kappa_n}}{\vep^2\kappa_n^2}\xrightarrow{n\to\infty} 0,
\end{equation}
since $S^w_{\kappa_n}=O_p(\kappa_n)$ according to ~\eqref{bd: Sw1}. 
A similar argument shows that 
\begin{equation}
\label{cv: chi}
\mathbb P\Big(\sum_{i\in [\kappa_n]}\chi_i \ge  (\hat\gamma^{(n)}_{\theta}+ \vep )\kappa_n\Big)\xrightarrow{n\to\infty}0. 
\end{equation}
Combining this with~\eqref{bd: Sb11}, the convergence of $\hat\gamma^{(n)}_{\theta}\to\gamma_{\theta}^{-1}$ and the convergence of $S^w_k$ in Proposition~\ref{prop: LLN-w}, we deduce that 
\[
\sum_{i\in [\kappa_n]}\bar\xi^b_i + \sum_{i\in [\kappa_n]}\chi_i = O_p(\kappa_n), \quad \text{as }n\to\infty. 
\]
Thanks to~\eqref{bd: Sbup1}, this implies $
S^b_{\eta_n} = O_p(\kappa_n)$ as $n\to\infty$. 

Note that $\kappa_n/n_b=o(\delta_n)$, as $n\to\infty$, as well as $I^b(k)\le k$ for all $k$. Recall from~\eqref{id: Kset} the disjoint decomposition of $V^b_n\setminus\cK^b_{k}$. The above then implies  
\[
n_b-K^b_{\eta_n+1}=S^b_{\eta_n}+I^b(\eta_n+1)\le S^b_{\eta_n}+\eta_n+1 = O_p(\kappa_n)=o_p(\delta_n\cdot n_b), \quad\text{as }n\to\infty,
\]
from which~\eqref{bd: theta} follows. We now use this to construct a lower bound for $S^b$, which will then imply the upper bound~\eqref{bd: Ika} for $I^b(\eta_n)$.  
Applying Lemma~\ref{lem: lowbd} with $\underline n_b=(1-\delta_n)n_b$, we can find a sequence of independent variables 
$(\underline \xi^b_i)_{i\in [n_b]}$ with respective distributions $\cB((1-\delta_n)O^w_i n_b, p)$. Moreover, setting $\underline S^b_k=\sum_{i\le k} \underline \xi^b_i$, we have 
\begin{equation}
\label{bd: Sb}
\forall k\in [n_b]: \quad S^b_k\ge \underline S^b_k\mathbf 1_{\{K^b_{k+1}\ge (1-\delta_n)n_b\}}.
\end{equation}
We also note that $\{\underline S^b_k-(1-\delta_n)S^w_k \hat\gamma^{(n)}_{\theta}: k\in [n_b]\}$  is a martingale with respect to the natural filtration of $(S^w_i)_{i\ge 1}$, and we have 
\[
\Var\big(\underline S^b_k\,|\,(O^w_i)_{i\le k}\big) = \sum_{i\le k}\Var\big(\underline\xi^b_i\,|\,O^w_i\big) \le \hat\gamma^{(n)}_{\theta} \cdot S^w_k.
\]
We then deduce from Doob's maximal inequality and the fact that $S^w_{\kappa_n}=O_p(\kappa_n)$  that for any $\vep>0$, 
\[
\mathbb P\Big(\sup_{k\le \kappa_n}|\underline S^b_k- \hat\gamma^{(n)}_{\theta}\cdot S^w_k \big| \ge \vep \kappa_n\Big) \xrightarrow{n\to\infty }0. 
\]
Combined with Proposition~\ref{prop: LLN-w} and the convergences $\delta_n\to0$, $\hat\gamma^{(n)}_{\theta}\to \gamma_{\theta}^{-1}$, this implies that
\begin{equation}
\label{bd: loSbk}
\mathbb P\Big(\sup_{k\le \eta_n}\big|\underline S^b_k- k\big|\ge \vep \kappa_n\Big) \xrightarrow{n\to\infty}0. 
\end{equation}
Hence, we have 
\begin{align}\notag
&\quad\;\mathbb P\Big(\inf_{k\le \eta_n}\big(S^b_k-k\big)\le -\vep\kappa_n\Big) \\ \notag
&\le \mathbb P\Big( \inf_{k\le \eta_n}\big(S^b_k-k\big) \le -\vep\kappa_n; K^b_{\eta_n+1}\ge (1-\delta_n)n_b\Big) + \mathbb P\big(K^b_{\eta_n+1}< (1-\delta_n)n_b\big) \\ \label{Sbklwbd}
& \le \mathbb P\Big( \inf_{k\le \eta_n}\big(\underline S^b_k-k\big) \le -\vep\kappa_n\Big) + \mathbb P\big(K^b_{\eta_n+1}< (1-\delta_n)n_b\big)\xrightarrow{n\to\infty}0,
\end{align}
as a result of~\eqref{bd: loSbk} and~\eqref{bd: theta}. The bound for $I^b(\eta_n)$ now follows as a result of~\eqref{id: I}. 
\end{proof}

We explain here how Proposition~\ref{prop: bound-compo} is implied by Lemma~\ref{lem: Ib}. 

\begin{proof}[Proof of Proposition~\ref{prop: bound-compo}]
We utilise the coupling described in the paragraph {\bf the dual problem}. Let $\cJ^w$ denote the set of white lead vertices in $G(n_b, n_w, p)$; in particular, we have
\[
J^w(k) = |\cJ^w\cap \Sigma(\tau^b_k)|. 
\]
We then denote by $\hat\cJ^b$ the set of black lead vertices in $\hat G(n_b, n_w, p)$. Since a black lead vertex is also a root vertex, we have 
\[
\big|\hat \cJ^b\cap \hat\Sigma\big(\hat\tau^b_{|\hat\Sigma^b(\kappa_n)|}\big)\big|\le \hat I^b\big(\big|\hat\Sigma^b(\kappa_n)\big|\big),
\]
where $\hat I^b(k)$ stands for the set of root vertices up to the rank $\hat\tau^b_k$ in $\hat G(n_b, n_w, p)$. 
We note that $(\hat K_{n_b, n_w})_{n\ge 1}$ is itself a sequence of complete bipartite graphs that satisfies the assumptions of Lemma~\ref{lem: Ib}, with the proportion of black vertices converging to $\hat\theta =1-\theta$. In consequence, we can apply Lemma~\ref{lem: Ib} to find 
\[
\big|\hat \cJ^b\cap \hat\Sigma\big(\hat\tau^b_{|\hat\Sigma^b(\kappa_n)|}\big)\big|\le \hat I^b\big(\big|\hat\Sigma^b(\kappa_n)\big|\big)=o_p(\kappa_n), \quad n\to\infty. 
\]
Thanks to the coupling, we then have 
\begin{equation}
\label{cv: Jw}
\big|\cJ^w\cap\Sigma\big(\tau^w_{|\Sigma^w(\kappa_n)|}\big)\big|=\big|\hat \cJ^b\cap \hat\Sigma\big(\hat\tau^b_{|\hat\Sigma^b(\kappa_n)|}\big)\big|=o_p(\kappa_n), \quad n\to\infty. 
\end{equation}
Finally, let us show that 
\begin{equation}
\label{bd: J}
J^w(|\Sigma^b(\kappa_n)|)= \big|\cJ^w\cap\Sigma\big(\tau^b_{|\Sigma^b(\kappa_n)|}\big)\big| \le \big|\cJ^w\cap\Sigma\big(\tau^w_{|\Sigma^w(\kappa_n)|}\big)\big|. 
\end{equation}
Indeed, the first is straightforward from the definition of $J^w$. For the inequality above, let us consider the colour of $\sigma(\kappa_n)$. If this is a white vertex, then we necessarily have
\[
\tau^w_{|\Sigma^w(\kappa_n)|}=\kappa_n \quad \text{and}\quad \tau^b_{|\Sigma^b(\kappa_n)|}<\tau^w_{|\Sigma^w(\kappa_n)|},
\]
so that the bound in~\eqref{bd: J} follows. In the opposite case, we have $\tau^w_{|\Sigma^w(\kappa_n)|}\le \kappa_n-1$; moreover, there is no white vertex between the ranks $\tau^w_{|\Sigma^w(\kappa_n)|}+1$ and $\tau^b_{|\Sigma^b(\kappa_n)|}$. Hence, the inequality in~\eqref{bd: J} still holds, which is in fact an equality in this case.  

The conclusion now follows from~\eqref{bd: J} and~\eqref{cv: Jw}. 
\end{proof}

Before ending this subsection, we prove an analogue of Proposition~\ref{prop: LLN-w} for $S^b$. 
\begin{Prop}[Law of large numbers for $S^b$]
\label{prop: LLN-b}
For all $\vep>0$, the following convergence holds:
\begin{equation}
\label{cv: Sb}
\mathbb P\Big(\sup_{k\le |\Sigma^b(\kappa_n)|}\Big|S^b_k-k\Big| \ge \vep \kappa_n\Big) \xrightarrow{n\to\infty} 0.
\end{equation}
\end{Prop}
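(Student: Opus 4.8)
The plan is to establish matching upper and lower bounds for $S^b_k - k$, uniformly over $k \le \eta_n := |\Sigma^b(\kappa_n)|$, using exactly the machinery already assembled in the proof of Lemma~\ref{lem: Ib}. In fact the lower tail is already in hand: the chain of inequalities culminating in~\eqref{Sbklwbd} shows that $\mathbb P\big(\inf_{k\le \eta_n}(S^b_k-k)\le -\vep\kappa_n\big)\to 0$. So the only thing left to produce is the matching upper tail $\mathbb P\big(\sup_{k\le \eta_n}(S^b_k-k)\ge \vep\kappa_n\big)\to 0$, after which \eqref{cv: Sb} follows by combining the two.

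For the upper tail I would start from $O^b_k=\Delta_k+(O^b_k-\Delta_k)$ together with the two inequalities of Lemma~\ref{lem: upbd}. Summing and using the monotonicity of $k_0\mapsto J^w(k_0)$ and the nonnegativity of the $\chi_i$ in~\eqref{bd: Delta}, one gets, for every $k_0\le \eta_n$,
\[
S^b_{k_0}=\sum_{k\le k_0}\Delta_k+\sum_{k\le k_0}(O^b_k-\Delta_k)\le \sum_{k\le k_0}\bar\xi^b_k+\sum_{i\le J^w(\eta_n)}\chi_i.
\]
The second term is negligible: Proposition~\ref{prop: bound-compo} gives $J^w(\eta_n)=o_p(\kappa_n)$, and since $(\chi_i)$ is i.i.d.\ with mean $n_bp=\hat\gamma^{(n)}_{\theta}=O(1)$, a law of large numbers (or a Doob maximal inequality) for the partial sums of $(\chi_i)$ forces $\sum_{i\le J^w(\eta_n)}\chi_i=o_p(\kappa_n)$.

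The main term $\sum_{k\le k_0}\bar\xi^b_k-k_0$ requires a uniform-in-$k_0$ estimate. Conditioning on $(O^w_i)_{i\in[n_b]}$ — under which the $\bar\xi^b_k$ are independent with $\bar\xi^b_k\sim\cB(O^w_k n_b,p)$ — the process $M_m:=\sum_{k\le m}\bar\xi^b_k-\hat\gamma^{(n)}_{\theta}S^w_m$ is a mean-zero martingale whose conditional increment variances sum to at most $\hat\gamma^{(n)}_{\theta}S^w_{k_0}$. Applying Doob's $L^2$ inequality conditionally and then invoking $S^w_{\kappa_n}=O_p(\kappa_n)$ from~\eqref{bd: Sw1}, exactly as was done for the single-index bound~\eqref{bd: Sb11}, yields $\sup_{m\le\kappa_n}|M_m|=o_p(\kappa_n)$. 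Writing $\sum_{k\le k_0}\bar\xi^b_k-k_0=M_{k_0}+\hat\gamma^{(n)}_{\theta}\big(S^w_{k_0}-\gamma_{\theta}k_0\big)+\big(\hat\gamma^{(n)}_{\theta}\gamma_{\theta}-1\big)k_0$, the first summand is $o_p(\kappa_n)$ by the preceding step, the second is controlled uniformly over $k_0\le\eta_n$ by Proposition~\ref{prop: LLN-w}, and the third is $o(\kappa_n)$ deterministically since $\hat\gamma^{(n)}_{\theta}\gamma_{\theta}\to 1$. Hence $\sup_{k_0\le\eta_n}\big(\sum_{k\le k_0}\bar\xi^b_k-k_0\big)=o_p(\kappa_n)$, and adding back the correction term gives the upper tail.

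I do not anticipate a genuine obstacle here, as every ingredient was already set up for Lemma~\ref{lem: Ib}; the proof is largely a reassembly. The one point needing care is upgrading the fixed-index Chebyshev estimate~\eqref{bd: Sb11} to a uniform-in-$k_0$ statement, which is handled by replacing Markov's inequality with Doob's maximal inequality applied conditionally on $(O^w_i)_i$, where the random conditional quadratic variation is tamed by the established tightness $S^w_{\kappa_n}=O_p(\kappa_n)$. The second mildly delicate bookkeeping item is the random number of summands $J^w(\eta_n)$ in the correction term, which is resolved by the $o_p(\kappa_n)$ bound of Proposition~\ref{prop: bound-compo}.
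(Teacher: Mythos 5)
Your proposal is correct and follows essentially the same route as the paper: the lower tail via the chain~\eqref{Sbklwbd} from Lemma~\ref{lem: Ib}, the improved upper bound $S^b_k\le \bar S^b_k+\sum_{i\le J^w(\eta_n)}\chi_i$ (the paper's~\eqref{bd: Sbup2}), the martingale $\bar S^b_k-\hat\gamma^{(n)}_{\theta}S^w_k$ controlled uniformly by Doob's maximal inequality together with Proposition~\ref{prop: LLN-w}, and the correction term killed by Proposition~\ref{prop: bound-compo} after splitting on $\{J^w(\eta_n)\ge\delta\kappa_n\}$. The only cosmetic difference is that you make the centering decomposition $\bar S^b_{k_0}-k_0=M_{k_0}+\hat\gamma^{(n)}_{\theta}(S^w_{k_0}-\gamma_{\theta}k_0)+(\hat\gamma^{(n)}_{\theta}\gamma_{\theta}-1)k_0$ explicit where the paper compresses it into one sentence.
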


\begin{proof}
We shorthand $\eta_n=|\Sigma^b(\kappa_n)|$. 
In view of~\eqref{Sbklwbd}, it suffices to show that 
\begin{equation}
\label{cv: sb1}
\mathbb P\Big(\sup_{k\le \eta_n}\big(S^b_k-k\big) \ge \vep \kappa_n\Big) \xrightarrow{n\to\infty} 0.
\end{equation}
To this end, we improve the upper bound in~\eqref{bd: Sbup1} with 
\begin{equation}
\label{bd: Sbup2}
\forall\,k\le \eta_n: \quad S^b_k \le \bar S^b_k + \hat S^b_{\eta_n}, \ \text{where }\bar S^b_k:=\sum_{i\le k}\bar\xi^b_i,\ \hat S^b_{\eta_n}:= \sum_{j\le J^w(\eta_n)}\chi_j, 
\end{equation}
where $(\bar\xi^b_i)_{i\ge 1}$ and $(\chi_i)_{i\ge 1}$ are as in Lemma~\ref{lem: upbd}.  
Denote $\hat\gamma^{(n)}_{\theta}=n_bp = \sqrt{n_b/n_w}$. 
We note that $\bar S^b_k-\hat\gamma^{(n)}_{\theta}\cdot S^w_k, k\ge 1$, is a martingale with respect to the natural filtration of $(S^w_k)_{k\ge 1}$. Straightforward computations then yield:
\[
\mathbb E\big[\bar S^b_k\,\big|\, S^w_k\big] = \hat\gamma^{(n)}_{\theta}S^w_k\quad\text{and}\quad \Var\big(\bar S^b_k\,\big|\, S^w_k\big) \le \hat\gamma^{(n)}_{\theta}S^w_k. 
\]
Proceeding as previously, we obtain from Doob's maximal inequality that for any $\vep>0$, 
\[
\mathbb P\Big(\sup_{k\le \kappa_n}\big|\bar S^b_k-\hat\gamma^{(n)}_{\theta}S^w_k\big|\ge \vep\kappa_n\Big) \xrightarrow{n\to\infty}0.
\]
Combined with Proposition~\ref{prop: LLN-w}, this implies that 
\begin{equation}
\label{cv: barS}
\mathbb P\Big(\sup_{k\le \eta_n}\big|\bar S^b_k-k\big|\ge \tfrac12\vep\kappa_n\Big) \xrightarrow{n\to\infty}0.
\end{equation}
On the other hand, similar to the bound~\eqref{cv: chi}, we find that for $\delta>0$, 
\[
\mathbb P\Big(\sum_{i\le \delta \kappa_n}\chi_i \ge 2\hat\gamma^{(n)}_{\theta}\delta\kappa_n\Big)\xrightarrow{n\to\infty}0. 
\]
Combined with Proposition~\ref{prop: bound-compo} and the convergence $\hat\gamma^{(n)}_{\theta}\to \gamma_{\theta}^{-1}$, this yields for $\delta \in (0, \vep\gamma_{\theta}/4)$, 
\[
\mathbb P\Big(\hat S^b_{\eta_n}\ge \tfrac12\vep\kappa_n\Big) \le \mathbb P\Big(J^w(\eta_n)\ge \delta \kappa_n\Big) + \mathbb P\Big(\sum_{i\le \delta\kappa_n}\chi_i\ge \tfrac12\vep \kappa_n\Big)\xrightarrow{n\to\infty}0.
\]
Together with~\eqref{bd: Sbup2} and~\eqref{cv: barS}, this proves~\eqref{cv: sb1}, which then completes the proof. 
\end{proof}

\subsection{Proof of Proposition~\ref{prop: LLN}}
\label{sec: limit-sub}

Recall respectively from~\eqref{def: Ab} and~\eqref{def: Aw} the sets $\cA^b_k$ and $\cA^w_k$, and recall that $A^b(k)$ and $A^w(k)$ are their respective sizes. Recall from Lemma~\ref{lem: Aset} that $R(k)=S^w_{k-1}-A^w(k)$. We shall prove Proposition~\ref{prop: LLN} by combining Proposition~\ref{prop: LLN-w} with an upper bound on $A^w(k)$.

\begin{Lem}
\label{lem: Ab}
For all $\vep>0$, we have
\[
\mathbb P\Big(\sup_{k\le |\Sigma^b(\kappa_n)|}A^b(k)\ge \vep \kappa_n\Big) \xrightarrow{n\to\infty}0.
\]
\end{Lem}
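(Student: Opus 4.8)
The plan is to rewrite $A^b(k)$ entirely in terms of the walk $W_k := S^b_k - k$ and its running infimum, thereby reducing the statement to the law of large numbers for $S^b$ already established in Proposition~\ref{prop: LLN-b}.

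First I would combine the two identities of Lemma~\ref{lem: Aset}. Substituting $I^b(k) = 1 - \inf_{j\in[k-1]}(S^b_j - j)$ from~\eqref{id: I} into $A^b(k) = S^b_k - k + I^b(k)$ yields, writing $W_j := S^b_j - j$,
\[
A^b(k) = W_k - \inf_{j\le k-1} W_j + 1.
\]
Thus $A^b(k)$ is exactly the reflected walk, i.e.\ the height of $W$ above its running minimum, shifted by one (with the convention $\inf_{j\in[0]} = 0$ for the base case $k=1$, consistent with $I^b(1)=1$). A useful sanity check is that $W_k - W_{k-1} = O^b_k - 1 \ge -1$, so $W$ decreases by at most one per step and $A^b(k)\ge 0$ as it must, being $|\cA^b_k|$.

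Second, I would bound the running supremum by the total oscillation of $W$:
\[
\sup_{k\le\eta_n} A^b(k) \le \sup_{k\le\eta_n} W_k - \inf_{k\le\eta_n} W_k + 1 \le 2\sup_{k\le\eta_n}|W_k| + 1,
\]
where $\eta_n := |\Sigma^b(\kappa_n)|$. Finally, Proposition~\ref{prop: LLN-b} states precisely that $\sup_{k\le\eta_n}|W_k| = o_p(\kappa_n)$, so for each fixed $\vep>0$,
\[
\mathbb P\Big(\sup_{k\le\eta_n}A^b(k)\ge\vep\kappa_n\Big) \le \mathbb P\Big(\sup_{k\le\eta_n}|W_k|\ge\tfrac12(\vep\kappa_n-1)\Big)\xrightarrow{n\to\infty}0,
\]
which is the claim.

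At the level of this lemma there is no genuine obstacle: the heavy lifting is already contained in Proposition~\ref{prop: LLN-b}, and the only input needed here is the algebraic identity above together with the crude oscillation bound. The one point deserving care is tracking the conventions in~\eqref{id: I} at $k=1$, and noting that the additive constant $+1$ is harmless since $o_p(\kappa_n)$ dominates it.
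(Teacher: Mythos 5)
Your proof is correct, and it is a mild streamlining of the paper's argument rather than a different one. The paper proves this lemma in one line from the same starting identity $A^b(k)=S^b_k-k+I^b(k)$ of Lemma~\ref{lem: Aset}, controlling the two terms separately: the walk term by~\eqref{cv: Sb} and the term $I^b$ by the bound~\eqref{bd: Ika} established in Lemma~\ref{lem: Ib}. You instead substitute the second identity~\eqref{id: I} of Lemma~\ref{lem: Aset} to write $A^b(k)$ as the reflected walk $W_k-\inf_{j\le k-1}W_j+1$, which lets you conclude from Proposition~\ref{prop: LLN-b} alone; this is legitimate and makes the lemma independent of~\eqref{bd: Ika} (unsurprisingly, since~\eqref{bd: Ika} is itself deduced in the paper from~\eqref{id: I} together with the one-sided bound~\eqref{Sbklwbd} underlying~\eqref{cv: Sb}), so the two routes use the same inputs at one remove. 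One small bookkeeping point: your intermediate display $\sup_{k\le\eta_n}A^b(k)\le \sup_{k\le\eta_n}W_k-\inf_{k\le\eta_n}W_k+1$ is literally false for the $k=1$ term if the infimum on the right ranges over $k\ge 1$ and $W$ stays strictly positive (e.g.\ $\eta_n=1$, $W_1=5$ gives $A^b(1)=6>1$); the fix is to include $W_0=0$ in the infimum, which is exactly your stated convention $\inf_{j\in[0]}=0$, and with $-\min\bigl(0,\inf_{j\le k-1}W_j\bigr)\le \sup_{j\le \eta_n}|W_j|$ the final bound $\sup_{k\le\eta_n}A^b(k)\le 2\sup_{k\le\eta_n}|W_k|+1$ and the conclusion stand unchanged.
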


\begin{proof}
According to Lemma~\ref{lem: Aset}, $A^b(k) = S^b_k-k + I^b(k)$. The conclusion is now immediate from~\eqref{cv: Sb} and~\eqref{bd: Ika}. 
\end{proof}

\begin{Lem}
\label{lem: AbAw}
Let $\vep\in (0, \gamma_{\theta}^{-1}\wedge 1)$. We have
\[
\liminf_{n\to\infty}\inf_{k\le \kappa_n}\mathbb P\big(A^b(k)\ge \vep^2\kappa_n\,\big|\, A^w(k)\ge \vep \kappa_n\big) =1. 
\]
\end{Lem}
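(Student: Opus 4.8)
The plan is to reduce the statement to a lower bound on the number of black vertices produced by the active white set, and then to establish that bound by concentration. The starting point is a deterministic containment coming from the exploration. Every active white vertex lies in $\cN(\Sigma^b(\tau^b_k))$: by definition $\cA^w_k\subseteq\bigcup_{j\le k-1}\cO^w_j$ and each such vertex was found as a $1$-neighbour of an explored black vertex. Consequently $\cN(\cA^w_k)\setminus\Sigma^b(\tau^b_k)\subseteq\cN^2(\Sigma^b(\tau^b_k))=\cA^b_k$, so that $A^b(k)\ge|\cN(\cA^w_k)\setminus\Sigma^b(\tau^b_k)|$. It therefore suffices to show that, with high probability and uniformly in $k\le\kappa_n$, the at least $\vep\kappa_n$ active white vertices have at least $\vep^2\kappa_n$ distinct black neighbours outside $\Sigma^b(\tau^b_k)$.

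To quantify these black vertices I would exploit the structure of Algorithm~\ref{algo: 2-explore}. When a white vertex $w\in\cO^w_j$ is processed, the set $\cN(w)\cap\cK^b_j$ of black vertices it discovers is, conditionally on $\cG^b_j$, a binomial sample with parameters close to $(n_b,p)$, and these samples are pairwise disjoint across distinct white vertices by Lemma~\ref{lem: decomp} and Proposition~\ref{prop: incre}. Since $K^b_j=n_b-o_p(n_b)$ throughout the range $j\le\kappa_n$ by~\eqref{bd: theta}, each processed white vertex contributes in expectation $\approx n_bp=\hat\gamma^{(n)}_\theta\to\gamma_\theta^{-1}$ fresh, distinct black vertices. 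Applying the lower-bound coupling of Lemma~\ref{lem: lowbd} to the active white vertices and running Doob's maximal inequality exactly as in the proof of Proposition~\ref{prop: LLN-w}, the number of black vertices discovered through $\cA^w_k$ is at least $(\gamma_\theta^{-1}-o(1))\vep\kappa_n$ with high probability, uniformly in $k$. The hypothesis $\vep<\gamma_\theta^{-1}$ is used precisely here: it guarantees $\gamma_\theta^{-1}\vep>\vep^2$, leaving a linear amount of slack between the bound produced and the target $\vep^2\kappa_n$.

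The main obstacle is reconciling ``black vertices discovered through $\cA^w_k$'' with the active set $\cA^b_k$, since a black vertex discovered through an active white vertex at an early step $j<k$ may have been explored by time $k$ (its Prim rank may have fallen below $\tau^b_k$), in which case it sits in $\Sigma^b(\tau^b_k)$ and is lost. I would handle this by showing that, apart from exceptions governed by the surplus edges of the component containing $\cA^w_k$ (which is $o_p(\kappa_n)$ in this near-critical window), a black vertex discovered through an active white vertex is a descendant of it in the exploration tree, hence of Prim rank exceeding $\tau^b_k$, and so genuinely belongs to $\cA^b_k$; the linear slack then absorbs the $o_p(\kappa_n)$ loss. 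A second delicate point, stemming from the fact that the conditioning event $\{A^w(k)\ge\vep\kappa_n\}$ may be rare, is that the estimates above must be assembled into a single event $\cE_n$ on which the deterministic inclusion $\{A^w(k)\ge\vep\kappa_n\}\subseteq\{A^b(k)\ge\vep^2\kappa_n\}$ holds simultaneously for all $k\le\kappa_n$; one then has $\mathbb P(A^w(k)\ge\vep\kappa_n,\,A^b(k)<\vep^2\kappa_n)\le\mathbb P(\cE_n^c)$, and the conditional statement follows provided $\mathbb P(\cE_n^c)$ is negligible relative to $\mathbb P(A^w(k)\ge\vep\kappa_n)$. Because all the deviations controlled above concern sums of conditionally Bernoulli variables with means of order $\kappa_n\to\infty$, I would use Chernoff rather than Chebyshev bounds so that $\mathbb P(\cE_n^c)$ decays exponentially in $\kappa_n$, which together with Lemma~\ref{lem: Ab} yields the uniform-in-$k$ conclusion.
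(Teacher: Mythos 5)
Your opening step is exactly the paper's: the inclusion $\cN(\cA^w_k)\setminus\Sigma^b(\tau^b_k)\subseteq\cA^b_k$ is~\eqref{eq: NA-set}, and reducing the lemma to counting black neighbours of the active whites is the right reduction. But your strategy for the actual probabilistic estimate has a genuine gap. You propose annealed high-probability bounds (Lemma~\ref{lem: lowbd}, Doob, a surplus-edge argument) assembled into a global event $\cE_n$, followed by the ratio bound $\mathbb P(A^b(k)<\vep^2\kappa_n\mid A^w(k)\ge\vep\kappa_n)\le\mathbb P(\cE_n^c)/\mathbb P(A^w(k)\ge\vep\kappa_n)$ with $\mathbb P(\cE_n^c)$ decaying exponentially in $\kappa_n$. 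This cannot give the required uniformity in $k\le\kappa_n$: at criticality, $A^w(k)\le S^w_{k-1}$ is a sum of $k-1$ conditionally binomial variables of mean $\approx\gamma_\theta$, so for $k$ bounded, or more generally $k=o(\kappa_n)$, the event $\{A^w(k)\ge\vep\kappa_n\}$ has probability of order $e^{-c\kappa_n\log(\kappa_n/k)}$ (and is null for $k=1$) --- superexponentially small in $\kappa_n$. No Chernoff bound $\mathbb P(\cE_n^c)\le e^{-c\kappa_n}$ is negligible relative to all of these, and even for $k\asymp\kappa_n$ the two exponential rates would have to be compared with matching constants, which your sketch does not control. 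The auxiliary difficulty you flag --- black vertices discovered through an active white at step $j<k$ may already be explored by time $k$ --- suffers the same disease: your proposed ``surplus edges are $o_p(\kappa_n)$'' fix is again an unconditional estimate, and conditioning on the atypical event $\{A^w(k)\ge\vep\kappa_n\}$ can distort exactly such quantities.

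The paper avoids the rare-event conditioning entirely by choosing a witness whose conditional law \emph{given the conditioning event} is known exactly. Instead of tracking black vertices revealed during the exploration (your $\cN(w)\cap\cK^b_j$, measurable in the $\cG^b$-filtration), it counts at time $k$ the fresh neighbours of the currently active whites among the $n_b-k$ unexplored blacks: listing $\cA^w_k=\{u_1,\dots,u_\ell\}$, it sets $\cX_k(i)=\cN(u_i)\setminus(\bigcup_{i'<i}\cX_k(i')\cup\Sigma^b(\tau^b_k))$ and $X=\sum_i|\cX_k(i)|\le A^b(k)$. Since $\cA^w_k$ and hence $\{A^w(k)\ge\vep\kappa_n\}$ are $\cG^w_k$-measurable, while the relevant edges lie in $F_{\tau^b_k-1}$, Lemma~\ref{lem: Fk}~\emph{(iii)} (via Lemma~\ref{lem: Gk}) gives that conditionally on $A^w(k)=\ell$ the count $X$ has law $\cD(n_b-k,\ell,p)$, exactly; a plain second-moment (Chebyshev-type) bound on $\cD(n_b-k,\lfloor\vep\kappa_n\rfloor,p)$, using $\vep<\gamma_\theta^{-1}$ so that $\vep\kappa_n\cdot(n_b-k)p\approx\vep\gamma_\theta^{-1}\kappa_n>\vep^2\kappa_n$, then bounds the conditional probability directly, uniformly in $k$, with no division by the probability of the conditioning event. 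Your appeal to $\vep<\gamma_\theta^{-1}$ for the linear slack is the right heuristic, but without the fresh-edge conditioning device the proof as proposed does not go through. (Your closing citation of Lemma~\ref{lem: Ab} is also misplaced: it is an upper bound on $A^b$, used only later in Lemma~\ref{lem: O-R}.)
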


\begin{proof}
Let us first show that 
\begin{equation}
\label{eq: NA-set}
\cN\big(\cA^w_k\big)\setminus\Sigma^b(\tau^b_k)\subseteq \cA^b_k. 
\end{equation}
Let $u\in\cN(\cA^w_k)\setminus \Sigma^b(\tau^b_k)$.  
Since $\cA^w_k\subseteq \cup_{j\le k-1}\cO^w_j$ and $\cO^w_j\subseteq \cN(\sigma^b(j))$, we can find some $j\le k-1$ and $v\in \cN(\sigma^b(j))$ so that $u$ is adjacent to $v$. It follows that 
\[
u\in \bigcup_{j\le k-1}\cN^2(\sigma^b(j))\setminus \Sigma^b(\tau^b_k) \subseteq \cN^2\big(\Sigma^b(\tau^b_{k-1})\big)\setminus\Sigma^b(\tau^b_k)\subseteq \bigcup_{j\le k-1}\cO^b_j\setminus\Sigma^b(\tau^b_k)\subseteq\cA^b_k,
\]
where we have used Lemma~\ref{lem: Kset} {\it (iv)} in the second inclusion.

We introduce the following partition of $\cN(\cA^w_k)\setminus\Sigma^b(\tau^b_k)$. Assume that $\cA^w_k=\{u_1, u_2, \cdots, u_{\ell}\}$, where the $u_i$'s are ranked in a deterministic but arbitrary manner. We then set $\cX_k(1)=\cN(u_1)\setminus \Sigma^b(\tau^b_k)$ and $X_i=|\cX_k(1)|$, and more generally for $2\le i\le A^w(k)$, 
\[
\cX_k(i)=\cN(u_i)\setminus\Big(\bigcup_{i'<i}\cX_k(i')\bigcup\Sigma^b(\tau^b_k)\Big), \quad X_i = |\cX_k(i)|. 
\]
By definition, $\cX_k(i), 1\le i\le \ell$, are disjoint and their union is precisely $\cN(\cA^w_k)\setminus \Sigma^b(\tau^b_k)$. Together with~\eqref{eq: NA-set}, this yields that for each $k\in [n_b]$, 
\begin{equation}
\label{id: count-X}
X:=\sum_{1\le i\le A^w(k)}X_i \le A^b(k). 
\end{equation}
Recall from~\eqref{def: D-dist} the distribution $\cD(n, k, p)$. Next, let us show that given $A^w(k)=\ell$,
\begin{equation}
\label{dist: X}
X=\sum_{1\le i\le \ell} X_i \text{ follows the distribution }\cD(n_b-k, \ell, p).
\end{equation}
The arguments are similar to those in the proof of Proposition~\ref{prop: incre}; so we only outline the main steps. 
Recall the edge set $F_k$ and the sigma-algebra $\cG^w_k=\cF_{\tau^b_k-1}$ from Section~\ref{sec: app}. 
Since $\cA^w_k=\cup_{j\le k-1}\cO^w_j\setminus \Sigma^w(\tau^b_k)$, we deduce from
Lemma~\ref{lem: Gk}  that $\cA^w_k$ is $\cG^w_k$-measurable. 
Moreover, 
on the event $\{u_1\in \cA^w_k\}$, 
the edge set $\{e=\{u_1, v\}: v\in V^b_n\setminus\Sigma^b(\tau^b_k)\}$ is a measurable subset of $F_{\tau^b_k-1}$. We then apply Lemma~\ref{lem: Fk} {\it (iii)} to find that $X_1\sim \cB(n_b-k, p)$. As in the proof of Proposition~\ref{prop: incre}, we can express $\cX_k(i)$ as a function of $\{\{u_i, v\}: v\in V^b_n\setminus \Sigma^b(\tau^b_k)\}$ and $\cX_k(i'), i'<i$, which then implies~\eqref{dist: X}. 

To ease the writing, we denote $N=n_b-k$ and $L=\lfloor \varepsilon\kappa_n\rfloor$. 
We deduce from the assumptions on $\kappa_n$, the condition~\eqref{hyp: theta} and the fact that $p=1/\sqrt{n_bn_w}$ that
\begin{equation}
\label{cv: Np}
np\xrightarrow{n\to\infty} \frac{1}{\sqrt{\theta(1-\theta)}} \quad\text{and}\quad \max_{k\le\kappa_n}\big|Np-\gamma_{\theta}^{-1}\big| \xrightarrow{n\to\infty}0. 
\end{equation}
Take $\delta'_n=\sqrt{1/(n\kappa_n)}$. We note that
\begin{equation}
\label{eq: delta'n}
\delta'_n \kappa_n \xrightarrow{n\to\infty} 0 \quad\text{and}\quad n\delta'_n  \xrightarrow{n\to\infty} \infty.
\end{equation}
Let $(X'_i)_{i\ge 1}$ be an i.i.d.~sequence of random variables with common distribution $\cB(N, p)$, and let $X'=\sum_{1\le i\le  L}X'_i$. Since each $X_i$ is 
stochastically bounded by $X'_i$, conditionally on $(X_j)_{j<i}$, we deduce that for $n$ sufficiently large, 
\begin{align*}
\sup_{k\le\kappa_n}\mathbb P\Big(\sum_{1\le i\le L} X_i\ge \varepsilon N\kappa_n\delta'_n\,\Big|\,A^w(k)\ge \varepsilon \kappa_n\Big)
&\le \sup_{k\le \kappa_n}\mathbb P\big(X'\ge \varepsilon N\kappa_n\delta'_n\big)\\
&\le \sup_{k\le \kappa_n}\mathbb P\big(\big|X'- \mathbb E[X']\big|\ge \varepsilon N \kappa_n\delta'_n-LNp\big) \\
&\le \sup_{k\le \kappa_n}\frac{1}{N}\cdot\frac{Lp}{(\varepsilon  \kappa_n\delta'_n-Lp)^2}\xrightarrow{n\to\infty} 0,
\end{align*}
since $N\ge n_b-\kappa_n\to \infty$ and $Lp=o(\kappa_n\delta'_n)$ as $n\to\infty$ as a result of~\eqref{eq: delta'n}.  
Denote by $Q_{n, j}$ the event that $\sum_{1\le i\le j}X_i< \varepsilon N\kappa_n\delta'_n$. We have shown that
\begin{equation}
\label{eqbd: En}
\sup_{k\le \kappa_n}\mathbb P\big(Q_{n,L}^c\,\big|\, A^w(k)\ge \vep\kappa_n\big)\xrightarrow{n\to\infty}0.
\end{equation}
Given $\sum_{1\le i\le j-1}X_i$, $X_{j}$ has the distribution $\cB(N-\sum_{1\le i\le j-1}X_i, p)$. Arguing as in the proof of Lemma~\ref{lem: lowbd}, we can find a sequence $(X''_i)_{i\ge 1}$ such that given $A^w(k)$, it is i.i.d.~with common distribution $\cB(N-\varepsilon N\kappa_n\delta'_n, p)$, and satisfies 
\[
X_i\ge X''_i\mathbf 1_{Q_{n, i-1}}, \quad 1\le i\le L.
\]
It follows that on the event $\{A^w(k)\ge \varepsilon \kappa_n\}$, 
\begin{equation}
\label{eq: lwX''}
X\ge \sum_{1\le i\le L}X''_i\mathbf 1_{Q_{n, i-1}}\ge \sum_{1\le i\le L}X''_i\mathbf 1_{Q_{n, L}}
\end{equation}
On the other hand, we deduce from~\eqref{cv: Np} and~\eqref{eq: delta'n} that 
\[
\frac{L(1-\varepsilon \kappa_n\delta'_n)Np}{\big(L(1-\varepsilon \kappa_n\delta'_n)Np-\varepsilon^2 \kappa_n\big)^2}\sim \frac{\varepsilon \kappa_n\gamma^{-1}_{\theta}}{(\varepsilon \kappa_n\gamma^{-1}_{\theta}-\varepsilon^2\kappa_n)^2}, 
\]
which tends to 0 as $n\to\infty$ as $\varepsilon<\gamma_{\theta}^{-1}$. 
Moreover, as the convergence of $Np$ in~\eqref{cv: Np} is uniform in $k$, this convergence also hold uniformly for $k\le \kappa_n$. 
Let $X'':=\sum_{1\le i\le L}X''_i$. We have 
\begin{align}\notag
\sup_{k\le \kappa_n}\mathbb P\big(X'' < \varepsilon^2 \kappa_n\big) &\le \sup_{k\le \kappa_n}\mathbb P\big(\big|X''-\mathbb E[X'']\big|\ge L(1-\varepsilon \kappa_n\delta'_n)Np-\varepsilon^2 \kappa_n\big)\\ \label{bd: X''}
&\le \sup_{k\le \kappa_n}\frac{L(1-\varepsilon \kappa_n\delta'_n)Np}{\big(L(1-\varepsilon \kappa_n\delta'_n)Np-\varepsilon^2 \kappa_n\big)^2}\xrightarrow{n\to\infty}0. 
\end{align}
Combined with~\eqref{eq: lwX''} and~\eqref{eqbd: En}, this implies that
\begin{align*}
&\quad\sup_{k\le\kappa_n}\mathbb P\big(X< \vep^2 \kappa_n\,\big|\,A^w(k)\ge \vep\kappa_n \big) \\
&\le \sup_{k\le \kappa_n}\mathbb P\big(X< \vep^2\kappa_n; Q_{n,L}\,\big|\, A^w(k)\ge \vep\kappa_n)+\sup_{k\le\kappa_n}\mathbb P\big(R^c_{n, L}\,\big|\,A^w(k)\ge \vep\kappa_n\big) \\
&\le \sup_{k\le\kappa_n}\mathbb P\big(X''< \vep^2\kappa_n\,\big|\, A^w(k)\ge \vep\kappa_n) +\sup_{k\le\kappa_n}\mathbb P\big(Q_n^c\,\big|\,A^w(k)\ge \vep\kappa_n\big)\xrightarrow{n\to\infty}0.
\end{align*}
Together with~\eqref{id: count-X}, this concludes the proof. 
\end{proof}

\begin{Lem}
\label{lem: O-R}
For all $\vep>0$, we have
\[
\mathbb P\Big(\sup_{k\le |\Sigma^b(\kappa_n)|}A^w(k)\ge \vep \kappa_n\Big) \xrightarrow{n\to\infty}0.
\]
\end{Lem}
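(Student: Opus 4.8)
The plan is to combine the two preceding lemmas. Lemma~\ref{lem: Ab} guarantees that $\sup_{k\le|\Sigma^b(\kappa_n)|}A^b(k)=o_p(\kappa_n)$, while Lemma~\ref{lem: AbAw} says that a large value of $A^w(k)$ forces $A^b(k)$ to be large with high probability. Heuristically, were $A^w(k)$ ever to reach $\vep\kappa_n$, then $A^b(k)$ would exceed $\vep^2\kappa_n$, contradicting the smallness of $\sup_k A^b(k)$. The difficulty is that Lemma~\ref{lem: AbAw} is stated for each fixed $k$, whereas the threshold $\vep\kappa_n$ may first be attained at a random index; a crude union bound over $k\le\kappa_n$ loses a factor $\kappa_n$ and is useless. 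I would therefore run the argument at the first-passage time of $A^w$.

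Fix $\vep\in(0,\gamma_\theta^{-1}\wedge1)$; this entails no loss of generality, since for a larger threshold the event in question is contained in the one for a smaller $\vep$. Recalling from the proof of Lemma~\ref{lem: AbAw} that $\cA^w_k$, hence $A^w(k)$, is $\cG^w_k$-measurable, set
\[
T=\min\{k\ge1: A^w(k)\ge\vep\kappa_n\},\qquad\min\varnothing=\infty,
\]
a stopping time for $(\cG^w_k)_{k\ge1}$. Since each $A^w(k)$ is integer-valued and $|\Sigma^b(\kappa_n)|\le\kappa_n$, the events coincide:
\[
\Big\{\sup_{k\le|\Sigma^b(\kappa_n)|}A^w(k)\ge\vep\kappa_n\Big\}=\{T\le|\Sigma^b(\kappa_n)|\}\subseteq\{T\le\kappa_n\}.
\]

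Next I would split according to the value of $A^b$ at time $T$:
\[
\mathbb P\big(T\le|\Sigma^b(\kappa_n)|\big)\le\mathbb P\Big(\sup_{k\le|\Sigma^b(\kappa_n)|}A^b(k)\ge\vep^2\kappa_n\Big)+\mathbb P\big(T\le\kappa_n,\,A^b(T)<\vep^2\kappa_n\big).
\]
The first term vanishes as $n\to\infty$ by Lemma~\ref{lem: Ab} applied with $\vep^2$ in place of $\vep$. For the second term I would decompose over $\{T=k\}\in\cG^w_k$ and condition:
\[
\mathbb P\big(T\le\kappa_n,\,A^b(T)<\vep^2\kappa_n\big)=\sum_{k\le\kappa_n}\mathbb E\Big[\mathbf 1_{\{T=k\}}\,\mathbb P\big(A^b(k)<\vep^2\kappa_n\,\big|\,\cG^w_k\big)\Big].
\]
Here I would invoke the mechanism behind Lemma~\ref{lem: AbAw} rather than its integrated statement: by~\eqref{id: count-X} and~\eqref{dist: X} one has $A^b(k)\ge X$, where $X$ has conditional law $\cD(n_b-k,A^w(k),p)$ given $\cG^w_k$, this law depending on the past only through the $\cG^w_k$-measurable pair $(k,A^w(k))$. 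On $\{T=k\}\subseteq\{A^w(k)\ge\vep\kappa_n\}$ with $k\le\kappa_n$, the stochastic monotonicity of $\cD(\cdot,\cdot,p)$ in both arguments gives that $X$ dominates a variable of law $\cD(n_b-\kappa_n,\lceil\vep\kappa_n\rceil,p)$, whence
\[
\mathbb P\big(A^b(k)<\vep^2\kappa_n\,\big|\,\cG^w_k\big)\le\delta_n\quad\text{on }\{T=k\},\qquad\delta_n:=\mathbb P\big(\cD(n_b-\kappa_n,\lceil\vep\kappa_n\rceil,p)<\vep^2\kappa_n\big).
\]
The computation in the proof of Lemma~\ref{lem: AbAw} shows $\delta_n\to0$, and summing yields $\sum_{k\le\kappa_n}\delta_n\,\mathbb P(T=k)\le\delta_n\to0$, completing the argument.

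The main obstacle, as flagged, is precisely this passage from a per-index conditional estimate to one at the random time $T$. It is resolved by using the pointwise $\cG^w_k$-conditional form of Lemma~\ref{lem: AbAw}, legitimate because the lower bound on $A^b(k)$ rests only on edge weights that are conditionally i.i.d.\ uniform given $\cG^w_k$ and on the $\cG^w_k$-measurable data $(k,A^w(k))$, together with the stochastic monotonicity of $\cD(n,k,p)$ that lets me replace $A^w(T)$ and the random pool size by their worst cases $\lceil\vep\kappa_n\rceil$ and $n_b-\kappa_n$.
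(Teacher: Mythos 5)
Your proof is correct and takes essentially the same route as the paper: both arguments control $\sup_{k\le|\Sigma^b(\kappa_n)|}A^w(k)$ by combining Lemma~\ref{lem: Ab} with the mechanism of Lemma~\ref{lem: AbAw} at the first index where $A^w$ crosses $\vep\kappa_n$, the paper compressing this into the inequality $\mathbb P\big(\exists\, k\le|\Sigma^b(\kappa_n)|: A^b(k)\ge \vep^2\kappa_n,\, A^w(k)\ge \vep\kappa_n\big)\ge \mathbb P\big(\exists\, k\le|\Sigma^b(\kappa_n)|: A^w(k)\ge \vep\kappa_n\big)\cdot \inf_{k\le \kappa_n}\mathbb P\big(A^b(k)\ge \vep^2\kappa_n\,\big|\,A^w(k)\ge \vep \kappa_n\big)$ and then rearranging with the bound $\inf_k(\cdots)\ge \tfrac12$ for large $n$. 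Your explicit first-passage decomposition at the stopping time $T$, using the $\cG^w_k$-conditional law $\cD(n_b-k, A^w(k), p)$ together with the stochastic monotonicity of $\cD$ (transparent from the identity $\cD(n,\ell,p)=\cB\big(n, 1-(1-p)^{\ell}\big)$), is precisely the justification that the paper's one-line inequality implicitly relies on, so your version is if anything the more carefully rendered of the two.
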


\begin{proof}
It suffices to prove the statement for $\vep<\gamma^{-1}_{\theta}$. Writing $\eta_n=|\Sigma^b(\kappa_n)|$, we note that 
\begin{align*}
&\;  \mathbb P\Big(\max_{k\le\eta_n}A^b(k)\ge \vep^2 \kappa_n; \max_{k\le \eta_n}A^w(k)\ge \vep \kappa_n\Big)
\ge \mathbb P\Big(\exists\,k\le\eta_n: A^b(k)\ge \vep^2\kappa_n; A^w(k)\ge \vep\kappa_n\Big) \\
& \ge \mathbb P(\exists\, k\le\eta_n: A^w(k)\ge \vep\kappa_n\big)\cdot \inf_{k\le \kappa_n}\mathbb P(A^b(k)\ge \vep^2\kappa_n\,|\,A^w(k)\ge \vep \kappa_n)
\end{align*}
Lemma~\ref{lem: AbAw} implies that for $n$ sufficiently large, 
\[
\inf_{k\le \kappa_n}\mathbb P(A^b(k)\ge \vep^2\kappa_n\,|\,A^w(k)\ge \vep \kappa_n)\ge \vep \kappa_n\Big)\ge \frac{1}{2}.
\]
It follows that for those $n$, 
\[
\mathbb P\Big(\max_{k\le\eta_n}A^w(k)\ge \vep \kappa_n\Big)\le 2\mathbb P\Big(\max_{k\le\eta_n}A^b(k)\ge \vep^2 \kappa_n; \max_{k\le \eta_n}A^w(k)\ge \vep \kappa_n\Big)\le 2\mathbb P\Big(\max_{k\le\eta_n}A^b(k)\ge \vep^2\kappa_n\Big).
\]
Thanks to Lemma~\ref{lem: Ab}, the last term above tends to 0. The conclusion follows.
\end{proof}

\begin{proof}[Proof of Proposition~\ref{prop: LLN}]
This is immediate from Lemma~\ref{lem: Aset}, Proposition~\ref{prop: LLN-w} and Lemma~\ref{lem: O-R}. 
\end{proof}

\appendix

\section{Some measurability properties of the graph exploration}
\label{sec: app}

We collect here some technical results that are used in proofs. 
Recall that $E_n$ stands for the edge set of the complete bipartite graph $K_{n_b, n_w}$. For $0\le j\le n$, we define 
\[
F_j=\big\{e=\{u, v\}\in E_n: u\in V^b_n\setminus\Sigma^b(j), v\in V^w_n\setminus\Sigma^w(j)\big\}. 
\]
Then $F_j^c:=E_n\setminus F_j$ contains all the edges that are adjacent to some $\sigma(i), i\in [j]$. Let $\cF_0$ be the sigma-algebra generated by $\sigma(1)$; for $j\ge 1$, let $\cF_j$ be the sigma-algebra generated by 
\[
\big(\sigma(i): i\in [j]\big) \quad\text{and}\quad \big\{U_e: e\in F^c_j\big\}.
\]
Observe that $\cF_0\subseteq \cF_1\subseteq \cF_2\subseteq\cdots$. 

\begin{Lem}
\label{lem: Fk}
The following statements hold true for each $0\le j\le n-1$:
\begin{enumerate}[(i)]
\item 
we have $\sigma(j+1)\in \cF_{j}$;
\item
both $(e_i)_{i \in [j]}$ and $(U_{e_i})_{i\in [j]}$ are $\cF_{j}$-measurable.
\item 
 conditional on $\cF_j$, the collection
$\{U_e: e\in F_j\}$ is distributed as i.i.d.~uniform random variables on $(0, 1)$. 
\end{enumerate}
\end{Lem}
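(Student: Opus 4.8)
The plan is to establish the three claims by induction on $j$, isolating the genuinely probabilistic content in (iii). As a preliminary I would record the monotonicity $\cF_j\subseteq \cF_{j+1}$: since $\Sigma(j)\subseteq\Sigma(j+1)$ we have $F_j^c\subseteq F_{j+1}^c$, and the vertices $\sigma(1),\dots,\sigma(j)$ are among the generators of $\cF_{j+1}$, so every generator of $\cF_j$ generates $\cF_{j+1}$ as well.

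For (i) and (ii) I would argue directly, without any probabilistic input. At step $j+1$ of Algorithm~\ref{algo: Prim} the edge $e_j$ is the minimal-weight edge among those crossing from $\Sigma(j)$ to $V_n\setminus\Sigma(j)$; every such crossing edge is incident to a vertex of $\Sigma(j)$ and therefore lies in $F_j^c$. As $\Sigma(j)$ is a function of the generators $\sigma(1),\dots,\sigma(j)$ of $\cF_j$, the collection of crossing edges is a $\cF_j$-measurable random subset of $F_j^c$, and the associated weights are $\cF_j$-measurable; hence the minimiser $e_j$, its weight $U_{e_j}$, and the new endpoint $\sigma(j+1)$ are all $\cF_j$-measurable. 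This is exactly (i), and applying the same reasoning at every level $i\le j$ together with the monotonicity gives (ii).

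The core of the argument is (iii), which I would prove by induction. For $j=0$ we have $F_0=E_n$ and $\cF_0=\sigma(\sigma(1))$; since $\sigma(1)$ is drawn independently of the edge weights, conditionally on $\cF_0$ the whole family $\{U_e:e\in E_n\}$ remains i.i.d.\ uniform. For the inductive step the key observation is that passing from $\cF_j$ to $\cF_{j+1}$ reveals precisely the weights indexed by
\[
D_{j+1}:=F_{j+1}^c\setminus F_j^c,
\]
the edges incident to $\sigma(j+1)$ but not to $\Sigma(j)$. Because the graph is bipartite these are exactly the edges joining $\sigma(j+1)$ to the still-unexplored vertices of the opposite colour, so $D_{j+1}\subseteq F_j$ and in fact $F_{j+1}=F_j\setminus D_{j+1}$; moreover $D_{j+1}$ is $\cF_j$-measurable because $\sigma(j+1)\in\cF_j$ by (i). Using $\sigma(j+1)\in\cF_j$ one then checks that $\cF_{j+1}=\sigma\big(\cF_j,\{U_e:e\in D_{j+1}\}\big)$.

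It then remains to invoke an elementary revealment fact: if, conditionally on a $\sigma$-algebra $\cG$, a finite family $\{U_e:e\in F\}$ is i.i.d.\ uniform and $D\subseteq F$ is $\cG$-measurable, then conditionally on $\sigma\big(\cG,\{U_e:e\in D\}\big)$ the complementary family $\{U_e:e\in F\setminus D\}$ is again i.i.d.\ uniform. Applying this with $\cG=\cF_j$, $F=F_j$ and $D=D_{j+1}$, and feeding in the inductive hypothesis, shows that conditionally on $\cF_{j+1}$ the family $\{U_e:e\in F_{j+1}\}$ is i.i.d.\ uniform, which closes the induction. The main obstacle is this revealment fact, the delicate point being that the index set $D_{j+1}$ is random rather than fixed; I would resolve it by conditioning on the finitely many possible values of the $\cG$-measurable set $D$, noting that on each event $\{D=D_0\}\in\cG$ the sub-families indexed by $D_0$ and by $F\setminus D_0$ are, given $\cG$, independent and each i.i.d.\ uniform, and then summing over $D_0$. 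The rest is bookkeeping about membership in $F_j^c$, $F_{j+1}^c$ and $F_j$.
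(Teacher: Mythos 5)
Your proposal is correct and follows essentially the same route as the paper's proof: (i) and (ii) via direct measurability of the crossing edges and their weights with respect to the generators of $\cF_j$, and an induction for (iii) based on the same decomposition $F_j=F_{j+1}\cup D_{j+1}$, where $D_{j+1}$ is the $\cF_j$-measurable set of freshly revealed edges incident to $\sigma(j+1)$. The only cosmetic difference is that you isolate the conditioning step as an explicit ``revealment'' lemma, handling the random index set by summing over its finitely many possible values, which the paper carries out inline.
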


\begin{proof}
We fix $n\in \N$ and argue by induction on $j$. For $j=0$, we have $\sigma(1)\in\cF_0$ by definition, and the two sequences in {\it (ii)} are empty. Moreover, we have $F_0=E_n$, and the collection $\{U_e: e\in E_n\}$ is independent of $\sigma(1)$, and thus of $\cF_0$. Suppose that the claim holds for some $j-1$ and let us argue it also holds for $j\ge 1$. We recall from Algorithm~\ref{algo: Prim} that $\sigma(j+1)$ is determined by the weights of the edge set $\{\{u, v\}: u\in \Sigma(j), v\in V_n\setminus\Sigma(j)\}$. As a result, $\sigma(j+1)$ is a function of $\{U_e: e\in F^c_{j}\}$ and $(\sigma(i))_{i\in [j]}$; hence $\sigma(j+1)\in \cF_{j}$.  Similarly, we can argue that $e_j$ and $U_{e_j}$ are determined from $\{U_e: e\in F^c_j\}$ and $(\sigma(i))_{i\in [j]}$, and therefore $\cF_j$-measurable.
The claim for $j-1$ states that given $\cF_{j-1}$,  $\{U_e: e\in F_{j-1}\}$ are i.i.d.~uniform. Since $\sigma(j)\in\cF_{j-1}$, 
this implies that
given $(\sigma(i))_{i\in [j]}$ and $\{U_e: e\in F^c_{j-1}\}$,  the same collection are i.i.d.~uniform. Next, let us denote $D_j=F_{j-1}\setminus F_j$, so that $D_j, F_j$ are disjoint subsets of $F_{j-1}$. 
We note that $D_j$ consists of all the edges $e=\{\sigma(j), v\}$ with $v\in V_n\setminus\Sigma(j)$; consequently, 
both $D_j$ and $F_j$ are functions of $(\sigma(i))_{i\in [j]}$. It follows that given $(\sigma(i))_{i\in [j]}$ and $\{U_e: e\in F^c_{j-1}\}$, $\{U_e: e\in F_{j}\}$ is a collection of i.i.d.~uniform variables and is independent of $\{U_e: e\in D_j\}$. 
Finally, since $F_j^c=F_{j-1}^c\cup D_j$, we conclude that given $\cF_j$, $\{U_e: e\in F_{j}\}$ are i.i.d.~uniformly distributed.
\end{proof}

An integer-valued random variable $T$ is said to be a {\em stopping time} if $\{T\le j\}\in \cF_j$ for all $j\ge 0$. 
Recall that $\tau^b_{k}$ is the rank of the $k$th black vertex in $(\sigma(i))_{i\in [n]}$. For $k\in [n_b]$, we note that $\tau^b_{k}-1$ is a stopping time, since 
\[
\{\tau^b_{k}\le j+1\}= \big\{\big|V^b_n\cap\{\sigma(i): 1\le i\le j+1\}\big|\ge k\big\},
\]
which is $\cF_j$-measurable according to the previous lemma. Then we can define $\cG^w_k:=\cF_{\tau^b_{k}-1}$ as the sigma-algebra generated by the events 
$A\cap \{\tau^b_{k}-1\le j\}$ for all $A\in \cF_j, j\ge 0$. 
Recall from~\eqref{def: Ow} the sets $\cO^w_k$ and $\cK^w_k$ of the random graph $G(n_b, n_w, p)$.   
Define 
\begin{equation}
\label{def: Gwkset}
G^w_k=\big\{e=\{u, v\}\in E_n: u\in V^b_n\setminus\Sigma^b(\tau^b_{k-1}), v\in \cK^w_k\big\}, 
\end{equation}
with the convention $\tau^b_0=0$.
\begin{Lem}
\label{lem: Gk}
Let $p\in [0, 1]$. The following statements hold true for $G(n_b, n_w, p)$ and $k\in [n_b]$. 
\begin{enumerate}[(i)]
\item 
$(\sigma(i))_{i\le \tau^b_k}$ and $\tau^b_k$ are $\cG^w_k$-measurable;
\item 
$\cN(\sigma(\tau^b_{k-1}))$, $(\cO^w_i)_{i\in [k-1]}$, $(\cK^w_i)_{i\in [k]}$ and $G^w_k$ are $\cG^w_k$-measurable;
\item 
conditional on $\cG^w_k$, $\{U_e: e\in G^w_k\}$ are i.i.d.~uniformly distributed on $(0, 1)$.
\end{enumerate}
\end{Lem}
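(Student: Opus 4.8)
The plan is to handle the three parts in order, treating (i) and (ii) as measurability bookkeeping built on Lemma~\ref{lem: Fk} and the stopped-$\sigma$-algebra structure of $\cG^w_k=\cF_{\tau^b_k-1}$, and reserving the genuine probabilistic content for (iii). For (i), I would first recall that $\tau^b_k-1$ is a stopping time for $(\cF_j)$, so both $\tau^b_k-1$ and $\tau^b_k$ are $\cG^w_k$-measurable. To reach $(\sigma(i))_{i\le\tau^b_k}$ I would use the adaptedness $\sigma(j+1)\in\cF_j$ from Lemma~\ref{lem: Fk}\,{\it (i)}: the vector $W_j:=(\sigma(1),\dots,\sigma(j+1))$ is $\cF_j$-measurable, and the standard fact that an adapted process evaluated at a stopping time $T$ lies in $\cF_T$ (via $\{W_T\in B\}\cap\{T=j\}=\{W_j\in B\}\cap\{T=j\}\in\cF_j$) gives that $W_{\tau^b_k-1}=(\sigma(i))_{i\le\tau^b_k}$ is $\cG^w_k$-measurable.

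For (ii), the key point is that every listed object is a deterministic function of $(\sigma(i))_{i\le\tau^b_k}$ together with the weights of the edges incident to $\Sigma(\tau^b_k-1)$, all of which are $\cG^w_k$-measurable. Since $\tau^b_{k-1}\le\tau^b_k-1$, every edge incident to $\sigma^b(i)=\sigma(\tau^b_i)$ with $i\le k-1$ lies in $F^c_{\tau^b_k-1}$, so its weight, and hence whether it falls below $p$, is $\cG^w_k$-measurable; this makes $\cN(\sigma(\tau^b_{k-1}))$ and each $\cO^w_i$, $i\le k-1$, $\cG^w_k$-measurable. The sets $\Sigma^w(\tau^b_j)$, $j\le k$, are read off from $(\sigma(i))_{i\le\tau^b_k}$, so by the recursion $\cK^w_k=\cK^w_{k-1}\setminus(\cO^w_{k-1}\cup\Sigma^w(\tau^b_k))$ each $\cK^w_i$, $i\le k$, is $\cG^w_k$-measurable, and finally so is $G^w_k$, being built from $\Sigma^b(\tau^b_{k-1})$ and $\cK^w_k$.

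For (iii), I would first verify the crucial inclusion $G^w_k\subseteq F_{\tau^b_k-1}$. Here I would observe that $\Sigma^b(\tau^b_{k-1})=\Sigma^b(\tau^b_k-1)$, since the ranks strictly between $\tau^b_{k-1}$ and $\tau^b_k$ carry only white vertices, so the black endpoints match; and $\cK^w_k\subseteq V^w_n\setminus\Sigma^w(\tau^b_k)\subseteq V^w_n\setminus\Sigma^w(\tau^b_k-1)$, so the white endpoints match as well. Then I would transfer Lemma~\ref{lem: Fk}\,{\it (iii)} across the stopping time by decomposing on $\{\tau^b_k-1=j\}$: on this $\cF_j$-event $G^w_k$ coincides with a fixed $\cF_j$-measurable subset of $F_j$ by part (ii), and since conditionally on $\cF_j$ the family $\{U_e:e\in F_j\}$ is i.i.d.~uniform, the subcollection $\{U_e:e\in G^w_k\}$ is i.i.d.~uniform given $\cF_j$. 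Patching these identities against an arbitrary $A\in\cG^w_k$, written as the disjoint union of the pieces $A\cap\{\tau^b_k-1=j\}\in\cF_j$, then yields the claimed conditional i.i.d.~uniform law given $\cG^w_k$. The main obstacle is precisely this last transfer: because $G^w_k$ is a \emph{random} edge set whose identity is only revealed by $\cG^w_k$, the ``subcollection of i.i.d.~uniforms'' step cannot be run globally and must instead be carried out on each event $\{\tau^b_k-1=j\}$, where the set is frozen to a deterministic $\cF_j$-measurable value.
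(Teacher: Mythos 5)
Your proposal is correct, and its core mechanism coincides with the paper's: both rest on the stopped filtration $\cG^w_k=\cF_{\tau^b_k-1}$, the decomposition over the events $\{\tau^b_k-1=j\}$, the key inclusion $G^w_k\subseteq F_{\tau^b_k-1}$ (proved from the same two identities $\Sigma^b(\tau^b_{k-1})=\Sigma^b(\tau^b_k-1)$ and $\cK^w_k\subseteq V^w_n\setminus\Sigma^w(\tau^b_k)\subseteq V^w_n\setminus\Sigma^w(\tau^b_k-1)$), and an appeal to Lemma~\ref{lem: Fk}\,\emph{(iii)}. Where you genuinely differ is part \emph{(ii)}: the paper runs an induction on $k$, freezing $\tau^b_{k-1}-1=j'$ and $\tau^b_k-1=j$ and tracking measurability through the intermediate $\sigma$-algebra $\cF_{j'+1}$ (showing first that $\cN(\sigma(\tau^b_{k-1}))$ and $\cO^w_{k-1}$ are $\cF_{j'+1}$-measurable before passing to $\cF_j$), whereas you dispense with the induction via the single observation that every edge incident to $\sigma^b(i)$, $i\le k-1$, lies in $F^c_{\tau^b_k-1}$, so that all of $\cN(\sigma(\tau^b_{k-1}))$, $(\cO^w_i)_{i\in[k-1]}$, $(\cK^w_i)_{i\in[k]}$ and $G^w_k$ are deterministic functions of the stopped data $\big((\sigma(i))_{i\le\tau^b_k},\,\{U_e: e\in F^c_{\tau^b_k-1}\}\big)$, which is $\cG^w_k$-measurable by the adapted-process-at-a-stopping-time fact you already invoke in \emph{(i)}. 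Your version is more economical and makes transparent why no induction is needed (the recursion defining $\cK^w_i$ is purely deterministic once the stopped data are given); what the paper's inductive organisation buys is a family of intermediate statements (such as the $\cF_{j'+1}$-measurability of $\cO^w_{k-1}$) whose structure is reused almost verbatim in the proof of Lemma~\ref{lem: Gk2}. Your part \emph{(iii)} — including the explicit caveat that the ``subcollection of i.i.d.~uniforms'' step cannot be run globally, because $G^w_k$ is a random edge set, and must instead be performed on each event $\{\tau^b_k-1=j\}$ where it is frozen to an $\cF_j$-measurable value — is exactly the transfer the paper carries out, stated with if anything slightly more care.
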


\begin{proof}
We first note that since $\{\tau^b_k-1=j\}=\{\tau^b_k-1\le j\}\cap\{\tau^b_k-1>j-1\}$ and $\{\tau^b_k-1\le j\}=\cup_{i\le j}\{\tau^b_k-1=i\}$, $\cG^w_k$ is equivalently generated by the events $A\cap \{\tau^b_{k}-1 = j\}$ for all $A\in \cF_j, j\ge 0$. It then follows from Lemma~\ref{lem: Fk} that for all $j\ge 0$, all sets $B\subset \N^{j+1}$ and $B'\subset \N$, 
\[
\{(\sigma(i))_{i\le j+1}\in B\}\cap \{\tau^b_k=j+1\},\quad \{\tau^b_k\in B'\}\cap \{\tau^b_k=j+1\}
\]
are both $\cF_j$-measurable. Comparing this with the definition of $\cG^w_k$, we see that {\em (i)} holds. 

For {\em (ii)} and {\em (iii)}, we condition on the value of $\tau^b_k$ and argue 
by an induction on $k$. For the case $k=1$, we recall that $\tau^b_1\in\{1, 2\}$, as $\sigma(1)$ and $\sigma(2)$ must have opposite colours. In either case, we readily check that $\cK^w_1=V^w_n\setminus \Sigma^w(\tau^b_1-1)$ is $\cG^w_1$-measurable, and that $G^w_1=F_{\tau^b_1-1}$. The claim in {\em (iii)} then follows from Lemma~\ref{lem: Fk} {\em(iii)}.
For the induction step from $k-1$ to $k$, we can assume $\tau^b_{k-1}-1=j'\ge 0$ and $\tau^b_k-1=j>j'$, so that $\cG^w_{k-1}=\cF_{j'}$ and $\cG^w_k=\cF_j$. 
The proof of {\em (ii)} then reduces to showing that $\cN(\sigma(\tau^b_{k-1}))$, $\cO^w_{k-1}, \cK^w_k$ and $G^w_k$ are all $\cF_j$-measurable. 
To that end, we first note that the 1-neighbourhood $\cN(\sigma(\tau^b_{k-1}))$ is determined by the weights from the edge set
\[
\big\{e=\{\sigma(\tau^b_{k-1}), u\}: u\in V^w_n\}=\big\{e=\{\sigma(j'+1), u\}: u\in V^w_n\big\},
\]
which is a subset of $F^c_{j'+1}$. It follows that $\cN(\sigma(\tau^b_{k-1}))$ is $\cF_{j'+1}$-measurable. Since $\cK^w_{k-1}$ is $\cF_{j'+1}$-measurable by the induction assumption, we deduce that $\cO^w_{k-1}=\cN(\sigma(\tau^b_{k-1}))\cap \cK^w_{k-1}$ is also $\cF_{j'+1}$-measurable. As $\cF_{j'+1}\subseteq \cF_j$, this shows that both $\cN(\sigma(\tau^b_{k-1}))$ and $\cO^w_{k-1}$ are $\cF_j$-measurable. Since $\Sigma^w(\tau^b_k)=\Sigma^w(\tau^b_k-1)$, we also have that
\[
\cK^w_{k}=\cK^w_{k-1}\setminus\big(\cO^w_{k-1}\cup \Sigma^w(\tau^b_k-1)\big)=\cK^w_{k-1}\setminus\big(\cO^w_{k-1}\cup \Sigma^w(j)\big)
\]
is $\cF_j$-measurable. 
It is now also clear from its definition that $G_k^w$ is  $\cF_j$-measurable. This proves {\em (ii)}. For {\em (iii)}, since $\cK^w_k\subseteq V^w_n\setminus \Sigma^w(\tau^b_{k})=V^w_n\setminus \Sigma^w(\tau^b_{k}-1) $ and $\Sigma^b(\tau^b_{k-1})=\Sigma^b(\tau^b_k-1)$, we have 
\[
G^w_k\subseteq  \big\{e=\{u, v\}: u\in V^b_n\setminus\Sigma^b(\tau^b_k-1), v\in V^w_n\setminus\Sigma^w(\tau^b_k-1)\big\} = F_j.
\]
We have already seen that $G^w_k$ is $\cF_j$-measurable. 
The conclusion now follows from Lemma~\ref{lem: Fk}. 
\end{proof}

Since $\cO^w_k$ contains white vertices of ranks higher than $\tau^b_k$, for the exploration of $\cO^b_k$, we introduce the following filtration: for $k\in [n_b]$, let $\cG^b_k$ be the sigma-algebra generated by 
\[
\cG^w_k, \quad \cO^w_k \quad\text{and}\quad \big(\{U_e: e\in H_i\}\big)_{i\le k-1}. 
\]
where $H_i:=\{e=\{u, v\}: u\in V^b_n\setminus \Sigma^b(\tau^b_i) , v\in \cO^w_{i}\}$.
Note that $\cG^b_1\subseteq \cG^b_2\subseteq \cG^b_3\subseteq\cdots$. 
We also define
\begin{equation}
\label{def: Gbkset}
G^b_k = \big\{e=\{u, v\}: u\in V^b_n\setminus\Sigma^b(\tau^b_k), v\in \cK^w_k\big\}.
\end{equation}
\begin{Lem}
\label{lem: Gk2}
Let $p\in [0, 1]$. The following statements hold true for $G(n_b, n_w, p)$ and $k\in [n_b]$. 
\begin{enumerate}[(i)]
\item 
$(\cO^b_i)_{i\in [k-1]}$, $(\cK^b_i)_{i\in [k]}$ and $G^b_{k}$ are $\cG^b_k$-measurable;
\item 
conditional on $\cG^b_k$, $\{U_e: e\in G^b_k\}$ are i.i.d.~uniformly distributed on $(0, 1)$.
\end{enumerate}
\end{Lem}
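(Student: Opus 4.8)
The plan is to follow the template of the proof of Lemma~\ref{lem: Gk} almost verbatim, conditioning on the value of the stopping time $\tau^b_k-1$ and arguing by induction on $k$. Fix $j\ge 0$ and work on the event $\{\tau^b_k-1=j\}$, on which $\cG^w_k=\cF_j$ and $\sigma^b(k)=\sigma(\tau^b_k)=\sigma(j+1)$ is $\cF_j$-measurable by Lemma~\ref{lem: Fk}~{\it(i)}. The additional data defining $\cG^b_k$ on top of $\cG^w_k$ are $\cO^w_k$ and the weights on the sets $H_i$ for $i\le k-1$, and the whole argument hinges on tracking exactly which edges these involve.

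For {\it(i)}, the measurability of $G^b_k$ is immediate as in Lemma~\ref{lem: Gk}, since $G^b_k$ is a function of $\Sigma^b(\tau^b_k)$ and $\cK^w_k$, both $\cG^w_k$-measurable. The genuinely new point is the $\cG^b_k$-measurability of the $\cO^b_i$, which I would establish by induction on $i$ via the decomposition $\cO^b_i=\bigcup_{0\le \ell\le O^w_i}\cN_\ell$ of Lemma~\ref{lem: decomp}. For $1\le \ell\le O^w_i$, each $\cN_\ell$ is read off from the weights of edges $\{v_{i,\ell},u\}$ with $v_{i,\ell}\in\cO^w_i$ and $u\in\cK_{i,\ell}\subseteq\cK^b_i\subseteq V^b_n\setminus\Sigma^b(\tau^b_i)$; these lie in $H_i$, whose weights generate $\cG^b_k$. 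The term $\cN_0$, when nonempty, involves only the white lead vertex $\sigma(\tau^b_i-1)$, all of whose incident edges already lie in $F^c_{\tau^b_i-1}$ and are thus $\cG^w_i\subseteq\cG^b_k$-measurable. Combined with the inductive $\cG^b_k$-measurability of $\cK^b_i=V^b_n\setminus(\bigcup_{\ell<i}\cO^b_\ell\cup\Sigma^b(\tau^b_i))$, this yields {\it(i)}.

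For {\it(ii)}, I would first record the inclusion $G^b_k\subseteq F_j$: both endpoint constraints defining $G^b_k$ are more restrictive than those of $F_j$, since $\cK^w_k\subseteq V^w_n\setminus\Sigma^w(\tau^b_k)\subseteq V^w_n\setminus\Sigma^w(j)$ and $V^b_n\setminus\Sigma^b(\tau^b_k)\subseteq V^b_n\setminus\Sigma^b(j)$; moreover $G^b_k$ is $\cF_j$-measurable on $\{\tau^b_k-1=j\}$. The key observation is that, beyond $\cF_j$, the sigma-algebra $\cG^b_k$ is generated only by the weights of the $\cF_j$-measurable edge sets $E_1:=\{\sigma^b(k)\}\times\cK^w_k$ (which determine $\cO^w_k$) and $E_2:=\bigcup_{i\le k-1}H_i$, and I would verify that both are disjoint from $G^b_k$: the edges of $E_1$ share the black endpoint $\sigma^b(k)\in\Sigma^b(\tau^b_k)$, excluded from $G^b_k$, while the edges of $E_2$ have white endpoints in $\bigcup_{i\le k-1}\cO^w_i$, which is disjoint from $\cK^w_k$ because each $\cO^w_i$ is deleted from the pool when forming $\cK^w_{i+1}$. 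Since conditional on $\cF_j$ the family $\{U_e:e\in F_j\}$ is i.i.d.~uniform by Lemma~\ref{lem: Fk}~{\it(iii)}, its restriction to the disjoint $\cF_j$-measurable subset $G^b_k$ remains i.i.d.~uniform and independent of the weights on $E_1$ and $E_2\cap F_j$; averaging over $j$ then gives {\it(ii)}.

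The hard part will be the bookkeeping in part {\it(ii)}: one must account for \emph{every} edge whose weight enters $\cG^b_k$ and confirm it avoids $G^b_k$. The delicate case is the $\cN_0$ contribution to the sets $\cO^b_i$, whose edges emanate from a white lead vertex and are \emph{not} recorded in any $H_i$; recognising that they are instead already exposed in $\cG^w_i$ (because that lead vertex has rank $\tau^b_i-1\le j$) is precisely what leaves the weights on $G^b_k$ untouched and makes the conditional independence argument go through.
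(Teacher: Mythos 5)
Your proof is correct and essentially the paper's own: your part \emph{(ii)} matches it almost verbatim (the paper replaces each $H_i$ by $H_i'=H_i\cap F_{\tau^b_k-1}$, checks that $G^b_k$ is disjoint from these sets and from the edge set $I_k$ at $\sigma^b(k)$ while all three families lie in $F_{\tau^b_k-1}$, then applies Lemma~\ref{lem: Fk}~\emph{(iii)} exactly as you do), and your part \emph{(i)}, routed through the decomposition of Lemma~\ref{lem: decomp}, is equivalent to the paper's direct covering of the edges between $\cK^b_{k-1}$ and $\cN(\sigma^b(k-1))$ by $\bigcup_{i\le k-1}H_i\cup F^c_{\tau^b_k-1}$, via the same inclusion $\cN(\sigma^b(k-1))\subseteq\bigcup_{i\le k-1}\cO^w_i\cup\Sigma^w(\tau^b_{k-1})$. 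One cosmetic caveat: the individual sets $\cN_\ell$ depend on the Prim ranks of the vertices of $\cO^w_i$, which need not be $\cG^b_k$-measurable, but since $\cO^b_i$ only requires the ordering-free union $\bigl(\cN(\cO^w_i)\cap\cK^b_i\bigr)\cup\cN_0$, determined by the same edge weights, your argument is unaffected.
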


\begin{proof}
Recall that $\sigma^b(k)=\sigma(\tau^b_k)$ is the $k$th black vertex in Prim's ranking. 
For {\em (i)}, we argue by an induction on $k$. For $k=1$, $\cG^b_1$ is the sigma-algebra generated by $\cG^w_1$ and $\cO^w_1$. In this case, $(\cO^b_i)_{i\in [k-1]}$ is an empty sequence and $\cK^b_1=V^b_n\setminus\{\sigma^b(1)\}$. We have seen in Lemma~\ref{lem: Gk} that $\sigma^b(1)$ is $\cG^w_1$-measurable and therefore $\cG^b_1$-measurable. This shows that both sequences in {\em(i)} are $\cG^b_1$-measurable. Moreover, since $\cK^w_1$ is $\cG^w_1$-measurable and then $\cG^b_1$-measurable, we deduce that 
$G^b_1=\{\{u, v\}: u\ne \sigma^b(1), v\in \cK^w_1\}$ is also $\cG^b_1$-measurable. This proves {\em (i)} for $k=1$. For the induction step from $k-1$ to $k$, let us first argue that $\cO^b_{k-1}\in \cG^b_k$. Indeed, $\cO^b_{k-1}$ is determined by the weights from the edge set
\[
J_{k-1}:=\big\{e=\{u, v\}: u\in \cK^b_{k-1}, v\in \cN\big(\sigma^b(k-1)\big)\big\}.
\]
Let us argue that $J_{k-1}$ is a subset of the union of $\cup_{i\le k-1}H_i$ and $F^c_{\tau^b_{k}-1}$. Combined with both facts that $\cK^b_{k-1}\in \cG^b_k$ by the induction assumption and $\cN(\sigma^b(k-1))\in \cG^w_k$ by Lemma~\ref{lem: Gk}, this will imply that $\{U_e: e\in J_{k-1}\}$ is $\cG^b_k$-measurable, and therefore $\cO^b_{k-1}\in \cG^b_k$. Indeed, the definition of $\cK^w_k$ implies that 
$V^w_n\setminus \cK^w_{k-1}\subseteq \cup_{i\le k-2}\cO^w_i\cup \Sigma^w(\tau^b_{k-1})$. It follows that 
\[
\cN\big(\sigma^b(k-1)\big) \subseteq \big(\cN\big(\sigma^b(k-1)\big)\cap \cK^w_{k-1}\big)\cup (V^w_n\setminus \cK^w_{k-1}\big) \subseteq 
\cup_{i\le k-1}\cO^w_{i} \cup \Sigma^w(\tau^b_{k-1}).
\]
We deduce that
\begin{align*}
J_{k-1}&\subseteq \bigcup_{i\le k-1}\big\{\{u, v\}: u\in \cK^b_{k-1}, v\in \cO^w_i\big\}\bigcup \big\{\{u, v\}: u\in \cK^b_{k-1}, v\in \Sigma^w(\tau^b_{k-1})\big\}\\
&\subseteq \bigcup_{i\le k-1}H_i\, \bigcup \,F^c_{\tau^b_k-1},
\end{align*}
where we have used $\cK^b_{k-1}\subseteq V^b_n\setminus \Sigma^b(\tau^b_{k-1})$ and $\tau^b_{k-1}\le \tau^b_k-1$. 
This shows that $\cO^b_{k-1}$ is $\cG^b_k$-measurable. 
Since $\Sigma(\tau^b_k)$ is $\cG^w_k$-measurable and then $\cG^b_k$-measurable, it follows that $\cK^b_k=\cK^b_{k-1}\setminus (\cO^b_{k-1}\cup \Sigma^b(\tau^b_k))$ is $\cG^b_k$-measurable. As both $\Sigma^b(\tau^b_k)$ and $\cK^w_k$ are $\cG^b_k$-measurable, so is $G^b_k$. This proves {\em (i)}. 

For {\em (ii)}, let us denote
\[
H'_i=H_i\cap F_{\tau^b_k-1}=\big\{e=\{u, v\}: u\in V^b_n\setminus \Sigma^b(\tau^b_{k}-1), v\in \cO^w_i\setminus\Sigma^w(\tau^b_k-1)\big\},
\]
and note that $\cG^b_k$ is also the sigma-algebra generated by $\cG^w_k, \cO^w_k$ and $(\{U_e: e\in H'_i\})_{i\le k-1}$, since $\{U_e: e\in F^c_{\tau^b_k-1}\}\in \cG^w_k$. 
We point out that $\cO^w_k$ is a function of the edge weights from the set 
\[
I_k=\big\{e=\{\sigma^b(k), v\}: v\in \cK^w_k\big\}.
\]
Moreover, since $\sigma^b(k)$ and $\cK^w_k$ are both $\cG^w_k$-measurable according to Lemma~\ref{lem: Gk}, we see that $I_k$ is itself $\cG^w_k$-measurable. 
Similarly, we have $H'_i\in \cG^w_k$ for all $i\le k-1$. 
From the fact that $\cO^w_i\cap \cK^w_k=\varnothing$ for all $i\le k-1$ and $\sigma^b(k)\in \Sigma^b(\tau^b_k)$, we deduce that $G^b_k$ is disjoint from both $\cup_{i\le k-1}H'_{i}$ and $I_k$. We also note that $G^b_k, I_k$ and $H'_i, i\le k-1$ are all subsets of $F_{\tau^b_k-1}$. Indeed, the case for $G^b_k$ stems from the facts that $\Sigma^b(\tau^b_k - 1)\subseteq \Sigma^b(\tau^b_k)$ and $\cK^w_k\subseteq V^w_n\setminus \Sigma^w(\tau^b_k)$. The arguments for $I_k$ and $H'_i$ are easier. We then apply Lemma~\ref{lem: Fk} {\em (iii)} to find that conditional on $\cG^w_k=\cF_{\tau^b_k-1}$, $\{U_e: e\in G^b_k\}$ are i.i.d.~uniform and independent of both $\{U_e: e\in I_k\}$ and $\{U_e: e\in H'_i\}, i\le k-1$. It follows that conditional on $\cG^w_k, \cO^w_k$ and $\{U_e: e\in H'_i\}, i\le k-1$, we still have $\{U_e: e\in G^b_k\}$ as a collection of i.i.d.~uniform variables. The conclusion follows. 
\end{proof}

We recall that  a {\em lead} vertex is a vertex with the lowest rank in its connected component. For $\ell\in \N$, let $\zeta_{\ell}$ be the rank of the $\ell$th lead vertices in $(\sigma(i))_{i\in [n]}$, with the understanding that $\zeta_{\ell'}=\infty$ for all $\ell'>\ell$ if there are only $\ell$ connected components. Thanks to Proposition~\ref{prop: prim}, we know that $\sigma(j+1)$ is a lead vertex if and only if the Prim edge $e_{j}$ identified at step $j+1$ in Algorithm~\ref{algo: Prim} satisfies $U_{e_{j}}>p$.  
It follows that $\zeta_{\ell}-1$ is a stopping time, since 
\[
\{\zeta_{\ell} \le j+1\} = \big\{ \ell \le \big|\lbrace i\in [j]: U_{e_i}>p \rbrace\big|\big\},
\]
and is therefore $\cF_j$-measurable by Lemma~\ref{lem: Fk} {\it (ii)}. We then let $\cH_{\ell}:=\cF_{\zeta_{\ell}-1}$ be the sigma-algebra generated by the events $A\cap\{\zeta_{\ell}-1\le j\}$ for all $A\in \cF_j, j\ge 0$. 

\begin{Lem}
\label{lem: Hk}
For $\ell\in \N$, the following statements hold true: 
\begin{enumerate}[(i)]
\item 
both $(\sigma(i))_{i\le \zeta_{\ell}}$ and $\zeta_{\ell}$ are $\cH_{\ell}$-measurable;
\item 
conditional on $\cH_{\ell}$ and $\zeta_{\ell}<\infty$, $\{U_e: e\in F_{\zeta_{\ell}-1}\}$ are i.i.d.~uniformly distributed on $(0, 1)$. 
\end{enumerate}
\end{Lem}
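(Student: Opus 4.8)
The plan is to mirror the proof of Lemma~\ref{lem: Gk}, treating $\zeta_{\ell}-1$ as a stopping time for the filtration $(\cF_j)_{j\ge0}$ and reducing every statement to its fixed-time counterpart in Lemma~\ref{lem: Fk}. As a preliminary simplification, I would first record that, exactly as in the proof of Lemma~\ref{lem: Gk}, the sigma-algebra $\cH_{\ell}=\cF_{\zeta_{\ell}-1}$ is equivalently generated by the events $A\cap\{\zeta_{\ell}-1=j\}$ with $A\in\cF_j$, $j\ge0$; this follows from writing $\{\zeta_{\ell}-1=j\}=\{\zeta_{\ell}-1\le j\}\setminus\{\zeta_{\ell}-1\le j-1\}$ together with the stopping-time property $\{\zeta_{\ell}-1\le j\}\in\cF_j$ already established before the statement. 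It is also convenient to note at the outset that $\{\zeta_{\ell}=\infty\}=\big(\bigcup_{j\ge0}\{\zeta_{\ell}\le j+1\}\big)^c\in\cH_{\ell}$.

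For part (i), I would argue by decomposing on the value of $\zeta_{\ell}$. Fix $j\ge0$; on the event $\{\zeta_{\ell}=j+1\}=\{\zeta_{\ell}-1=j\}\in\cF_j$ we have $(\sigma(i))_{i\le\zeta_{\ell}}=(\sigma(i))_{i\le j+1}$. By Lemma~\ref{lem: Fk}~(i), $\sigma(i)\in\cF_{i-1}\subseteq\cF_j$ for every $i\le j+1$, so for any Borel set $B$ the event $\{(\sigma(i))_{i\le\zeta_{\ell}}\in B\}\cap\{\zeta_{\ell}=j+1\}=\{(\sigma(i))_{i\le j+1}\in B\}\cap\{\zeta_{\ell}=j+1\}$ lies in $\cF_j$. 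Comparing with the generating events of $\cH_{\ell}$ recorded above shows that $(\sigma(i))_{i\le\zeta_{\ell}}$ is $\cH_{\ell}$-measurable; the measurability of $\zeta_{\ell}$ itself is immediate, since $\{\zeta_{\ell}=j+1\}\in\cF_j$ and $\{\zeta_{\ell}=\infty\}\in\cH_{\ell}$ was noted above.

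For part (ii), the key is a strong-Markov-type decomposition over the value of the stopping time. On $\{\zeta_{\ell}=j+1\}$ we have $F_{\zeta_{\ell}-1}=F_j$, and Lemma~\ref{lem: Fk}~(iii) asserts that conditional on $\cF_j$ the family $\{U_e:e\in F_j\}$ is i.i.d.~uniform. Since $\{\zeta_{\ell}=j+1\}\in\cF_j$, for any $A'\in\cF_j$ and any bounded measurable test functional $\Psi$ one has $\mathbb E\big[\mathbf 1_{A'\cap\{\zeta_{\ell}=j+1\}}\Psi(\{U_e:e\in F_j\})\big]=\mathbb E\big[\mathbf 1_{A'\cap\{\zeta_{\ell}=j+1\}}\,\psi(|F_j|)\big]$, where $\psi(m)=\mathbb E\big[\Psi(\text{$m$ i.i.d.~uniforms})\big]$. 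Summing this identity over $j\ge0$ (on $\{\zeta_{\ell}<\infty\}$ exactly one value of $j$ occurs) and using that the events $A'\cap\{\zeta_{\ell}=j+1\}$ generate $\cH_{\ell}$ on $\{\zeta_{\ell}<\infty\}$, one obtains $\mathbb E\big[Z\,\mathbf 1_{\{\zeta_{\ell}<\infty\}}\Psi(\{U_e:e\in F_{\zeta_{\ell}-1}\})\big]=\mathbb E\big[Z\,\mathbf 1_{\{\zeta_{\ell}<\infty\}}\,\psi(|F_{\zeta_{\ell}-1}|)\big]$ for every bounded $\cH_{\ell}$-measurable $Z$, which is precisely the claimed conditional i.i.d.~uniform law given $\cH_{\ell}$ and $\zeta_{\ell}<\infty$.

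The main obstacle is purely bookkeeping: the index set $F_{\zeta_{\ell}-1}$ has a random cardinality, so one cannot invoke a fixed-dimensional statement directly. The device that resolves this--and the only genuinely non-routine point--is to freeze the stopping time on each event $\{\zeta_{\ell}=j+1\}\in\cF_j$, thereby replacing $F_{\zeta_{\ell}-1}$ by the set $F_j$ (deterministic given $\cF_j$) and reducing to Lemma~\ref{lem: Fk}. Once this decomposition is in place, the remaining steps are the same elementary manipulations used in the proof of Lemma~\ref{lem: Gk}.
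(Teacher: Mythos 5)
Your proof is correct and takes essentially the same route as the paper's: the paper likewise disposes of \emph{(i)} via the stopping-time property of $\zeta_{\ell}-1$ (mirroring Lemma~\ref{lem: Gk}~\emph{(i)}) and proves \emph{(ii)} by freezing on each event $\{\zeta_{\ell}=j+1\}$, applying Lemma~\ref{lem: Fk}~\emph{(iii)} there, and summing over $j$. The only difference is that you write out the bookkeeping the paper omits (the generating $\pi$-system for $\cH_{\ell}$, the test-functional identity $\psi(|F_j|)$, and the $\{\zeta_{\ell}=\infty\}$ edge case), which is exactly the detail hidden behind the paper's ``summing over $j$ then concludes the proof.''
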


\begin{proof}
The proof of {\em (i)} is similar to that of Lemma~\ref{lem: Gk} {\em (i)}, and stems from the fact that $\zeta_{\ell}-1$ is a stopping time. We omit the detail. For {\em (ii)}, it suffices to note that on the event $\{\zeta_\ell = j+1\}$, the claim follows from Lemma~\ref{lem: Fk}. Summing over $j$ then concludes the proof. 
\end{proof}

{\small
\setlength{\bibsep}{.2em}
\bibliographystyle{plain}
\bibliography{these}
}

\end{document}